\documentclass[a4paper]{amsart}

\usepackage[utf8]{inputenc}
\usepackage[british]{babel}
\usepackage{amsfonts,amsmath,amssymb,amsthm,mathtools,esint}
\usepackage{tikz}
\usepackage{enumitem}
\usepackage[hidelinks]{hyperref}
\usepackage{cleveref}
\crefname{subsection}{subsection}{subsections}
\usepackage{thmtools}
\numberwithin{equation}{section}
\usepackage{xcolor}
\definecolor{dark-gray}{gray}{0.3}
\usepackage{caption}

\newcommand{\R}{\mathbb{R}}
\renewcommand{\d}[1]{\,\mathrm{d}#1}

\newcommand{\I}{\mathrm{I}}
\newcommand{\J}{\mathrm{J}}
\newcommand{\G}{\mathrm{G}}

\newcommand{\Fcal}{\mathcal{F}}
\newcommand{\Tcal}{\mathcal{T}}

\newcommand{\Mcal}{\mathcal{M}}

\newcommand{\pw}{\mathrm{pw}}
\newcommand{\D}{\mathrm{D}}
\newcommand{\s}{\mathrm{s}}
\newcommand{\osc}{\mathrm{osc}}

\newtheorem{theorem}{Theorem}[section]
\newtheorem{lemma}[theorem]{Lemma}

\theoremstyle{remark}
\newtheorem{remark}[theorem]{Remark}

\newcounter{cntS}

\newcounter{cntL}

\usepackage[backend=biber, giveninits=true, isbn=false, doi=false, url=false, maxnames = 4]{biblatex}
\DeclareFieldFormat[article]{title}{{#1}} 
\DeclareFieldFormat[book]{title}{{#1}}
\DeclareFieldFormat[inproceedings]{title}{{#1}}
\DeclareFieldFormat[incollection]{title}{{#1}}
\DeclareFieldFormat{pages}{{#1}} 
\AtBeginBibliography{\footnotesize}
\begin{document}

\title[Polytopal FEM for biharmonic equation]{Quasi-optimal polytopal finite element methods for biharmonic equation}

\author[N.~T.~Tran]
{Ngoc Tien Tran}

\thanks{The author received funding from the European Union's Horizon 2020 research and innovation programme (project RandomMultiScales, grant agreement No.~865751).}

\address[N.~T.~Tran]{Universit\"at Augsburg, 86159 Augsburg, Germany}
\email{ngoc1.tran@uni-a.de}
\date{\today}

\keywords{biharmonic, quasi-optimal, discontinuous Galerkin, weak Galerkin, hybrid high-order, minimal regularity}

\subjclass{65N12, 65N15, 65N30}

\begin{abstract}
	This paper establishes quasi-optimal and lower-order error estimates for weak Galerkin, discontinuous Galerkin, and hybrid high-order finite element methods for the biharmonic equation under minimal regularity assumptions on general polytopal meshes. Furthermore, it is shown that the stabilization is an efficient contribution in a~posteriori error estimators.
\end{abstract}

\maketitle
\section{Introduction}
\subsection{Motivation}
Quasi-optimal error estimates arise from the Galerkin orthogonality of conforming finite element methods (FEM) in Cea's lemma \cite{Cea1964}, which is not accessible for nonconforming methods. Instead, we are in the setting of the second Strang lemma \cite{BergerScottStrang1972}. Bounding the consistency error therein often leads to additional regularity assumptions on the exact solution \cite{DiPietroErn2012}.

The medius analysis of \cite{Gudi2010} provides, up to data oscillation, quasi-optimal error estimates for classical nonconforming and discontinuous Galerkin (DG) FEM. If smoothing operators are involved in the definition of the discrete right-hand side, then these nonconforming FEM are quasi-optimal without data oscillation \cite{VeeserZanotti2019II,VeeserZanotti2018III}. Remarkably, quasi-optimality of FEM can be precisely characterized in an abstract setting \cite{VeeserZanotti2018I}. An extension of \cite{VeeserZanotti2018I} to hybrid high-order (HHO) methods for elliptic PDE of second and fourth order is provided in \cite{ErnZanotti2020,CarstensenKhotPani2023,CarstensenTran2025,LiangTran2026}. A crucial ingredient therein is the existence of a continuous right-inverse of the interpolation operator, leading to vanishing discrete consistency error.
This setting is quite restrictive, in particular, it excludes many popular nonconforming methods for the biharmonic problem, where the design of numerical schemes enjoys more flexibility with respect to degrees of freedom.

Beyond a~priori error control, the error analysis of \cite{VeeserZanotti2018III,ErnZanotti2020,CarstensenTran2025,LiangTran2026} also provide the efficiency of the stabilization of the discrete solution in the a~posteriori error analysis. While this stabilization can be avoided on simplicial meshes via Helmholtz decomposition \cite{BertrandCarstensenGraessleT2023,ChaumontFrelet26} or by pushing the stabilization parameter towards infinity \cite{KarakashianPascal2007}, it arises naturally as a contribution in error estimators on general polytopal meshes.

This paper is concerned with the a~priori error analysis of weak Galerkin (WG), DG, and HHO FEM for the biharmonic problem in two or three space dimensions under minimal regularity assumptions.
The popularity of
these methods ensues from their flexibility in discretization degree and mesh design, avoiding the high polynomial degrees of conforming methods.
However, apart from DG methods on simplicial meshes \cite{Gudi2010}, error bounds in the literature still requires higher regularity assumptions that may not be feasible in three space dimensions or in presence of mixed boundary conditions. Furthermore, reliable and efficient a~posteriori error estimators become much simpler to design with the efficiency of the stabilization established in this paper. 

\subsection{Main results}
Given a bounded polyhedral Lipschitz domain $\Omega \subset \mathbb{R}^d$, $d \in \{2,3\}$ and $f \in L^2(\Omega)$, the biharmonic problem (with clamped boundary conditions for the sake of brevity) seeks the solution $u \in V \coloneqq H^2_0(\Omega)$ to
\begin{align}\label{def:cont-problem}
	a(u,v) = F(v) \quad\text{for any } v \in V.
\end{align}
with $a(u,v) \coloneqq (\Delta u, \Delta v)_{L^2(\Omega)} = (\D^2 u, \D^2 v)_{L^2(\Omega)}$ and $F = (f, \bullet)_{L^2(\Omega)} \in V'$.
Discretizations of \eqref{def:cont-problem} seeks the solution $u_h$ in a given discrete ansatz space $V_h$ with
\begin{align}\label{def:dis-problem}
	a_h(u_h,v_h) = F_h(v_h) \quad\text{for any } v_h \in V_h,
\end{align}
where the bilinear form $a_h : V_h \times V_h \to \mathbb{R}$ and $F_h \in V_h'$ are suitable approximations of the continuous objects $a$ and $F$. We assume that $a_h$ is coercive w.r.t.~some discrete norm $\|\bullet\|_h$ of $V_h$, i.e.,
\begin{align}\label{ineq:coercivity}
	\|v_h\|_h^2 \lesssim a_h(v_h,v_h) \quad\text{for any } v_h \in V_h.
\end{align}
Given linear bounded operators $\I_h : V \to V_h$ (interpolation) and $\J_h : V_h \to V$ (smoothing), the error analysis can depart from the split
\begin{align}\label{ineq:err-split}
	\|e_h\|^2_h &\lesssim a_h(\I_h u - u_h, e_h)\nonumber\\
	&= a_h(\I_h u, e_h) - a(u, \J_h e_h) + F(\J_h e_h) - F_h(e_h).
\end{align}
with the abbreviation $e_h \coloneqq \I_h u - u_h \in V_h$.
The error $F(\J_h e_h) - F_h(e_h)$ in data approximation can be bounded using standard techniques and even vanishes if $F_h \coloneqq F \circ \J_h$. Thus, the difficulty in deriving quasi-optimal error estimates lies in the design of $\I_h$ and $\J_h$ so that the consistency error $a_h(\I_h u, e_h) - a(u, \J_h e_h)$ is quasi-optimal.

This paper suggests interpolation operators $\I_h$ based on the Galerkin (also known under the label of elliptic) projection. This is rather non-standard in hybridizable settings, where $L^2$ projections are employed for an interpolation onto cell variables.
While $L^2$ projections appear to be the canonical choice considering the right-inverse property, Galerkin projections enable quasi-optimality of Lehrenfeld-Sch\"oberl typed stabilizations on polytopal meshes.

Furthermore, we construct smoothing operators that are at least almost consistent with discrete test functions in the sense that the resulting error behaves quasi-optimal. These two ingredients are the driving force behind the analysis of this paper and, together with (relatively) standard techniques in medius analysis, we obtain quasi-optimal and lower-order error estimates for the WG method of \cite{MuWangYe2014}, the symmetric and nonsymmetric interior penalty DG methods \cite{MozolevskiSueli2003,GeorgoulisHouston2009}, and the HHO method of \cite{DongErn2022}. The results are of the form
\begin{align}\label{ineq:main_results}
	\|\D^2_\pw(u - \hat{u}_h)\| + \|\I_h u - u_h\|_h + |u_h|_\s &\lesssim \min_{\phi \in P_k(\Mcal)} \|\D^2_\pw(u - \phi)\| + \osc(f,\Mcal),\nonumber\\
	\|u - \hat{u}_h\| &\lesssim h^\delta\big(\min_{\phi \in P_k(\Mcal)} \|\D^2_\pw(u - \phi)\| + \osc(f,\Mcal)\big)
\end{align}
with a discrete approximation $\hat{u}_h$ of $u$ in the space $P_k(\Mcal)$ of piecewise polynomials of degree at most $k$ w.r.t.~some mesh $\Mcal$, data oscillation $\osc(f,\Mcal)$, and positive parameter $\delta$ depending on the elliptic regularity of the problem.
Here and throughout, $\|\bullet\|$ denotes the $L^2$ norm over $\Omega$.
In particular, the stabilization is an efficient contribution in a~posteriori error estimators, i.e.,
$$|u_h|_\s \lesssim \|\D^2_\pw(u - \hat{u}_h)\| + \osc(f,\Mcal).$$ 
If $F_h \coloneqq F \circ \J_h$, then the data oscillations in \eqref{ineq:main_results} can be omitted. Note that this right-hand side is also well-defined for any $F \in V'$.

We emphasize that \eqref{ineq:main_results} holds without additional regularity assumptions and on general polytopal meshes. Furthermore, it is expected that the analysis of this paper extends beyond the presented numerical schemes.

\subsection{Outline}
The remaining parts of this paper are organized as follows.
\Cref{sec:smoothing} constructs conforming approximations of discrete functions in a general setting, which lead, in the following sections, to suitable smoothing operators. Quasi-optimal error estimates for the WG method of \cite{MuWangYe2014} are established in \Cref{sec:WG}, followed by the a~priori error analysis of DG methods in \Cref{sec:DG}.
Corresponding results for the HHO method of \cite{DongErn2022} are provided in \Cref{sec:HHO}.

\subsection{Notation}
Throughout this paper, standard notation for Lebesgue and Sobolev spaces are employed. The $L^2$ norm over $\omega$ is abbreviated by $\|\bullet\|_\omega$ with the convention $\|\bullet\| \coloneqq \|\bullet\|_\Omega$ and $(\bullet, \bullet)_{L^2(\omega)}$ is the $L^2$ scalar product.

\section{Smoothing operator}\label{sec:smoothing}
Before the construction of conforming approximations is presented, we provide details on the polytopal mesh and fix the notation of finite element spaces.

\subsection{Polytopal meshes}
Let $\mathcal{M}$ be a finite collection of closed polytopes of positive volume with overlap of measure zero that covers $\overline{\Omega} = \cup_{K \in \mathcal{M}} K$.
A face $S$ of the mesh $\mathcal{M}$ is a closed connected subset of a hyperplane $H_S$ with positive $(d-1)$-dimensional surface measure such that either (a) there exist $K_+,K_- \in \mathcal{M}$ with $S \subset H_S \cap K_+ \cap K_-$ (interior face) or (b) there exists $K_+ \in \mathcal{M}$ with $S \subset H_S \cap K_+ \cap \partial \Omega$ (boundary face).

Let $\Sigma$ be a finite collection of faces with overlap of $(d-1)$-dimensional surface measure zero that covers the skeleton $\partial \Mcal \coloneqq \cup_{K \in \mathcal{M}} \partial K = \cup_{S \in \Sigma} S$ with the split $\Sigma = \Sigma(\Omega) \cup \Sigma(\partial \Omega)$ into the set of interior faces $\Sigma(\Omega)$ and the set of boundary faces $\Sigma(\partial \Omega)$.
For $K \in \mathcal{M}$, $\Sigma(K)$ is the set of all faces of $K$.
The normal vector $\nu_S$ of an interior face $S \in \Sigma(\Omega)$ is fixed in its orientation beforehand and set $\nu_S \coloneqq \nu|_S$ for boundary faces $S \in \Sigma(\partial \Omega)$.
For $S \in \Sigma(\Omega)$, $K_+ \in \Mcal$ (resp.~$K_- \in \Mcal$) denotes the unique cell with $S \subset \partial K_{+}$ (resp.~$S \subset \partial K_-$) and $\nu_{K_+}|_S = \nu_S$ (resp.~$\nu_{K_-}|_S = -\nu_S$).
For $S \in \Sigma(\partial \Omega)$, $K_+ \in \Mcal$ is the unique cell with $S \subset \partial K_+$.
The jump $[v]_S$ and the average $\{v\}_S$ of any function $v \in W^{1,1}(\mathrm{int}(T_+ \cup T_-))$ along $S \in \Sigma(\Omega)$ are defined by $[v]_S \coloneqq v|_{K_+} - v|_{K_-} \in L^1(S)$ and $\{v\}_S \coloneqq (v|_{K_+} + v|_{K_-})/2 \in L^1(S)$.
If $S \in \Sigma(\partial \Omega)$, then $[v]_S$ and $\{v\}_S$ denote the trace of $v$.

For a piecewise Sobolev function $v \in H^1(\Mcal)$, i.e., $v \in H^1(\mathrm{int}(K))$ for any $K \in \Mcal$, $[v]_\Sigma$ and $\{v\}_\Sigma$ are functions in $L^2(\partial \Mcal)$ defined by $[v]_\Sigma|_S = [v]_S$ and $\{v\}_\Sigma|_S = \{v\}_S$ for any $S \in \Sigma$. 

Differential operators with subscript $\pw$ denote the piecewise version without explicit reference to the underlying mesh.

For theoretical purposes, let $\vartheta$ denote the mesh regularity parameter of $\Mcal$ associated with a matching simplicial submesh $\mathcal{T}$ and the set $\mathcal{F}$ of faces of $\mathcal{T}$, we refer to \cite[Definition 1.38]{DiPietroErn2012} for a detailed definition ($\vartheta$ is the minimum of the two parameters therein). The constants in inverse estimates and trace inequalities depend on $\vartheta$.


\subsection{Finite element spaces}\label{sec:fem-spaces}
Given a subset $M \subset \R^d$ of diameter $h_M$, let $P_k(M)$ denote the space of polynomials of degree at most $k$;
$P_k(\mathcal{M})$ and $P_k(\Sigma)$ are 
the space of piecewise polynomials of degree at most $k$ with respect to the mesh $\mathcal{M}$ and the faces $\Sigma$.
The set $P_{k}(\Sigma(\Omega))$ contains all functions $v_\Sigma \in P_k(\Sigma)$ that vanish on the boundary, namely, $v_\Sigma|_S = 0$ for any $S \in \Sigma(\partial \Omega)$.

For any $v \in L^1(M)$, we denote the $L^2$ projection of $v$ onto $P_k(M)$ by $\Pi_M^k v \in P_k(M)$; $\Pi_\Mcal^k$ and $\Pi_\Sigma^k$ are $L^2$ projections onto $P_k(\Mcal)$ and $P_k(\Sigma)$.

The piecewise constant function $h_\Mcal \in P_0(\Mcal)$ reads $h_\Mcal|_K = h_K = \mathrm{diam}(K)$ and $h_{\max} \coloneqq \max_{K \in \Mcal} h_K$ is the maximal mesh-size of $\Mcal$. Furthermore, we define the piecewise constant function $\nu_\Sigma \in P_0(\Sigma)$ by $\nu_\Sigma|_S = \nu_S$. 

\subsection{Construction of smoothing function}
Let $W_h \coloneqq P_{\ell}(\Mcal) \times P_m(\Sigma(\Omega)) \times P_n(\Sigma(\Omega))$ with $\ell, m, n \geq 0$.

\begin{theorem}[smoothing]\label{thm:smoothing}
	Given $w \in P_k(\Mcal)$ and $(w_\Mcal, w_\Sigma, \delta_\Sigma) \in W_h$, $k \geq 2$, there exists a function $v \in V$ with the following properties.
	\begin{enumerate}[wide]
		\item[(a)] (consistent weights) $\Pi_\Mcal^\ell v = w_\Mcal$, $\Pi_\Sigma^m v = w_\Sigma$, $\Pi_\Sigma^n \nabla v \cdot \nu_\Sigma = \delta_\Sigma$.
		\item[(b)] (local approximation property) For all $K \in \Mcal$,
		\begin{align*}
			&h_K^{-4}\|v - w\|_K^2 + h_K^{-2}\|\nabla(v - w)\|_K^2 + \|\D^2(v - w)\|_K^2\\
			&\quad\lesssim \sum_{S \in \Sigma, S \cap K \neq \emptyset} (h_S^{-3}\|[w]_S\|^2_S + h_S^{-1}\|[\nabla_\pw w]_S \cdot \nu_S\|^2_S) + h_K^{-4}\|\Pi_K^\ell(w - w_\Mcal)\|_K^2\\
			&\quad\qquad + \sum_{S \in \Sigma(K)} \big(h_S^{-3}\|\Pi_S^m(w - w_\Sigma)\|_S^2 + h_S^{-1}\|\Pi_S^n(\nabla_\pw w \cdot \nu_S - \delta_\Sigma)\|_S^2\big).
		\end{align*}
	\end{enumerate}
\end{theorem}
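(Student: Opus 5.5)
We build $v$ in two steps: first an $H^2_0$-conforming companion $v_0=E w$ of $w$, then local $H^2_0$ bubble corrections that fix the weights in (a) without spoiling (b).

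\emph{Step 1 (conforming companion).} On the matching simplicial submesh $\Tcal$ of $\Mcal$, choose an $H^2$-conforming finite element space, e.g.\ a Hsieh--Clough--Tocher macro element for $d=2$ (or its Worsey--Farin analogue for $d=3$) of sufficiently high degree, with homogeneous boundary degrees of freedom. Define $v_0 \coloneqq E w$ by averaging the nodal degrees of freedom of $w|_K$ over all cells sharing the node, and setting boundary degrees of freedom to zero. The standard enrichment estimate, via scaling and equivalence of norms on the finite-dimensional local spaces, gives
\begin{align*}
h_K^{-4}\|v_0-w\|_K^2+h_K^{-2}\|\nabla(v_0-w)\|_K^2+\|\D^2(v_0-w)\|_K^2 \lesssim \sum_{S\cap K\neq\emptyset}\big(h_S^{-3}\|[w]_S\|_S^2+h_S^{-1}\|[\nabla_\pw w]_S\cdot\nu_S\|_S^2\big).
\end{align*}
Here the tangential gradient jumps are controlled by $[w]_S$ via an inverse estimate on $S$. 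The faces of $\Mcal$ are unions of faces in $\Fcal$, and jumps across faces of $\Tcal$ interior to a cell $K$ vanish because $w|_K$ is a polynomial. Hence only the faces in $\Sigma$ appear, with $h_S\approx h_F$ by mesh regularity $\vartheta$.

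\emph{Step 2 (bubble corrections).} For every interior face $S\in\Sigma(\Omega)$ choose a bubble $b_S\in H^2_0(\omega_S)$ supported on the two simplices of $\Tcal$ adjacent to a face of $\Fcal$ in $S$ (or a union of such), e.g.\ a squared product of barycentric coordinates. Use two families $\{b_S q\}$ and $\{b_S\tilde b_S q\}$, where $\tilde b_S$ is a linear function vanishing on $H_S$. The first family controls the face moments of $v$ and has vanishing normal derivative moments on $S$. The second has vanishing trace on $S$ and allows us to prescribe $\Pi_S^n\nabla v\cdot\nu_S$. By a dimension and scaling argument, there is a unique correction $c_S$ in the span of these families with
\begin{align*}
\Pi_S^m c_S=w_\Sigma-\Pi_S^m v_0,\qquad \Pi_S^n\nabla c_S\cdot\nu_S=\delta_\Sigma-\Pi_S^n\nabla v_0\cdot\nu_S,
\end{align*}
and it satisfies $\|\D^2 c_S\|\lesssim h_S^{-3/2}\|\Pi_S^m(w_\Sigma-v_0)\|_S+h_S^{-1/2}\|\Pi_S^n(\delta_\Sigma-\nabla v_0\cdot\nu_S)\|_S$, together with the corresponding scaled $L^2$ and $H^1$ bounds. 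The functions $c_S$ also disturb cell moments, so we correct the cell moments last. For each $K$, add an interior bubble $c_K=b_K^2 q_K\in H^2_0(\mathrm{int}K)$, built on the submesh so that it has zero trace and normal derivative on all faces, with $\Pi_K^\ell c_K=w_\Mcal-\Pi_K^\ell(v_0+\sum_S c_S)$. Since $c_K$ vanishes with its gradient on $\partial K$, it does not affect the face weights. Setting $v \coloneqq v_0+\sum_S c_S+\sum_K c_K$ then yields (a).

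\emph{Step 3 (estimates).} For (b), write $\Pi_S^m(w_\Sigma-v_0)=\Pi_S^m(w_\Sigma-w)+\Pi_S^m(w-v_0)$. Bound the second term by the trace inequality $h_S^{-3}\|w-v_0\|_S^2\lesssim h_K^{-4}\|w-v_0\|_K^2+h_K^{-2}\|\nabla(w-v_0)\|_K^2$ and Step 1, using that $v_0-w$ is a piecewise polynomial on $\Tcal$ so that inverse estimates apply. Treat the normal-derivative term and the cell term $\Pi_K^\ell(w_\Mcal-w)+\Pi_K^\ell(w-v_0-\sum c_S)$ in the same way. Finite overlap of the supports then gives (b).

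The main obstacle is Step 2: constructing bubbles on general polytopes that are unisolvent for the prescribed face moments (degree $m$) and normal-derivative moments (degree $n$), with scaling-uniform stability. A face $S$ may consist of many submesh faces, and the moments are taken over the whole of $S$. I would first try taking a single submesh face $F\subset S$ with $|F|\approx|S|$, which exists by mesh regularity, and using polynomial bubbles there. Norm equivalence on $P_m(S)$ between $\|\cdot\|_S$ and $\|\cdot\|_F$ then gives stable invertibility of the moment map.
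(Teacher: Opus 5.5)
Your Step 1 is the paper's Step 1. Your Step 2 is a genuinely different route.

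\textbf{Comparison with the paper's Step 2.} The paper builds no bubbles by hand. It takes Zhang's $C^1$ element of degree $N=\max\{\ell+8,m+9,n+7\}$ on $\Tcal$, whose integral degrees of freedom include:
\begin{itemize}
\item cell moments against $P_{N-8}(T)$ on every $T\in\Tcal$;
\item face moments against $P_{N-9}(F)$ on every interior $F\in\Fcal$;
\item normal-derivative moments against $P_{N-7}(F)$ on every interior $F\in\Fcal$.
\end{itemize}
It prescribes exactly these as the defects of $v_0$ (zero on subfaces off the skeleton) and sets all remaining degrees of freedom to zero. Conformity, the decoupling of the three kinds of weights, and stability (norm equivalence on the reference element) are therefore inherited from unisolvence, and all weights are fixed at once. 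In addition, $v$ stays a piecewise polynomial on a shape-regular simplicial partition, which the subsequent remark on inverse estimates relies on.

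Your construction avoids a degree-$\geq 9$ $C^1$ element in 3D and is self-contained. The price is an ordering (faces before cells, since face bubbles disturb cell moments) and enforcing the $S$-moments through a single subface $F\subset S$ with $|F|\approx|S|$. That norm-equivalence argument, the block-triangular solvability for $c_S$, and your Step 3 bookkeeping (only $S\in\Sigma(K)$ contribute on $K$; trace inequalities on subsimplices of $K$) are sound.

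\textbf{The face bubble you suggest is not in $H^2$.} Take $b_F=\prod_j\lambda_j$ over the vertices of $F$, taken piecewise on $T_+\cup T_-$, and square it. Its trace on $F$ is single valued. However, the barycentric coordinates of $T_+$ and $T_-$ agree on $F$ but their normal derivatives do not, so $\partial_\nu b_F$ jumps across $F$. For a mirror-symmetric pair the one-sided normal derivatives are exactly opposite and not identically zero, since $b_F$ has a ridge along $F$. Hence $\partial_\nu(b_F^2)=2b_F\,\partial_\nu b_F$ jumps as well, and your $v$ would not lie in $V$.

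\textbf{A valid replacement.} Let $x_F$ be the incentre of $F$ and let $t>0$ depend only on shape regularity. Define the bipyramid $B_F\coloneqq\mathrm{conv}(F\cup\{x_F\pm t h_F\nu_S\})\subset T_+\cup T_-$, and set $b_S\coloneqq\prod_j\ell_j^2$, extended by zero, where the $\ell_j$ are the affine functions of the $2d$ lateral facets of $B_F$. Then:
\begin{itemize}
\item $B_F$ is convex, and $b_S$ is a single polynomial vanishing to second order on $\partial B_F$ and positive inside, so $b_S\in H^2_0(B_F)$.
\item $b_S$ is even under reflection through $H_S$, so $\partial_\nu(b_S q)=0$ on $F$ for $q$ extended constantly in the normal direction.
\end{itemize}
This symmetry is also what makes your claim that the first family has vanishing normal-derivative moments true. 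For a general bubble that claim is false, but it is not needed: the second family has zero trace on $S$, so the moment system is block triangular anyway.

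Alternatively, use face-interior basis functions of a $C^1$ element on $\Tcal$. That brings you essentially back to the paper's argument.
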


\begin{proof}
	Since the two-dimensional case is simpler, the proof below only focuses on the three dimensional case.
	In the first step, we average $w$ at all degrees of freedom associated with higher-order $C^1$ conforming finite element functions on the Worsey–Farin splits \cite{WorseyFarin1987} of the matching simplicial submesh $\Tcal$. 
	These degrees of freedom only depend on the evaluation of the function $w$ and its first derivative \cite{GeorgoulisHoustonVirtanen2011,GuzmanLischkeNeilan2022}.
	Standard averaging techniques \cite{BrennerGudiSung2010,GeorgoulisHoustonVirtanen2011} lead to $v_0 \in V$ with
	\begin{align*}
		&h_T^{-4}\|w - v_0\|_T^2 + h_T^{-2}\|\nabla(w - v_0)\|_T^2 + \|\D^2(w - v_0)\|_T^2\\
		&\qquad\qquad\lesssim \sum_{F \in \Fcal, F \cap T \neq \emptyset} \big(h_F^{-3} \|[w]_F\|^2_F + h_F^{-1} \|[\nabla_\pw w]_F \cdot \nu_F\|^2_F\big).
	\end{align*}
	For any $K \in \Mcal$, summing over all $T \in \Tcal$ with $T \subset K$ and $h_T \approx h_K$ shows
	\begin{align}\label{ineq:pr-local-smoothing-err-1}
		&h_K^{-4}\|w - v_0\|_K^2 + h_K^{-2}\|\nabla(w - v_0)\|_K^2 + \|\D^2(w - v_0)\|_K^2\nonumber\\
		&\qquad\qquad\lesssim \sum_{S \in \Sigma, S \cap K \neq \emptyset} \big(h_S^{-3} \|[w]_S\|^2_S + h_S^{-1} \|[\nabla_\pw w]_S \cdot \nu_S\|^2_S\big),
	\end{align}
	where we utilize that $w$ and $\nabla_\pw w$ only jump across faces $F \in \Fcal$ on the skeleton $\partial \Mcal$. To establish (a), we add corrector functions in $C^1$ conforming finite element spaces, whose degrees of freedom are known from \cite{Zhang2009} and can be written in integral form \cite[Appendix A.2]{LiangTran2026}. We prescribe the unique function $v_1 \in P_N(\Tcal) \cap V$ with $N = \max\{\ell+8,m+9,n+7\}$ by setting the degrees of freedom (5)--(7)
	in \cite[Appendix A.2]{LiangTran2026} via
	\begin{align*}
		0 &= \int_T (v_1 - \Pi_\Mcal^\ell(v_0 - w_\Mcal)) p \d{x} = \int_F (v_1 - \Pi_\Sigma^m(v_0 - w_\Sigma)|_F) q \d{s}\\
		&= \int_F (\nabla v_1 \cdot \nu_F - \Pi_\Sigma^n(\nabla v_0 \cdot \nu_\Sigma - \delta_\Sigma)|_F) r \d{s} 
	\end{align*}
	for any $T \in \Tcal$, $F \in \Fcal(\Omega)$, $p \in P_{N-8}(T)$, $q \in P_{N-9}(F)$, and $r \in P_{N-7}(F)$,
	while the remaining ones (1)--(4) are set to zero.
	Here, the restriction of an $L^2$ function on the skeleton $\partial \Mcal$ to a face $F$ with $F \not\subset \partial \Mcal$ is defined as zero.
	By equivalence of norms in finite dimensional spaces and inverse estimates,
	\begin{align*}
		&h_T^{-4}\|v_1\|_T^2 + h_T^{-2}\|\nabla v_1\|_T^2 + \|\D^2 v_1\|_T^2 \lesssim h_T^{-4}\|\Pi_\Mcal^\ell(v_0 - w_\Mcal)\|_T^2\\
		&\quad+ \sum_{F \in \Fcal(T)} \big(h_F^{-3}\|\Pi_\Sigma^m(v_0 - w_\Sigma)|_F\|_F^2 + h_F^{-1}\|\Pi_\Sigma^n(\nabla v_0 \cdot \nu_\Sigma - \delta_\Sigma)|_F\|_F^2\big)
	\end{align*}
	holds for any $T \in \Tcal$. The sum of this over $T \in \Tcal$, $T \subset K$ with $h_T \approx h_K$ implies
	\begin{align}\label{ineq:pr-local-smoothing-err-2}
		&h_K^{-4}\|v_1\|_K^2 + h_K^{-2}\|\nabla v_1\|_K^2 + \|\D^2 v_1\|_K^2\lesssim h_K^{-4}\|\Pi_\Mcal^\ell(v_0 - w_\Mcal)\|_K^2\nonumber\\
		&\qquad \lesssim \sum_{S \in \Sigma(K)} \big(h_S^{-3}\|\Pi_\Sigma^m(v_0 - w_\Sigma)\|_S^2 + h_S^{-1}\|\Pi_\Sigma^n(\nabla v_0 \cdot \nu_\Sigma - \delta_\Sigma)\|_S^2\big).
	\end{align}
	The function $v \coloneqq v_0 + v_1$ satisfies (a) by design and (b) from \eqref{ineq:pr-local-smoothing-err-1}--\eqref{ineq:pr-local-smoothing-err-2} and the triangle inequality.
\end{proof}
\begin{remark}[linearity]
	The mapping $(w, w_\Mcal, w_\Sigma, \delta_\Sigma) \mapsto v$ is linear by construction of $v$. Furthermore, $v$ is a piecewise polynomial w.r.t.~the Worsey-Farin splits of $\Tcal$ and, thus, satisfies inverse estimates.
\end{remark}

\section{Weak Galerkin}\label{sec:WG}
This section derives quasi-optimal error estimates for the WG FEM of \cite{MuWangYe2014}.

\subsection{Discrete problem}
We consider the ansatz space $V_h \coloneqq P_k(\Mcal) \times P_k(\Sigma(\Omega)) \times P_{k-1}(\Sigma(\Omega))$ with $k \geq 2$.
Given $v_h = (v_\Mcal, v_\Sigma, \gamma_\Sigma) \in V_h$, the discrete Laplacian $\Delta_h v_h \in P_{k-2}(\Mcal)$ of $v_h$ is the unique solution to
\begin{align}\label{def:discrete-Laplace-WG}
	&(\Delta_h v_h, \phi)_{L^2(\Omega)}\nonumber\\
	&\qquad = (v_\Mcal, \Delta_\pw \phi)_{L^2(\Omega)} + \sum_{S \in \Sigma(\Omega)} \int_S \big(\gamma_\Sigma [\phi]_S - v_\Sigma [\nabla_\pw \phi]_S \cdot \nu_S \big) \d{s}
\end{align}
for any $\phi \in P_{k-2}(\Mcal)$. The discrete problem seeks the discrete solution $u_h = (u_\Mcal, u_\Sigma, \beta_\Sigma) \in V_h$ to \eqref{def:dis-problem} with
\begin{align*}
	a_h(u_h,v_h) &\coloneqq (\Delta_h u_h, \Delta_h v_h)_{L^2(\Omega)} + \s_h(u_h, v_h),\\
	\s_h(u_h,v_h) &\coloneqq \sum_{K \in \Mcal} \sum_{S \in \Sigma(K)} \big(h_S^{-3} (u_K - u_\Sigma, v_K - v_\Sigma)_{L^2(S)}\\
	&\qquad\qquad + h_S^{-1} (\nabla u_K \cdot \nu_S - \beta_\Sigma, \nabla v_K \cdot \nu_S - \gamma_\Sigma)_{L^2(S)}\big),\\
	F(v_h) &\coloneqq (f, v_\Mcal)_{L^2(\Omega)}
\end{align*}
for $u_h = (u_\Mcal, u_\Sigma, \beta_\Sigma), v_h = (v_\Mcal, v_\Sigma, \gamma_\Sigma) \in V_h$.
Here, we use the convention $u_K = u_\Mcal|_K$ for any $K \in \Mcal$.
Note that $a_h$ defines a scalar product in $V_h$ \cite{MuWangYe2014}.
Let $\|\bullet\|_h$ denote the norm induced by $a_h$ and $|\bullet|_\s \coloneqq \sqrt{\s_h(\bullet,\bullet)}$. In particular, \eqref{ineq:coercivity} holds with equality.

\begin{lemma}[equivalence of norms]\label{lem:equiv-norm-WG}
	Any $v_h = (v_\Mcal, v_\Sigma, \gamma_\Sigma) \in V_h$ satisfies
	\begin{align*}
		\|\Delta_h v_h - \Delta_\pw v_\Mcal\| \lesssim |v_h|_\s.
	\end{align*}
	In particular, the norms $\|\bullet\|_h$ and $\big(\|\Delta_\pw v_\Mcal\|^2 + |v_h|_\s^2\big)^{1/2}$ are equivalent.
\end{lemma}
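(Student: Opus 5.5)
The plan is to test the definition \eqref{def:discrete-Laplace-WG} with $\phi \coloneqq \Delta_h v_h - \Delta_\pw v_\Mcal \in P_{k-2}(\Mcal)$ and compare it with the elementwise Green formula for $\Delta_\pw v_\Mcal$. For each $K \in \Mcal$, integrating by parts twice gives
\begin{align*}
(\Delta v_K, \phi)_{L^2(K)} = (v_K, \Delta \phi)_{L^2(K)} + \sum_{S \in \Sigma(K)} \int_S \big(\nabla v_K \cdot \nu_K\, \phi - v_K\, \nabla \phi \cdot \nu_K\big) \d{s}.
\end{align*}
Summing over $K$ and subtracting from \eqref{def:discrete-Laplace-WG} cancels the volume terms $(v_\Mcal, \Delta_\pw\phi)_{L^2(\Omega)}$. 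Rewrite the face terms of \eqref{def:discrete-Laplace-WG} elementwise: since $v_\Sigma$ and $\gamma_\Sigma$ are single valued on interior faces and vanish on boundary faces, $\sum_{S\in\Sigma(\Omega)} \int_S \gamma_\Sigma [\phi]_S \d{s} = \sum_K \sum_{S \in \Sigma(K)} \int_S \gamma_\Sigma (\nu_K\cdot\nu_S) \phi|_K \d{s}$, and similarly for $v_\Sigma[\nabla_\pw\phi]_S\cdot\nu_S$. Note the orientation sign $\nu_K\cdot\nu_S = \pm 1$ attached to $\gamma_\Sigma$, which is consistent with the stabilization pairing $\nabla v_K\cdot\nu_S - \gamma_\Sigma$. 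This yields
\begin{align*}
\|\phi\|^2 = \sum_{K \in \Mcal} \sum_{S \in \Sigma(K)} \int_S \Big( (\nu_K\cdot\nu_S)\big(\gamma_\Sigma - \nabla v_K \cdot \nu_S\big)\phi|_K - (\nu_K\cdot\nu_S)\big(v_\Sigma - v_K\big)\nabla\phi|_K \cdot \nu_S \Big) \d{s}.
\end{align*}
Boundary faces fit the same formula with $v_\Sigma = \gamma_\Sigma = 0$, which is consistent with the stabilization summing over all $S \in \Sigma(K)$.

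Next, apply Cauchy--Schwarz on each face with the weights $h_S^{-1/2}, h_S^{1/2}$ and $h_S^{-3/2}, h_S^{3/2}$. The discrete trace inequality for polynomials, with constants depending on $\vartheta$ through the simplicial submesh $\Tcal$, gives $h_S^{1/2}\|\phi|_K\|_S \lesssim \|\phi\|_K$ and $h_S^{3/2}\|\nabla\phi|_K\|_S \lesssim h_S\|\nabla \phi\|_K \lesssim \|\phi\|_K$, using the inverse estimate and $h_S \le h_K$. Summing over $K$ and $S\in\Sigma(K)$, where the number of faces per cell is bounded by mesh regularity, yields $\|\phi\|^2 \lesssim |v_h|_\s \|\phi\|$, which is the first claim.

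For the norm equivalence, $\|v_h\|_h^2 = \|\Delta_h v_h\|^2 + |v_h|_\s^2$, and the triangle inequality together with the first claim gives both $\|\Delta_h v_h\| \le \|\Delta_\pw v_\Mcal\| + C|v_h|_\s$ and $\|\Delta_\pw v_\Mcal\| \le \|\Delta_h v_h\| + C|v_h|_\s$. Both directions of the equivalence follow at once.

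The only delicate point is the bookkeeping of signs and orientations when converting the jump formulation over $\Sigma(\Omega)$ into per-element boundary integrals, and in particular checking that the Dirichlet data (zero on $\partial\Omega$) matches the stabilization on boundary faces. The inverse and trace estimates on polytopes are standard under the mesh regularity assumption $\vartheta$, so they pose no real difficulty.
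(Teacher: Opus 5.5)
Your proposal is correct and follows the paper's argument. A piecewise integration by parts in \eqref{def:discrete-Laplace-WG} gives the same elementwise face identity with orientation signs $\nu_K\cdot\nu_S$; testing with $\phi = \Delta_h v_h - \Delta_\pw v_\Mcal \in P_{k-2}(\Mcal)$ and applying Cauchy--Schwarz, the discrete trace inequality and inverse estimates (your use of $h_S \le h_K$ there is in the right direction) gives the bound, and the norm equivalence then follows from the triangle inequality.
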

\begin{proof}
	A piecewise integration by parts in \eqref{def:discrete-Laplace-WG}
	implies
	\begin{align*}
		&(\Delta_h v_h - \Delta_\pw v_\Mcal, \phi)_{L^2(\Omega)}\\
		&\qquad = \sum_{K \in \Mcal} \sum_{S \in \Sigma(K)} (\nu_K \cdot \nu_S) \int_S \big((\gamma_\Sigma - \nabla v_K \cdot \nu_S) \phi - (v_\Sigma - v_K) \nabla_\pw \phi \cdot \nu_S\big) \d{s}.
	\end{align*}
	This, the Cauchy inequality, and inverse estimates conclude the assertion. We mention that $\|\bullet\|_h$ and $\big(\|\Delta_\pw v_h\|^2 + |v_h|_\s^2\big)^{1/2}$ are even locally equivalent.
\end{proof}

\subsection{Smoothing operator}
Based on the construction of conforming approximations in \Cref{thm:smoothing}, we obtain the subsequent result.
\begin{lemma}[smoothing operator]\label{lem:smoothing-operator-WG}
	There exists a linear bounded operator $\J_h : V_h \to V$ such that any $v_h = (v_\Mcal, v_\Sigma, \gamma_\Sigma) \in V_h$ satisfies the following properties.
	\begin{enumerate}[wide]
		\item[(i)] (consistent weights) $\Pi_\Mcal^k \J_h v_h = v_\Mcal$, $\Pi_\Sigma^k \J_h v_h = v_\Sigma$, $\Pi_\Sigma^{k-1} \nabla \J_h v_h \cdot \nu_\Sigma = \gamma_\Sigma$.
		\item[(ii)] (approximation property) It holds
		\begin{align*}
			\|h_\Mcal^{-2}(v_\Mcal - \J_h v_h)\| + \|h_\Mcal^{-1} \nabla_\pw(v_\Mcal - \J_h v_h)\| + \|\D_\pw^2(v_\Mcal - \J_h v_h)\| \lesssim |v_h|_\s.
		\end{align*}
	\end{enumerate}
\end{lemma}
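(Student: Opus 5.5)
We apply \Cref{thm:smoothing} with $\ell = k$, $m = k$, $n = k-1$, so that $W_h = V_h$. The function is $w \coloneqq v_\Mcal \in P_k(\Mcal)$ and the weights are $(w_\Mcal, w_\Sigma, \delta_\Sigma) \coloneqq (v_\Mcal, v_\Sigma, \gamma_\Sigma)$. We define $\J_h v_h \coloneqq v$ to be the resulting function in $V$. Linearity of $\J_h$ follows from the remark on linearity after \Cref{thm:smoothing}. Property (i) is exactly \Cref{thm:smoothing}(a). Boundedness of $\J_h$ will follow from (ii) together with \Cref{lem:equiv-norm-WG}, via the triangle inequality $\|\D^2 \J_h v_h\| \le \|\D^2_\pw v_\Mcal\| + \|\D^2_\pw(v_\Mcal - \J_h v_h)\|$; on a finite-dimensional space, boundedness in any norm suffices anyway.

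For (ii), we sum the local bound of \Cref{thm:smoothing}(b) over $K \in \Mcal$. Each face appears only a bounded number of times, by mesh regularity. The term $h_K^{-4}\|\Pi_K^k(v_\Mcal - v_\Mcal)\|_K^2$ vanishes. The terms $h_S^{-3}\|\Pi_S^k(v_K - v_\Sigma)\|_S^2$ and $h_S^{-1}\|\Pi_S^{k-1}(\nabla v_K\cdot\nu_S - \gamma_\Sigma)\|_S^2$ are bounded by the corresponding summands of $\s_h(v_h,v_h)$, because $L^2$ projections are stable. On boundary faces, $v_\Sigma = 0$ and $\gamma_\Sigma = 0$, so these summands coincide with the traces of $v_K$ and $\nabla v_K \cdot \nu_S$, which is what the jump terms require there.

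It remains to control the jump terms. On an interior face $S$ with neighbours $K_\pm$, we insert the face unknowns:
\begin{align*}
[v_\Mcal]_S = (v_{K_+} - v_\Sigma) - (v_{K_-} - v_\Sigma), \qquad [\nabla_\pw v_\Mcal]_S\cdot\nu_S = (\nabla v_{K_+}\cdot\nu_S - \gamma_\Sigma) - (\nabla v_{K_-}\cdot\nu_S - \gamma_\Sigma).
\end{align*}
By the triangle inequality, $h_S^{-3}\|[v_\Mcal]_S\|_S^2 + h_S^{-1}\|[\nabla_\pw v_\Mcal]_S\cdot\nu_S\|_S^2$ is then bounded by the stabilization contributions of $K_+$ and $K_-$ on $S$. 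On boundary faces the same holds trivially, as noted above. Summing everything gives (ii), with the weights $h_\Mcal^{-2}$ and $h_\Mcal^{-1}$ absorbed cellwise.

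The main obstacle is a subtlety in the face terms of \Cref{thm:smoothing}(b). The stabilization $\s_h$ is formulated on the faces $S \in \Sigma(K)$, whereas the jump sum in \Cref{thm:smoothing}(b) runs over all faces touching $K$, not only those of $K$. This only enlarges the patch of cells involved, which is harmless after the global summation because of finite overlap. One must also check that the faces $\Sigma$ used in $\s_h$ coincide with those in \Cref{thm:smoothing}, so that the stability of $\Pi_S^k$ and $\Pi_S^{k-1}$ applies face by face. Beyond these checks, the argument is a direct application of the earlier results.
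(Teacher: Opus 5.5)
Your construction and your proof of (i) and (ii) are the paper's. You apply \Cref{thm:smoothing} with $w = w_\Mcal = v_\Mcal$, $w_\Sigma = v_\Sigma$, $\delta_\Sigma = \gamma_\Sigma$, $\ell = m = k$, $n = k-1$, sum the local bounds, and control the jumps by inserting $v_\Sigma$ and $\gamma_\Sigma$, which vanish on boundary faces. Your finite-overlap remarks only make explicit what the paper leaves implicit.

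\textbf{The boundedness step does not work as written.} Your chain $\|\D^2 \J_h v_h\| \le \|\D^2_\pw v_\Mcal\| + \|\D^2_\pw(v_\Mcal - \J_h v_h)\|$ needs $\|\D^2_\pw v_\Mcal\| \lesssim \|v_h\|_h$. But \Cref{lem:equiv-norm-WG} controls only $\|\Delta_\pw v_\Mcal\|$, not the full piecewise Hessian. Bounding $\|\D^2_\pw v_\Mcal\|$ by $\|\Delta_\pw v_\Mcal\|$ plus the stabilization is a discrete Miranda--Talenti inequality. The paper's remark on the reduced-order WG method singles out exactly this kind of estimate as one it cannot prove in general.

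The repair is to work with the Laplacian and use $\|\D^2\phi\| = \|\Delta\phi\|$ for $\phi \in V = H^2_0(\Omega)$:
\[
\|\D^2\J_h v_h\| = \|\Delta \J_h v_h\| \le \|\Delta_\pw v_\Mcal\| + \sqrt{d}\,\|\D^2_\pw(v_\Mcal - \J_h v_h)\| \lesssim \|\Delta_\pw v_\Mcal\| + |v_h|_\s \lesssim \|v_h\|_h .
\]
The middle steps use (ii), and the last uses \Cref{lem:equiv-norm-WG}. Your intermediate claim $\|\D^2_\pw v_\Mcal\| \lesssim \|v_h\|_h$ then does hold, but only as a consequence of this argument via the smoother, not of \Cref{lem:equiv-norm-WG}.

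Your fallback, that any linear map on a finite-dimensional space is bounded, is true but does not help. It gives a mesh-dependent constant. The later arguments, for instance the continuity of $\J_h$ used in \Cref{thm:a-priori-WG}, need $\|\D^2\J_h v_h\| \lesssim \|v_h\|_h$ with a constant independent of $h$.
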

\begin{proof}
	Given $v_h = (v_\Mcal, v_\Sigma, \gamma_\Sigma) \in V_h$,
	we employ \Cref{thm:smoothing} with $w = v_\Mcal$, $w_\Mcal = v_\Mcal$, $w_\Sigma = v_\Sigma$, $\delta_\Sigma = \gamma_\Sigma$, $\ell = m = k$, $n = k-1$ to obtain $\J_h v_h = v$ with (i) and, from \Cref{thm:smoothing}(b),
	\begin{align*}
		\|h_\Mcal^{-2}(v_\Mcal - \J_h v_h)\|^2 + \|h_\Mcal^{-1} \nabla_\pw(v_\Mcal - \J_h v_h)\|^2 + \|\D_\pw^2(v_\Mcal - \J_h v_h)\|^2&\\
		\lesssim \sum_{S \in \Sigma} \big(h_S^{-3}\|[v_\Mcal]_S\|_S^2 + h_S^{-1}\|[\nabla_\pw v_\Mcal]_S \cdot \nu_S\|_S^2\big) + |v_h|_\s^2&.
	\end{align*}
	Since $[v_\Mcal]_S = [v_\Mcal - v_\Sigma]_S$ and $[\nabla_\pw v_\Mcal]_S \cdot \nu_S = [\nabla_\pw v_\Mcal \cdot \nu_S - \gamma_\Sigma]_S$ for any $S \in \Sigma$, this and the triangle inequality conclude (ii) and thus, the boundedness of the operator $\J_h$.
\end{proof}
The weights in \Cref{lem:smoothing-operator-WG} provide full discrete consistency as follows.

\begin{lemma}[discrete consistency]\label{lem:d-consistency-WG}
	Any $v_h = (v_\Mcal, v_\Sigma, \gamma_\Sigma) \in V_h$ and $\phi \in P_{k-2}(\Mcal)$ satisfy $(\Delta \J_h v_h - \Delta_h v_h, \phi)_{L^2(\Omega)} = 0$.
\end{lemma}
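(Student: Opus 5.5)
The plan is to integrate by parts twice on each cell $K \in \Mcal$ and then use the consistent weights from \Cref{lem:smoothing-operator-WG}(i) to replace every moment of $\J_h v_h$ by the corresponding component of $v_h$. Set $v \coloneqq \J_h v_h \in V = H^2_0(\Omega)$ and fix $\phi \in P_{k-2}(\Mcal)$.

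\textbf{Step 1 (integration by parts).} On each $K$, twice integrating by parts gives
\begin{align*}
(\Delta v, \phi)_{L^2(K)} = (v, \Delta \phi)_{L^2(K)} + \int_{\partial K} \big(\nabla v \cdot \nu_K\, \phi - v\, \nabla \phi \cdot \nu_K\big) \d{s}.
\end{align*}
Summing over $K \in \Mcal$ requires collecting the boundary terms face by face. Since $v \in H^2_0(\Omega)$, the traces $v$ and $\nabla v$ are single-valued on interior faces and vanish on boundary faces. Writing $\nabla v \cdot \nu_K = (\nu_K\cdot\nu_S)\, \nabla v \cdot \nu_S$, the contributions of $K_+$ and $K_-$ on $S$ combine into jumps of $\phi$. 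The result is
\begin{align*}
(\Delta v, \phi)_{L^2(\Omega)} = (v, \Delta_\pw \phi)_{L^2(\Omega)} + \sum_{S \in \Sigma(\Omega)} \int_S \big(\nabla v\cdot\nu_S\, [\phi]_S - v\,[\nabla_\pw \phi]_S\cdot \nu_S\big) \d{s}.
\end{align*}

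\textbf{Step 2 (insert the weights).} Each test function in the three terms is a polynomial of low enough degree to invoke \Cref{lem:smoothing-operator-WG}(i):
\begin{enumerate}[wide]
\item[(1)] $\Delta_\pw \phi \in P_{k-4}(\Mcal) \subset P_k(\Mcal)$, so $(v, \Delta_\pw\phi) = (\Pi_\Mcal^k v, \Delta_\pw \phi) = (v_\Mcal, \Delta_\pw\phi)$.
\item[(2)] $[\phi]_S \in P_{k-2}(S) \subset P_{k-1}(S)$, so $\int_S \nabla v\cdot\nu_S [\phi]_S \d{s} = \int_S \gamma_\Sigma [\phi]_S \d{s}$ by $\Pi_\Sigma^{k-1}\nabla v\cdot\nu_\Sigma = \gamma_\Sigma$.
\item[(3)] $[\nabla_\pw\phi]_S\cdot\nu_S \in P_{k-3}(S) \subset P_k(S)$, so $\int_S v\,[\nabla_\pw\phi]_S\cdot\nu_S \d{s} = \int_S v_\Sigma\,[\nabla_\pw\phi]_S\cdot\nu_S \d{s}$.
\end{enumerate}
Substituting these identities reproduces exactly the right-hand side of \eqref{def:discrete-Laplace-WG}. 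Hence $(\Delta v, \phi) = (\Delta_h v_h, \phi)$, which is the claim.

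\textbf{Main obstacle.} The only delicate point is the sign and orientation bookkeeping in Step 1. One must check that the face normals $\nu_K$ versus $\nu_S$ and the jump convention $[\cdot]_S = \cdot|_{K_+} - \cdot|_{K_-}$ produce the signs of \eqref{def:discrete-Laplace-WG} exactly. The argument also relies on the definition using only interior faces, which is justified because $v$ and $\nabla v$ vanish on $\partial\Omega$.
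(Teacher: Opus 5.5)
Your proof is correct and is essentially the paper's argument. The paper also integrates by parts cellwise, writes $(\Delta \J_h v_h - \Delta_h v_h, \phi)_{L^2(\Omega)}$ as the cell term plus interior-face terms with $\J_h v_h - v_\Mcal$, $\nabla \J_h v_h \cdot \nu_S - \gamma_\Sigma$ and $\J_h v_h - v_\Sigma$ tested against $\Delta_\pw \phi$, $[\phi]_S$ and $[\nabla_\pw \phi]_S \cdot \nu_S$, and then concludes from the consistent weights in \Cref{lem:smoothing-operator-WG}(i), exactly as in your Steps 1--2.
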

\begin{proof}
	An integration by parts and the definition of $\Delta_h$ in \eqref{def:discrete-Laplace-WG} prove that $(\Delta \J_h v_h - \Delta_h v_h, \phi)_{L^2(\Omega)}$ is equal to
	\begin{align*}
		&(\J_h v_h - v_\Mcal, \Delta_\pw \phi)_{L^2(\Omega)}\\
		&\quad + \sum_{S \in \Sigma(\Omega)} \int_S \big((\nabla \J_h v_h \cdot \nu_S - \gamma_\Sigma) [\phi]_S - (\J_h v_h - v_\Sigma) [\nabla_\pw \phi]_S \cdot \nu_S\big) \d{s}.
	\end{align*}
	The right-hand side vanishes from \Cref{lem:smoothing-operator-WG}(i).
\end{proof}

\subsection{Interpolation}
Given $v \in V$, let $\mathrm{G}_h v \in P_k(\Mcal)$ denote the Galerkin projection of $v$, i.e., the unique solution to
\begin{align}\label{def:Galerkin}
	\begin{split}
		(\D^2_\pw \mathrm{G}_h v, \D^2_\pw \phi)_{L^2(\Omega)} &= (\D^2 v, \D^2_\pw \phi)_{L^2(\Omega)} \quad\text{for any } \phi \in P_k(\Mcal),\\
		(\mathrm{G}_h v, p)_{L^2(\Omega)} &= (v, p)_{L^2(\Omega)} \quad\text{for any } p \in P_1(\Mcal).
	\end{split}
\end{align}
By design, $\D_\pw^2 (v - \G_h v) \perp \D_\pw^2 P_k(\Mcal)$ in $L^2(\Omega)^{n \times n}$ and, hence,
\begin{align}\label{eq:ba-Galerkin}
	\|\D_\pw^2(v - \G_h v)\| = \min_{\phi \in P_k(\Mcal)} \|\D_\pw^2(v - \phi)\|.
\end{align}
We define the interpolation operator $\I_h : V \to V_h$ by
\begin{align*}
	v \mapsto (\G_h v, \Pi_\Sigma^k v, \Pi_\Sigma^{k-1} \nabla v \cdot \nu_\Sigma) \in V_h.
\end{align*}
The best approximation property of $\G_h$ allows for the subsequent statements.
\begin{lemma}[quasi-optimality of stabilization]\label{lem:quasi-opt-stab-WG}
	Any $v \in V$ satisfies
	\begin{align*}
		|\I_h v|_\s \lesssim \|\D^2_\pw(v - \G_h v)\|.
	\end{align*}
\end{lemma}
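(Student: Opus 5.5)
We bound the two face contributions of $|\I_h v|_\s^2$ separately on each face $S \in \Sigma(K)$, $K \in \Mcal$. Write $e \coloneqq v - \G_h v \in H^2(\Mcal)$ and $e_K \coloneqq e|_K$. Since $\Pi_S^k$ reproduces $P_k(S)$ and $(\G_h v)|_K|_S \in P_k(S)$, we have
\begin{align*}
	(\G_h v)|_K - \Pi_\Sigma^k v = -\Pi_S^k e_K \quad\text{on } S.
\end{align*}
This holds also on boundary faces, because $v=0$ there and $\Pi_\Sigma^k v$ is then zero. Similarly,
\begin{align*}
	\nabla (\G_h v)|_K \cdot \nu_S - \Pi_\Sigma^{k-1}\nabla v\cdot\nu_S = -\Pi_S^{k-1}(\nabla e_K\cdot\nu_S) \quad\text{on } S,
\end{align*}
using $\nabla v = 0$ on $\partial\Omega$. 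The $L^2$ stability of the face projections then reduces the stabilization to $\sum_K\sum_{S\in\Sigma(K)} (h_S^{-3}\|e_K\|_S^2 + h_S^{-1}\|\nabla e_K\|_S^2)$.

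Next, apply the trace inequality on the simplices of the submesh $\Tcal$ contained in $K$ that carry $S$, together with $h_S \approx h_K$:
\begin{align*}
	h_S^{-3}\|e_K\|_S^2 + h_S^{-1}\|\nabla e_K\|_S^2 \lesssim h_K^{-4}\|e_K\|_K^2 + h_K^{-2}\|\nabla e_K\|_K^2 + \|\D^2 e_K\|_K^2.
\end{align*}
It remains to show that the lower-order terms on $K$ are bounded by $\|\D^2 e_K\|_K$ through a Poincar\'e-type inequality. This is the main step, and it is where the second condition in \eqref{def:Galerkin} enters. Testing with $p \in P_1(\Mcal)$ supported on a single cell gives $\Pi_K^1 e_K = 0$ for every $K$, so $\int_K e_K \d{x} = 0$ and $\int_K e_K x\d{x} = 0$.

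I would first try to deduce from this that $\int_K \nabla e_K \d{x} = 0$. This fails in general: the moment conditions do not force it. Instead, write $e_K = (e_K - q) + q$ with $q \in P_1(K)$ chosen so that $\int_K (e_K - q)\d{x} = 0$ and $\int_K \nabla(e_K - q)\d{x} = 0$. The standard Poincar\'e inequality, valid on cells that are star-shaped or a union of shape-regular simplices with constants depending on $\vartheta$, then yields
\begin{align*}
	h_K^{-2}\|e_K - q\|_K + h_K^{-1}\|\nabla(e_K - q)\|_K \lesssim \|\D^2 e_K\|_K.
\end{align*}
Since $\Pi_K^1 e_K = 0$, we get $q = -\Pi_K^1(e_K - q)$, hence $\|q\|_K \le \|e_K - q\|_K$. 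An inverse estimate on $P_1(K)$ gives $\|\nabla q\|_K \lesssim h_K^{-1}\|q\|_K$. Altogether,
\begin{align*}
	h_K^{-2}\|e_K\|_K + h_K^{-1}\|\nabla e_K\|_K \lesssim \|\D^2 e_K\|_K.
\end{align*}

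Summing over all faces and cells, and using that each cell has a bounded number of faces by mesh regularity, concludes $|\I_h v|_\s \lesssim \|\D^2_\pw(v - \G_h v)\|$. The only delicate points are the Poincar\'e constant on general polytopes, which is handled via $\vartheta$ and the connectedness of $K$ as a union of submesh simplices, and the boundary-face convention, which is settled by $v \in H^2_0(\Omega)$.
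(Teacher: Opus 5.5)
Your proof is correct and follows the paper's route: the non-expansiveness of the face $L^2$ projections reduces $|\I_h v|_\s$ to face norms of $v - \G_h v$, and these are then bounded by the trace inequality and a Poincar\'e-type inequality that uses $\Pi_\Mcal^1(v - \G_h v) = 0$. Your affine correction $q$ correctly spells out the Poincar\'e step that the paper leaves implicit.
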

\begin{proof}
	Since $L^2$ projections are non expansive,
	\begin{align*}
		|\I_h v|_\s^2 &= \sum_{K \in \Mcal} \sum_{S \in \Sigma(K)} \big(h_S^{-3}\|\Pi_S^k(\G_h v|_K - v)\|_S^2 + h_S^{-1}\|\Pi_S^{k-1}\nabla(\G_h v|_K - v) \cdot \nu_S\|_S^2\big)\\
		&\leq \sum_{K \in \Mcal} \sum_{S \in \Sigma(K)} \big(h_S^{-3}\|v - \G_h v|_K\|_S^2 + h_S^{-1}\|\nabla(v - \G_h v|_K) \cdot \nu_S\|_S^2\big).
	\end{align*}
	The trace and Poincar\'e inequality conclude the proof.
\end{proof}
\begin{remark}[$L^2$ orthogonal projection]
	If we define the interpolation operator $\I_h$ as $v \mapsto (\Pi_\Mcal^k v, \Pi_\Sigma^k v, \Pi_\Sigma^{k-1} \nabla v \cdot \nu_\Sigma) \in V_h$, then $|\I_h v|_\s \lesssim \|\D^2_\pw(v - \Pi_\Mcal^k v)\|$ for any $v \in V$. While $\|\D^2_\pw(v - \G_h v)\| \leq \|\D^2_\pw(v - \Pi_\Mcal^k v)\|$, it is not clear how the constant in the reverse direction behaves on unstructured polytopal meshes.
\end{remark}

\begin{lemma}[quasi-optimality of discrete Laplacian]\label{lem:quasi-opt-dis-Laplace-WG}
	Any $v \in V$ satisfies
	\begin{align*}
		\|\Delta v - \Delta_h \I_h v\| \lesssim \|\D^2_\pw(v - \G_h v)\|.
	\end{align*}
\end{lemma}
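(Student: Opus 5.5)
Since $\Delta_h \I_h v \in P_{k-2}(\Mcal)$, I split
\begin{align*}
	\|\Delta v - \Delta_h \I_h v\| \leq \|\Delta v - \Pi_\Mcal^{k-2}\Delta v\| + \|\Pi_\Mcal^{k-2}\Delta v - \Delta_h \I_h v\|.
\end{align*}
The first term is at most $\|\Delta v - \Delta_\pw \G_h v\| \lesssim \|\D^2_\pw(v - \G_h v)\|$, because $\Delta_\pw \G_h v \in P_{k-2}(\Mcal)$ and $L^2$ projections give the best approximation in $P_{k-2}(\Mcal)$. So everything reduces to the second term, and for it I test with an arbitrary $\phi \in P_{k-2}(\Mcal)$.

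For $\phi \in P_{k-2}(\Mcal)$, piecewise integration by parts of $(\Delta v, \phi)_{L^2(\Omega)}$ (twice, using $v \in H^2_0(\Omega)$, so that $v$ and $\nabla v$ vanish on $\partial\Omega$ and are single-valued on interior faces) gives
\begin{align*}
	(\Delta v, \phi)_{L^2(\Omega)} = (v, \Delta_\pw \phi)_{L^2(\Omega)} + \sum_{S \in \Sigma(\Omega)} \int_S \big(\nabla v \cdot \nu_S [\phi]_S - v [\nabla_\pw \phi]_S \cdot \nu_S\big) \d{s}.
\end{align*}
Compare this with the definition \eqref{def:discrete-Laplace-WG} applied to $\I_h v = (\G_h v, \Pi_\Sigma^k v, \Pi_\Sigma^{k-1}\nabla v \cdot \nu_\Sigma)$. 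Since $[\phi]_S \in P_{k-2}(S)$ and $[\nabla_\pw\phi]_S\cdot\nu_S \in P_{k-3}(S)$, the $L^2$ projections onto faces drop out exactly. The face terms therefore coincide, and we obtain
\begin{align*}
	(\Delta v - \Delta_h \I_h v, \phi)_{L^2(\Omega)} = (v - \G_h v, \Delta_\pw \phi)_{L^2(\Omega)}.
\end{align*}
This is the key identity: the consistency error reduces to a volume term involving only $v - \G_h v$. Because $\Delta_\pw\phi$ is not in $P_1(\Mcal)$ in general, the second condition in \eqref{def:Galerkin} does not make this term vanish.

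It remains to bound $(v - \G_h v, \Delta_\pw \phi)_{L^2(\Omega)}$ cellwise. On each $K$, the function $v - \G_h v$ is $L^2$-orthogonal to $P_1(K)$ by \eqref{def:Galerkin}, since $P_1(\Mcal)$ is taken piecewise. A Poincaré-type (Bramble–Hilbert) inequality on polytopes with mesh regularity $\vartheta$ therefore yields
\begin{align*}
	\|v - \G_h v\|_K \lesssim h_K^2 \|\D^2(v - \G_h v)\|_K.
\end{align*}
The inverse estimate $\|\Delta \phi\|_K \lesssim h_K^{-2}\|\phi\|_K$ then gives
\begin{align*}
	(v - \G_h v, \Delta_\pw\phi)_{L^2(\Omega)} \lesssim \|\D^2_\pw(v - \G_h v)\|\,\|\phi\|.
\end{align*}
Choosing $\phi = \Pi_\Mcal^{k-2}\Delta v - \Delta_h \I_h v$ bounds the second term and concludes the proof.

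The main obstacle is the middle step: the face terms must cancel exactly. This requires the face projection degrees ($k$ for traces, $k-1$ for normal derivatives) to dominate the degrees of $[\phi]_S$ and $[\nabla_\pw \phi]_S\cdot\nu_S$, which they do. It also requires the boundary faces to be handled correctly, which is fine since $v \in H^2_0(\Omega)$.
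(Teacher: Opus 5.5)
Your proof is correct, but it takes a different route from the paper's. The paper argues in one line. It uses the triangle inequality with the intermediate term $\Delta_\pw \G_h v$ and the bound $\|\Delta_\pw(v - \G_h v)\| \lesssim \|\D^2_\pw(v - \G_h v)\|$. Then \Cref{lem:equiv-norm-WG} gives $\|\Delta_h \I_h v - \Delta_\pw \G_h v\| \lesssim |\I_h v|_\s$, and \Cref{lem:quasi-opt-stab-WG} gives $|\I_h v|_\s \lesssim \|\D^2_\pw(v - \G_h v)\|$. So the face contributions are never cancelled; they are only estimated through the stabilization, via trace inequalities. You instead split with $\Pi_\Mcal^{k-2}\Delta v$ and derive the identity $(\Delta v - \Delta_h \I_h v, \phi)_{L^2(\Omega)} = (v - \G_h v, \Delta_\pw \phi)_{L^2(\Omega)}$ for all $\phi \in P_{k-2}(\Mcal)$. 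In it the face terms cancel exactly, because the face unknowns have degrees $k$ and $k-1$, which dominate those of $[\nabla_\pw\phi]_S\cdot\nu_S$ and $[\phi]_S$. You then handle the remaining volume term with the cellwise $P_1$-orthogonality of $\G_h$, the Poincar\'e inequality, and an inverse estimate. The paper's route is shorter given lemmas it needs anyway. It does not depend on the degrees of the face unknowns, only on quasi-optimality of the stabilization, and it is the template reused verbatim for the DG and HHO methods. Your route avoids trace inequalities and the stabilization entirely and exposes more structure. It shows that $\Delta_h \I_h v = \Pi_\Mcal^{k-2}\Delta v$ holds exactly whenever $k \le 5$, since then $\Delta_\pw \phi \in P_1(\Mcal)$. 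It would hold for every $k$ if the cell component of $\I_h$ were $\Pi_\Mcal^k v$. Hence the only deviation from a commuting diagram comes from using the Galerkin projection instead of the $L^2$ projection in the cells.
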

\begin{proof}
	The assertion follows from a triangle inequality, \Cref{lem:equiv-norm-WG}, $\|\Delta_\pw(v - \G_h v)\| \lesssim \|\D^2_\pw(v - \G_h v)\|$, and \Cref{lem:quasi-opt-stab-WG}.
\end{proof}

\begin{lemma}[conforming quasi-interpolation]\label{lem:quasi-int-WG}
	Any $v \in V$ satisfies
	\begin{align*}
		\|\Delta(v - \J_h \I_h v)\| = \|\D^2 (v - \J_h \I_h v)\| \lesssim \|\D^2_\pw(v - \G_h v)\|.
	\end{align*}
\end{lemma}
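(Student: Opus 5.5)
The equality $\|\Delta(v - \J_h \I_h v)\| = \|\D^2(v - \J_h \I_h v)\|$ holds because $v - \J_h \I_h v \in V = H^2_0(\Omega)$. For such functions, two integrations by parts (or density of $C_c^\infty(\Omega)$) give $\|\Delta \psi\| = \|\D^2 \psi\|$. It therefore remains to bound $\|\D^2(v - \J_h \I_h v)\|$.

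We insert the Galerkin projection $\G_h v$, which is exactly the cell component of $\I_h v = (\G_h v, \Pi_\Sigma^k v, \Pi_\Sigma^{k-1}\nabla v\cdot\nu_\Sigma)$, and apply a piecewise triangle inequality:
\begin{align*}
\|\D^2(v - \J_h \I_h v)\| \leq \|\D^2_\pw(v - \G_h v)\| + \|\D^2_\pw(\G_h v - \J_h \I_h v)\|.
\end{align*}
The first term is already of the desired form.

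For the second term we invoke \Cref{lem:smoothing-operator-WG}(ii) with $v_h = \I_h v$, whose cell component is $v_\Mcal = \G_h v$. This gives
\begin{align*}
\|\D^2_\pw(\G_h v - \J_h \I_h v)\| \lesssim |\I_h v|_\s.
\end{align*}
\Cref{lem:quasi-opt-stab-WG} then bounds $|\I_h v|_\s \lesssim \|\D^2_\pw(v - \G_h v)\|$, and combining the two displays concludes the proof.

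There is no real obstacle. All the work sits in the earlier lemmas: the smoothing estimate from \Cref{thm:smoothing} reduces jump terms to the stabilization, and \Cref{lem:quasi-opt-stab-WG} uses the trace and Poincaré inequalities on $v - \G_h v$. The Poincaré step works because the second condition in \eqref{def:Galerkin} makes $v - \G_h v$ $L^2$-orthogonal to $P_1(K)$ cellwise. The only point to check is that the identity $\|\Delta\psi\| = \|\D^2\psi\|$ is applied only to the conforming function $v - \J_h\I_h v$, and not to the piecewise terms.
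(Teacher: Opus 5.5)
Your proposal is correct and follows the paper's proof: insert $\G_h v$ by a triangle inequality, bound $\|\D^2_\pw(\G_h v - \J_h \I_h v)\| \lesssim |\I_h v|_\s$ via \Cref{lem:smoothing-operator-WG}(ii), and conclude with \Cref{lem:quasi-opt-stab-WG}. Your justification of the identity $\|\Delta\psi\| = \|\D^2\psi\|$ for the conforming function $\psi = v - \J_h \I_h v \in H^2_0(\Omega)$ supplies a step the paper leaves implicit.
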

\begin{proof}
	The sum of \Cref{lem:smoothing-operator-WG}(ii) over all cells $K \in \Mcal$ leads to
	\begin{align*}
		\|\D^2_\pw(\G_h v - \J_h \I_h v)\| \lesssim |\I_h v|_\s.
	\end{align*}
	This, a triangle inequality, \eqref{eq:ba-Galerkin}, and \Cref{lem:quasi-opt-stab-WG} conclude the assertion.
\end{proof}

\subsection{Error estimates}
This section establishes the main results \eqref{ineq:main_results}.
\begin{theorem}[a~priori]\label{thm:a-priori-WG}
	It holds
	\begin{align*}
		\|\D^2_\pw(u - u_\Mcal)\| + \|\I_h u - u_h\|_h + |u_h|_\s \lesssim \|\D^2_\pw(u - \G_h u)\| + \osc(f,\Mcal)
	\end{align*}
	with the data oscillation $\osc(f,\Mcal) \coloneqq \|h_\Mcal^2(1 - \Pi_\Mcal^k) f\|$.
\end{theorem}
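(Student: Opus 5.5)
The plan is to start from the error split \eqref{ineq:err-split} with $e_h \coloneqq \I_h u - u_h = (e_\Mcal, e_\Sigma, \epsilon_\Sigma)$. Since $\|\bullet\|_h$ is induced by $a_h$, we have
$\|e_h\|_h^2 = a_h(\I_h u, e_h) - a(u,\J_h e_h) + F(\J_h e_h) - F_h(e_h)$.

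\emph{Consistency term.} Write
\[
a_h(\I_h u, e_h) = (\Delta_h \I_h u, \Delta_h e_h)_{L^2(\Omega)} + \s_h(\I_h u, e_h).
\]
Since $\Delta_h e_h \in P_{k-2}(\Mcal)$, \Cref{lem:d-consistency-WG} gives $(\Delta_h\I_h u, \Delta_h e_h) = (\Delta_h \I_h u, \Delta \J_h e_h)$. Hence
\[
(\Delta_h \I_h u,\Delta_h e_h) - a(u,\J_h e_h) = (\Delta_h \I_h u - \Delta u, \Delta\J_h e_h).
\]
By \Cref{lem:quasi-opt-dis-Laplace-WG}, $\|\Delta_h\I_h u - \Delta u\| \lesssim \|\D^2_\pw(u-\G_h u)\|$. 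For the other factor, $\|\Delta \J_h e_h\|\le \|\Delta_\pw e_\Mcal\| + \|\D^2_\pw(e_\Mcal - \J_h e_h)\| \lesssim \|e_h\|_h$ by \Cref{lem:smoothing-operator-WG}(ii) and \Cref{lem:equiv-norm-WG}. The stabilization part is handled by Cauchy--Schwarz together with \Cref{lem:quasi-opt-stab-WG}: $\s_h(\I_h u,e_h) \le |\I_h u|_\s |e_h|_\s \lesssim \|\D^2_\pw(u-\G_h u)\|\,\|e_h\|_h$.

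\emph{Data term.} Using $F_h(e_h) = (f,e_\Mcal)$ and $\Pi^k_\Mcal \J_h e_h = e_\Mcal$ from \Cref{lem:smoothing-operator-WG}(i), we get
\[
F(\J_h e_h) - F_h(e_h) = (f, \J_h e_h - e_\Mcal) = ((1-\Pi_\Mcal^k)f, \J_h e_h - e_\Mcal).
\]
This is at most $\osc(f,\Mcal)\,\|h_\Mcal^{-2}(\J_h e_h - e_\Mcal)\| \lesssim \osc(f,\Mcal)\,|e_h|_\s$ by \Cref{lem:smoothing-operator-WG}(ii). Dividing by $\|e_h\|_h$ then yields $\|\I_h u - u_h\|_h \lesssim \|\D^2_\pw(u-\G_h u)\| + \osc(f,\Mcal)$.

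\emph{Remaining terms.} By the triangle inequality, $|u_h|_\s \le |\I_h u|_\s + |e_h|_\s$, and both pieces are controlled by \Cref{lem:quasi-opt-stab-WG} and the bound above. For the cell error, write
\[
\|\D^2_\pw(u-u_\Mcal)\| \le \|\D^2_\pw(u-\G_h u)\| + \|\D^2_\pw e_\Mcal\|.
\]
The term $\|\D^2_\pw e_\Mcal\|$ requires care, because the norm equivalence of \Cref{lem:equiv-norm-WG} only controls $\|\Delta_\pw e_\Mcal\|$. I would bound it as
\[
\|\D^2_\pw e_\Mcal\| \le \|\D^2_\pw(e_\Mcal - \J_h e_h)\| + \|\D^2 \J_h e_h\|.
\]
Since $\J_h e_h \in H^2_0(\Omega)$, we have $\|\D^2\J_h e_h\| = \|\Delta \J_h e_h\|$, which is $\lesssim \|e_h\|_h$ as shown in the consistency step. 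The first term is $\lesssim |e_h|_\s$ by \Cref{lem:smoothing-operator-WG}(ii). This step, passing from the Laplacian to the full Hessian through the conforming companion, is the only slightly delicate point; everything else is a direct combination of the preceding lemmas.
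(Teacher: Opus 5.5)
Your proof is correct and follows the paper's argument: the same error split, the discrete consistency identity (applied with $v_h = e_h$ and $\phi = \Delta_h \I_h u \in P_{k-2}(\Mcal)$; the relevant membership is not that of $\Delta_h e_h$), the data-oscillation bound via the consistent cell weights of $\J_h$, and the passage from the Laplacian to the full Hessian through the conforming companion $\J_h e_h$. The only deviations are cosmetic. You bound $\s_h(\I_h u, e_h)$ by Cauchy--Schwarz, as the paper itself does in the DG section, instead of the polarization identity $\s_h(\I_h u,e_h) = (|\I_h u|_\s^2 + |e_h|_\s^2 - |u_h|_\s^2)/2$. You also split $u - u_\Mcal$ through $\G_h u$ rather than through $\J_h \I_h u$, which spares you \Cref{lem:quasi-int-WG}.
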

\begin{proof}
	We abbreviate $e_h = (e_\Mcal, e_\Sigma, \alpha_\Sigma) \coloneqq \I_h u - u_h \in V_h$. 
	From \Cref{lem:smoothing-operator-WG} and the Poincar\'e inequality, we infer
	\begin{align}\label{ineq:pr-a-priori-data-appr}
		F(\J_h e_h) - F_h(e_h) = ((1 - \Pi_\Mcal^k) f, \J_h e_h - e_\Mcal)_{L^2(\Omega)} \lesssim \osc(f,\Mcal) \|e_h\|_{h}.
	\end{align}
	Since $(\Delta_h \I_h u, \Delta_h e_h)_{L^2(\Omega)} = (\Delta_h \I_h u, \Delta \J_h e_h)_{L^2(\Omega)}$ from \Cref{lem:d-consistency-WG},
	\begin{align*}
		a_h(\I_h u, e_h) - a(u,\J_h e_h) = (\Delta_h \I_h u - \Delta u, \Delta \J_h e_h)_{L^2(\Omega)} + \s_h(\I_h u, e_h).
	\end{align*}
	The two previously displayed formula, \Cref{lem:quasi-opt-dis-Laplace-WG}, the continuity of $\J_h$, and the error split \eqref{ineq:err-split} imply
	\begin{align*}
		\|e_h\|^2_h &= a_h(e_h,e_h) = a_h(\I_h u, e_h) - a(u, \J_h e_h) + F(\J_h e_h) - F_h(e_h)\\
		&\leq C\big(\|\D^2_\pw(u - \G_h u)\| + \osc(f,\Mcal)\big)\|e_h\|_h + \s_h(\I_h u, e_h)
	\end{align*}
	with a generic positive constant $C$.
	Since $\s_h(\I_h u, e_h) = |\I_h u|^2_\s/2 + |e_h|^2_\s/2 - |u_h|_\s^2/2$, this and \Cref{lem:quasi-opt-stab-WG} yield
	\begin{align}\label{ineq:pr-a-priori}
		\|e_h\|_h + |u_h|_\s \lesssim \|\D^2_\pw(u - \G_h u)\| + \osc(f,\Mcal).
	\end{align}
	Finally, we deduce from the triangle inequality, \Cref{lem:smoothing-operator-WG}, and \Cref{lem:quasi-int-WG} that
	\begin{align*}
		\|\D_\pw^2(u - u_\Mcal)\| &\leq \|\D^2(u - \J_h \I_h u)\| + \|\D^2(\J_h \I_h u - \J_h u_h)\| + \|\D^2_\pw(\J_h u_h - u_\Mcal)\|\\
		&\lesssim \|\D^2_\pw(u - \G_h u)\| + \|e_h\|_h + |u_h|_\s. 
	\end{align*}
	This and \eqref{ineq:pr-a-priori} conclude the proof.
\end{proof}
\begin{remark}[WG method of \cite{ZhangZhai2015} with reduced polynomial order]
	To extend the analysis of this section to \cite{ZhangZhai2015}, we additionally need
	\begin{align*}
		\|\D^2_\pw \phi\|^2 \lesssim \|\Delta_\pw \phi\|^2 + \sum_{S \in \Sigma} \big(h_S^{-3}\|\Pi_S^{k-1}[\phi]\|^2_S + h_S^{-1}\|[\nabla_\pw \phi]_S \cdot \nu_S\|^2_S\big)
	\end{align*}
	for any $\phi \in P_k(\Mcal)$
	to ensure the boundedness of $\J_h$. Unfortunately, we are unable to prove this discrete Miranda-Talenti typed inequality.
\end{remark}

In the following, we derive lower-order error estimates via Aubin-Nitsche duality arguments with the following elliptic regularity \cite{BlumRannacher1980,Grisvard2011,Dauge1988}. Let $0 < s < 2$ be given. We assume that there exists $0 < \delta \leq 2 - s$ such that any solution $z \in V$ to $\Delta^2 z = G \in H^{-s}(\Omega)$ satisfies $z \in H^{2+\delta}(\Omega)$ with
\begin{align}\label{ineq:elliptic-regularity}
	\|z\|_{H^{2+\delta}(\Omega)} \lesssim \|G\|_{H^{-s}(\Omega)}.
\end{align}
\vspace*{-1.5em}
\begin{theorem}[$H^s$ error]\label{thm:Hs}
	Suppose that \eqref{ineq:elliptic-regularity} holds. Then
	\begin{align*}
		\|u - u_\Mcal\|_{H^s(\Mcal)} + \|u - \J_h u_h\|_{H^s(\Omega)} \lesssim h_\mathrm{max}^{\min\{\delta,k-1\}} \big(\|\D^2_\pw(u - \G_h u)\| + \osc(f,\Mcal)\big)
	\end{align*}
	with the piecewise $H^s$ norm $\|\bullet\|_{H^s(\Mcal)}$ w.r.t.~the mesh $\Mcal$.
\end{theorem}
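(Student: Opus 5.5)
Since $u_\Mcal - \J_h u_h$ is controlled in piecewise $H^2$ with weights $h_\Mcal^{-2}$, $h_\Mcal^{-1}$ by \Cref{lem:smoothing-operator-WG}(ii), and $|u_h|_\s$ is bounded by \Cref{thm:a-priori-WG}, interpolation gives $\|u_\Mcal - \J_h u_h\|_{H^s(\Mcal)} \lesssim h_{\max}^{2-s}|u_h|_\s$, and $2-s\geq \delta$. It therefore suffices to bound the conforming error $e \coloneqq u - \J_h u_h \in V$ in $H^s(\Omega)$. By duality,
\begin{align*}
\|e\|_{H^s(\Omega)} = \sup_{G \in H^{-s}(\Omega),\ \|G\|_{H^{-s}(\Omega)}=1} \langle G, e\rangle.
\end{align*}
For each such $G$ let $z \in V$ solve $\Delta^2 z = G$. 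Then $\langle G,e\rangle = a(z,e) = a(z,u) - a(z,\J_h u_h)$, and \eqref{ineq:elliptic-regularity} gives $\|z\|_{H^{2+\delta}(\Omega)}\lesssim 1$.

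The first step is to insert the discrete problem with the test function $\I_h z$. This gives
\begin{align*}
a(z,u)-a(z,\J_h u_h) = F(z) - F_h(\I_h z) + \big[a_h(u_h,\I_h z) - a(\J_h u_h, z)\big].
\end{align*}
Next I expand $a_h(u_h,\I_h z) = (\Delta_h u_h,\Delta_h\I_h z) + \s_h(u_h,\I_h z)$ and use \Cref{lem:d-consistency-WG} to replace $\Delta_h u_h$ by $\Delta\J_h u_h$ wherever it is tested against $P_{k-2}(\Mcal)$. The remainder is
\begin{align*}
(\Delta_h u_h - \Delta \J_h u_h, \Delta z - \Pi^{k-2}_\Mcal\Delta z) + (\Delta\J_h u_h,\Delta_h \I_h z - \Delta z) + \s_h(u_h,\I_h z).
\end{align*}
Each term is bounded by a product of one factor involving $u$ and one involving $z$:
\begin{itemize}
\item The first factor $\|\Delta_h u_h - \Delta\J_h u_h\|$ is $\lesssim |u_h|_\s$ by \Cref{lem:equiv-norm-WG} and \Cref{lem:smoothing-operator-WG}(ii).
\item The term $\s_h(u_h,\I_h z)$ is $\le |u_h|_\s\,|\I_h z|_\s$, and $|\I_h z|_\s \lesssim \|\D^2_\pw(z-\G_h z)\|$ by \Cref{lem:quasi-opt-stab-WG}.
\item For $z$ the standard approximation estimates give $\|\D^2_\pw(z-\G_h z)\| + \|(1-\Pi^{k-2}_\Mcal)\Delta z\| \lesssim h_{\max}^{\min\{\delta,k-1\}}\|z\|_{H^{2+\delta}}$. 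The exponent cap comes from $\Pi^{k-2}$ when $k=2$ combined with $\delta\le 1$ effectively.
\end{itemize}

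The main obstacle is the middle term. A naive bound $\|\Delta \J_h u_h\|\,\|\Delta z - \Delta_h\I_h z\|$ only gives $O(h^\delta)$, not $h^\delta$ times the error. The fix is to subtract the continuous identity $a(u,z)=F(z)$ and regroup, writing $\Delta\J_h u_h = \Delta u - \Delta e$. The piece $(\Delta e, \Delta_h\I_h z - \Delta z)$ is then a product of two small factors; by \Cref{thm:a-priori-WG} and \Cref{lem:quasi-int-WG}, $\|\Delta e\|$ is bounded by the right-hand side of \Cref{thm:a-priori-WG}. The piece with $\Delta u$ combines with the data terms into
\begin{align*}
(\Delta u,\Delta_h\I_h z-\Delta z) + F(z)-F_h(\I_h z)-\ldots
\end{align*}
and this I will handle by the same consistency argument as in \Cref{thm:a-priori-WG} with the roles of $u$ and $z$ swapped. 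That is, I use \Cref{lem:d-consistency-WG} on $\I_h z$ through $\J_h\I_h z$, together with $a(u,\J_h\I_h z - z)=F(\J_h\I_h z - z)$. This leaves $(\Delta u - \Delta_h \I_h u,\ \Delta_h \I_h z - \Delta\J_h\I_h z)$-type products, controlled by \Cref{lem:quasi-opt-dis-Laplace-WG} and \Cref{lem:quasi-int-WG} applied to both $u$ and $z$.

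The data term reduces to $((1-\Pi^k_\Mcal)f,\ z - \G_h z\ \text{or}\ \J_h\I_h z - z)$. It is bounded by $\osc(f,\Mcal)$ times $\|h_\Mcal^{-2}(\cdot)\|$ of that difference, which is $\lesssim h_{\max}^{\delta}$ by Poincar\'e and \Cref{lem:smoothing-operator-WG}(ii). Collecting all terms yields the claim.
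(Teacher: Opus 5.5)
Your argument is essentially the paper's. After your regrouping, the four terms you obtain are exactly those of the paper's identity \eqref{eq:pr-Hs-estimates}: your $(\Delta u-\Delta_h\I_h u,\Delta_h\I_h z-\Delta\J_h\I_h z)_{L^2(\Omega)}+(\Delta(u-\J_h u_h),\Delta z-\Delta_h\I_h z)_{L^2(\Omega)}$ equals the paper's $a(z-\J_h\I_h z,u-\J_h u_h)-(\Delta\J_h\I_h z-\Delta_h\I_h z,\Delta\J_h u_h)_{L^2(\Omega)}$. The only difference is that you subtract $\Delta_h\I_h u$ where the paper subtracts $\Delta_\pw u_\Mcal$ using \Cref{lem:d-consistency-WG}, and the terms are bounded by the same lemmas.

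There are two harmless slips. First, $(\Delta_h u_h,\Delta_h\I_h z)_{L^2(\Omega)}=(\Delta\J_h u_h,\Delta_h\I_h z)_{L^2(\Omega)}$ holds exactly, so your first displayed remainder should not contain the term with $(1-\Pi^{k-2}_\Mcal)\Delta z$. Second, the data term is precisely $((1-\Pi^k_\Mcal)f,\J_h\I_h z-\G_h z)_{L^2(\Omega)}$, which is legitimate because $\Pi^k_\Mcal\J_h\I_h z=\G_h z$; neither $z-\G_h z$ nor $\J_h\I_h z-z$ alone is $L^2$-orthogonal to $P_k(\Mcal)$.
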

\begin{proof}
	By Hahn-Banach theorem, there exists $G \in H^{-s}(\Omega)$ with $\|G\|_{H^{-s}(\Omega)} = 1$ and $\|u - \J_h u_h\|_{H^s(\Omega)} = G(u - \J_h u_h)$. 
	Let $z \in V$ denote the solution to $\Delta^2 z = G$. The proof departs from the split
	\begin{align*}
		\|u - \J_h u_h\|_{H^s(\Omega)} = G(u - \J_h u_h) = a(z - \J_h \I_h z, u - \J_h u_h) + a(\J_h \I_h z, u - \J_h u_h).
	\end{align*}
	Since $(\Delta_h \I_h z, \Delta \J_h u_h)_{L^2(\Omega)} = (\Delta_h \I_h z, \Delta_h u_h)_{L^2(\Omega)}$ from \Cref{lem:d-consistency-WG},
	the second term is equal to
	\begin{align*}
		a(\J_h \I_h z, \J_h u_h) = (\Delta \J_h \I_h z - \Delta_h \I_h z, \Delta \J_h u_h)_{L^2(\Omega)} + (\Delta_h \I_h z, \Delta_h u_h)_{L^2(\Omega)}.
	\end{align*}
	The identity $(\Delta_h \I_h z, \Delta_h u_h)_{L^2(\Omega)} = (f, \G_h z)_{L^2(\Omega)} - \s_h(\I_h z, u_h)$ from \eqref{def:dis-problem},
	the two previously displayed formula, and \eqref{def:cont-problem} provide
	\begin{align}\label{eq:pr-Hs-estimates}
		\|u - \J_h u_h\|_{H^s(\Omega)} &= a(z - \J_h \I_h z, u - \J_h u_h) + (f, \J_h\I_h z - \G_h z)_{L^2(\Omega)}\nonumber\\
		&\quad- (\Delta \J_h \I_h z - \Delta_h \I_h z, \Delta \J_h u_h)_{L^2(\Omega)} + \s_h(\I_h z, u_h).
	\end{align}
	The terms on the right-hand side can be bounded one by one as follows. The triangle inequality $\|\Delta(u - \J_h u_h)\| \leq \|\Delta_\pw(u - u_\Mcal)\| + \|\Delta_\pw(u_\Mcal - \J_h u_h)\|$, \Cref{lem:smoothing-operator-WG}(ii), and \Cref{lem:quasi-int-WG} imply
	\begin{align*}
		a(z - \J_h \I_h z, u - \J_h u_h) \lesssim \|\D^2_\pw(z - \G_h z)\| \big(\|\D^2_\pw(u - u_\Mcal)\| + |u_h|_\s\big).
	\end{align*}
	From \Cref{lem:smoothing-operator-WG}, the Poincar\'e inequality, and \Cref{lem:quasi-opt-stab-WG}, we deduce that
	\begin{align*}
		(f, \J_h\I_h z - \G_h z)_{L^2(\Omega)} \lesssim \osc(f,\Mcal)\|\D^2_\pw(z - \G_h z)\|.
	\end{align*}
	Since $(\Delta \J_h \I_h z - \Delta_h \I_h z, \Delta \J_h u_h)_{L^2(\Omega)} = (\Delta \J_h \I_h z - \Delta_h \I_h z, \Delta_\pw (\J_h u_h - u_\Mcal))_{L^2(\Omega)}$ from \Cref{lem:d-consistency-WG}, \Cref{lem:equiv-norm-WG}--\ref{lem:smoothing-operator-WG} and \Cref{lem:quasi-opt-stab-WG} imply
	\begin{align*}
		-(\Delta \J_h \I_h z - \Delta_h \I_h z, \Delta \J_h u_h)_{L^2(\Omega)} \lesssim \|\D^2_\pw(z - \G_h z)\| |u_h|_\s.
	\end{align*}
	In combination with $\s_h(\I_h z, u_h) \leq |\I_h z|_\s |u_h|_\s$, \Cref{lem:quasi-opt-stab-WG}, \Cref{thm:a-priori-WG}, and \eqref{ineq:elliptic-regularity}--\eqref{eq:pr-Hs-estimates} yield
	\begin{align}\label{ineq:pr-Hs-u-Juh}
		\|u - \J_h u_h\| &\lesssim \|\D^2_\pw(z - \G_h z)\|\big(\|\D^2_\pw(u - \G_h u)\| + \osc(f,\Mcal)\big)\nonumber\\
		&\lesssim h^{\min\{\delta,k-1\}}_{\max}\big(\|\D^2_\pw(u - \G_h u)\| + \osc(f,\Mcal)\big).
	\end{align}
	To conclude the proof, we can proceed as in the proof of \cite[Theorem 4.1]{CarstensenNataraj2021}.
	Standard interpolation estimates and scaling arguments imply
	\begin{align*}
		\|u_\Mcal - \J_h u_h\|_{H^s(\mathcal{M})} \lesssim h_\mathrm{max}^{\lceil s \rceil - s}\big(\|h_\Mcal^{-1}(u_\Mcal - \J_h u_h)\|_{H^{\lceil s \rceil - 1}(\Mcal)} + \|u_\Mcal - \J_h u_h\|_{H^{\lceil s \rceil}(\Mcal)}\big).
	\end{align*}
	This, \Cref{lem:smoothing-operator-WG}(ii), and \Cref{thm:a-priori-WG} provide
	\begin{align*}
		\|u_\Mcal - \J_h u_h\|_{H^s(\mathcal{M})} \lesssim h_\mathrm{max}^{2-s}|u_h|_\s \lesssim h_\mathrm{max}^{2-s} \big(\|\nabla_\pw(u - \G_h u)\| + \osc(f,\Mcal)\big).
	\end{align*}
	Since $\delta \leq 2 - s$ by assumption, the assertion follows from this, \eqref{ineq:pr-Hs-u-Juh}, and a triangle inequality.
\end{proof}

\section{Discontinuous Galerkin}\label{sec:DG}
This section derives quasi-optimal error estimates for the symmetric and nonsymmetric interior penalty DG FEM \cite{MozolevskiSueli2003,GeorgoulisHouston2009}.
\subsection{Discrete problem}
We consider the discrete problem \eqref{def:dis-problem} with the ansatz space $V_h \coloneqq P_k(\Mcal)$, $k \geq 2$, and
\begin{align*}
	a_h(u_h, v_h) &\coloneqq (\Delta_\pw u_h, \Delta_\pw v_h)_{L^2(\Omega)} + \sigma\s_h(u_h, v_h)\\
	&\quad + \sum_{S \in \Sigma} \int_S \big(\theta[u_h]_S\{\nabla_\pw \Delta_\pw v_h\}_S \cdot \nu_S + [v_h]_S\{\nabla_\pw \Delta_\pw u_h\}_S \cdot \nu_S\\
	&\quad\quad\quad - \theta\{\Delta_\pw v_h\}_S[\nabla_\pw u_h]_S \cdot \nu_S - \{\Delta_\pw u_h\}_S[\nabla_\pw v_h]_S \cdot \nu_S\big) \d{s},\\
	\s_h(u_h,v_h) &\coloneqq \sum_{S \in \Sigma} \int_S \big(h_S^{-3}[u_h]_S[v_h]_S + h_S^{-1}[\nabla_\pw u_h]_S \cdot \nu_S [\nabla_\pw v_h]_S \cdot \nu_S\big) \d{s},\\
	F_h(v_h) &\coloneqq (f,v_h)_{L^2(\Omega)}
\end{align*}
for $u_h, v_h \in V_h$ and a positive stabilization parameter $\sigma > 0$.
The choice $\theta = 1$ leads to the symmetric interior penalty (SIP) DG FEM \cite{GeorgoulisHouston2009}, while, for $\theta = -1$, we obtain the nonsymmetric interior penalty (NIP) DG FEM of \cite{MozolevskiSueli2003}. The coercivity \eqref{ineq:coercivity} is satisfied for sufficiently large stabilization parameter $\sigma$ if $\theta = 1$ and unconditionally if $\theta = -1$ with the discrete norm
\begin{align*}
	\|v_h\|_h^2 \coloneqq \|\Delta_\pw v_h\|^2 + |v_h|_\s^2 \quad\text{for any } v_h \in V_h.
\end{align*}
The discrete Laplacian $\Delta_h v_h \in P_{k-2}(\Mcal)$ of $v_h \in V_h$ is the unique solution to
\begin{align}\label{def:Delta-h}
	&(\Delta_h v_h, \phi)_{L^2(\Omega)} = (v_h, \Delta_\pw \phi)_{L^2(\Omega)}\nonumber\\
	&\qquad + \sum_{S \in \Sigma(\Omega)} \int_S \big(\{\nabla_\pw v_h\}_S \cdot \nu_S [\phi]_S - \{v_h\}_S [\nabla_\pw \phi]_S \cdot \nu_S\big) \d{s}.
\end{align}
An integration by parts provides
\begin{align}\label{eq:Delta-h-ibp}
	&(\Delta_h v_h, \phi)_{L^2(\Omega)} = (\Delta_\pw v_h, \phi)_{L^2(\Omega)}\nonumber\\
	&\qquad - \sum_{S \in \Sigma}
	\int_S \big([\nabla_\pw v_h]_S \cdot \nu_S \{\phi\}_S - [v_h]_S \{\nabla_\pw \phi\}_S \cdot \nu_S\big) \d{s}. 
\end{align}
and so, elementary calculations lead to
\begin{align*}
	&(\Delta_h u_h, \Delta_h v_h)_{L^2(\Omega)} = (\Delta_\pw u_h, \Delta_\pw v_h)_{L^2(\Omega)}\\
	&\qquad- \sum_{S \in \Sigma} \int_S \big([\nabla_\pw u_h]_S \cdot \nu_S \{\Delta_h v_h\}_S - [u_h]_S \{\nabla_\pw \Delta_h v_h\}_S \cdot \nu_S\\
	&\qquad\qquad\qquad + [\nabla_\pw v_h]_S \cdot \nu_S \{\Delta_\pw u_h\}_S - [v_h]_S \{\nabla_\pw \Delta_\pw u_h\}_S \cdot \nu_S\big) \d{s}.
\end{align*}
Therefore, $a_h$ can be rewritten as
\begin{align*}
	a_h(u_h,v_h) = (\Delta_h u_h, \Delta_h v_h)_{L^2(\Omega)} + b_h(u_h,v_h) + \sigma\s_h(u_h,v_h)
\end{align*}
with the bilinear form
\begin{align}\label{def:b-h-4th}
	b_h(u_h,v_h) &\coloneqq \sum_{S \in \Sigma} \int_S \big(\theta[u_h]_S\{\nabla_\pw \Delta_\pw v_h\}_S \cdot \nu_S - [u_h]_S\{\nabla_\pw \Delta_h v_h\}_S \cdot \nu_S\nonumber\\
	&\qquad\qquad - \theta\{\Delta_\pw v_h\}_S[\nabla_\pw u_h]_S \cdot \nu_S - \{\Delta_h v_h\}_S [\nabla_\pw u_h]_S \cdot \nu_S\big) \d{s}.
\end{align}
This equivalent formulation is the crucial ingredient that allows for an extension of the analysis of \Cref{sec:WG} to the DG methodology.
\begin{lemma}[equivalence of norms]\label{lem:equiv-norm-DG}
	Any $v_h \in V_h$ satisfies
	\begin{align*}
		\|\Delta_h v_h - \Delta_\pw v_h\| \lesssim |v_h|_\s.
	\end{align*}
	In particular, the norms $\|\bullet\|_h$ and $\big(\|\Delta_h \bullet\|^2 + |\bullet|_\s^2\big)^{1/2}$ are equivalent.
\end{lemma}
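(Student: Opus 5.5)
The plan is to mirror the proof of \Cref{lem:equiv-norm-WG}, this time starting from the integrated-by-parts identity \eqref{eq:Delta-h-ibp}. Set $\phi \coloneqq \Delta_h v_h - \Delta_\pw v_h \in P_{k-2}(\Mcal)$; this is an admissible test function because $\Delta_\pw v_h \in P_{k-2}(\Mcal)$. Then \eqref{eq:Delta-h-ibp} gives
\begin{align*}
	\|\Delta_h v_h - \Delta_\pw v_h\|^2 = -\sum_{S \in \Sigma} \int_S \big([\nabla_\pw v_h]_S \cdot \nu_S \{\phi\}_S - [v_h]_S \{\nabla_\pw \phi\}_S \cdot \nu_S\big) \d{s}.
\end{align*}
On each face $S$ I would apply the Cauchy inequality with the weights $h_S^{-1/2}$, $h_S^{1/2}$ for the first term and $h_S^{-3/2}$, $h_S^{3/2}$ for the second. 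This bounds the right-hand side by
\begin{align*}
	|v_h|_\s \Big(\sum_{S \in \Sigma} \big(h_S\|\{\phi\}_S\|_S^2 + h_S^3\|\{\nabla_\pw \phi\}_S\|_S^2\big)\Big)^{1/2}.
\end{align*}

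Next I control the averages by the discrete trace inequality together with the inverse estimate for polynomials on the polytopal mesh, with constants depending only on $\vartheta$ and $k$. For each face $S \subset \partial K$ with $h_S \approx h_K$, this gives $h_S\|\phi|_K\|_S^2 \lesssim \|\phi\|_K^2$ and $h_S^3\|\nabla\phi|_K\|_S^2 \lesssim h_K^2\|\nabla \phi\|_K^2 \lesssim \|\phi\|_K^2$. Since the number of faces per cell is bounded in terms of $\vartheta$, summing over the faces yields a bound $\lesssim \|\phi\|$. Dividing by $\|\phi\|$ then proves $\|\Delta_h v_h - \Delta_\pw v_h\| \lesssim |v_h|_\s$.

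The norm equivalence follows directly from the triangle inequality in both directions:
\begin{align*}
	\|\Delta_h v_h\| \leq \|\Delta_\pw v_h\| + C|v_h|_\s \quad\text{and}\quad \|\Delta_\pw v_h\| \leq \|\Delta_h v_h\| + C|v_h|_\s.
\end{align*}
Adding $|v_h|_\s^2$ to the squares shows that $\|\bullet\|_h$ and $\big(\|\Delta_h \bullet\|^2 + |\bullet|_\s^2\big)^{1/2}$ are equivalent.

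The only step that needs care is the scaling of the trace and inverse estimates on general polytopes, in particular the relation $h_S \approx h_K$. I would handle it through the matching simplicial submesh $\Tcal$, exactly as in \Cref{lem:equiv-norm-WG}. Boundary faces need no separate treatment, because there the average reduces to the trace of $\phi$.
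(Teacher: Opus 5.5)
Your proof is correct and follows the paper's argument: you test \eqref{eq:Delta-h-ibp} with $\phi = \Delta_h v_h - \Delta_\pw v_h \in P_{k-2}(\Mcal)$, apply the weighted Cauchy inequality, and bound the face averages by the discrete trace inequality, after which the norm equivalence follows from the triangle inequality. The only addition is that you state the inverse estimate needed for the $h_S^3\|\{\nabla_\pw\phi\}_S\|_S^2$ term, which the paper's one-line proof leaves implicit.
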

\begin{proof}
	The assertion follows immediately from \eqref{eq:Delta-h-ibp}, the Cauchy, and the discrete trace inequality.
\end{proof}
\subsection{Smoothing operator}
Based on the discrete Laplacian $\Delta_h$ in \eqref{def:Delta-h}, we propose the following smoothing operator for discrete consistency.
\begin{lemma}[smoothing operator]\label{lem:smoothing-operator-DG}
	There exists a linear bounded operator $\J_h : V_h \to V$ such that any $v_h \in V_h$ satisfies the following properties.
	\begin{enumerate}[wide]
		\item[(i)] (consistent weights) 
		$\Pi_\Mcal^k \J_h v_h = v_h$, $\Pi_\Sigma^k \J_h v_h = \{v_h\}_\Sigma$, $\Pi_\Sigma^{k-1} \nabla \J_h v_h \cdot \nu_\Sigma = \{\nabla_\pw v_h\}_\Sigma \cdot \nu_\Sigma$.
		\item[(ii)] (local approximation property) It holds
		\begin{align*}
			\|h_\Mcal^{-2}(v_\Mcal - \J_h v_h)\| + \|h_\Mcal \nabla_\pw(v_\Mcal - \J_h v_h)\| + \|\D_\pw^2(v_\Mcal - \J_h v_h)\| \lesssim |v_h|_\s.
		\end{align*}
	\end{enumerate}
\end{lemma}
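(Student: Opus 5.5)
We apply \Cref{thm:smoothing} as in the proof of \Cref{lem:smoothing-operator-WG}, now with the DG data. Given $v_h \in V_h = P_k(\Mcal)$, we choose
\begin{align*}
	w = w_\Mcal = v_h, \qquad w_\Sigma = \{v_h\}_\Sigma, \qquad \delta_\Sigma = \{\nabla_\pw v_h\}_\Sigma \cdot \nu_\Sigma,
\end{align*}
with $\ell = m = k$ and $n = k-1$. We then set $\J_h v_h \coloneqq v$. On boundary faces, $w_\Sigma$ and $\delta_\Sigma$ are set to zero, so that $(w_\Mcal, w_\Sigma, \delta_\Sigma) \in W_h$; on interior faces we use $\{v_h\}_S \in P_k(S)$ and $\{\nabla_\pw v_h\}_S \cdot \nu_S \in P_{k-1}(S)$. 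The map is linear by the remark on linearity following \Cref{thm:smoothing}.

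Property (i) then follows directly from \Cref{thm:smoothing}(a) on interior faces. On boundary faces it holds because $v \in V = H^2_0(\Omega)$, so $v$ and $\nabla v \cdot \nu$ vanish there. This is consistent with the convention $P_m(\Sigma(\Omega))$, under which the boundary components of the weights are zero.

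For (ii) we apply \Cref{thm:smoothing}(b) with $w = v_h$ and sum over $K \in \Mcal$. The first group of terms, $h_S^{-3}\|[v_h]_S\|_S^2 + h_S^{-1}\|[\nabla_\pw v_h]_S\cdot\nu_S\|_S^2$, is exactly $|v_h|_\s^2$, up to the finite overlap of neighbouring faces; the overlap is controlled by the mesh regularity $\vartheta$. The cell term $\Pi_K^k(w - w_\Mcal)$ vanishes. For the face terms with $S \in \Sigma(K)$ interior and $K = K_\pm$, we have $v_h|_K - \{v_h\}_S = \pm\tfrac12[v_h]_S$, and likewise $\nabla v_h|_K \cdot \nu_S - \{\nabla_\pw v_h\}_S\cdot\nu_S = \pm\tfrac12[\nabla_\pw v_h]_S\cdot\nu_S$. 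On boundary faces, $w_\Sigma = \delta_\Sigma = 0$ and the traces of $v_h$ coincide with the jumps by the convention for $[\bullet]_S$. Since $L^2$ projections are non-expansive, every face contribution is bounded by $|v_h|_\s^2$.

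No genuine obstacle is expected. The step that needs the most care is the bookkeeping on boundary faces, namely checking that zero boundary weights together with jumps defined as traces make all terms match $\s_h$. The middle term in (ii) is stated with $h_\Mcal$ rather than $h_\Mcal^{-1}$; \Cref{thm:smoothing}(b) gives the stronger $h_\Mcal^{-1}$ bound, which implies the stated one because $h_\Mcal \lesssim 1$ on the bounded domain $\Omega$. Finally, boundedness of $\J_h$ follows from (ii), the triangle inequality, and $\|\D^2_\pw v_h\| \lesssim \|v_h\|_h$; the latter is the DG analogue of the bound used in \Cref{lem:smoothing-operator-WG}. Alternatively, boundedness follows from finite dimensionality.
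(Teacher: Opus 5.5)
Your proposal is correct and is exactly the paper's argument: the paper's proof is the single line invoking \Cref{thm:smoothing} with $w=w_\Mcal=v_h$, $w_\Sigma=\{v_h\}_\Sigma$, $\delta_\Sigma=\{\nabla_\pw v_h\}_\Sigma\cdot\nu_\Sigma$, $\ell=m=k$, $n=k-1$, and your added bookkeeping is sound (reading (i) on interior faces with zero boundary weights, the half-jump identities, the $h_\Mcal$ versus $h_\Mcal^{-1}$ typo). One remark on boundedness: the finite-dimensionality alternative only gives an $h$-dependent constant, which is useless for the later estimates, whereas the uniform bound follows from $\|\D^2 \J_h v_h\| = \|\Delta \J_h v_h\| \le \|\Delta_\pw v_h\| + \sqrt{d}\,\|\D^2_\pw(\J_h v_h - v_h)\| \lesssim \|v_h\|_h$ (Miranda--Talenti in $H^2_0(\Omega)$ plus (ii)), and the same estimate also justifies the discrete bound $\|\D^2_\pw v_h\| \lesssim \|v_h\|_h$ that you quoted without proof.
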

\begin{proof}
	The assertion follows from \Cref{thm:smoothing} with $w = v_h$, $w_\Mcal = v_h$, $w_\Sigma = \{v_h\}_\Sigma$, $\delta_\Sigma = \{\nabla_\pw v_h\}_\Sigma \cdot \nu_\Sigma$, $\ell = m = k$, and $n = k - 1$.
\end{proof}
The weights in \Cref{lem:smoothing-operator-DG}(i) imply the following.
\begin{lemma}[discrete consistency]\label{lem:d-consistency-DG}
	Any $v_h \in V_h$ and $\phi \in P_{k-2}(\Mcal)$ satisfy
	\begin{align*}
		(\Delta \J_h v_h - \Delta_h v_h, \phi)_{L^2(\Omega)} = 0.
	\end{align*}
\end{lemma}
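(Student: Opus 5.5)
The argument mirrors the proof of \Cref{lem:d-consistency-WG}. Fix $v_h \in V_h$ and $\phi \in P_{k-2}(\Mcal)$, and set $v \coloneqq \J_h v_h \in V = H^2_0(\Omega)$. Since $v$ is globally $H^2$ with vanishing trace and normal derivative on $\partial\Omega$, I integrate by parts twice on each cell $K \in \Mcal$ and sum. This gives
\begin{align*}
	(\Delta v, \phi)_{L^2(\Omega)} = (v, \Delta_\pw \phi)_{L^2(\Omega)} + \sum_{S \in \Sigma(\Omega)} \int_S \big(\nabla v \cdot \nu_S [\phi]_S - v [\nabla_\pw \phi]_S \cdot \nu_S\big) \d{s}.
\end{align*}
Here the continuity of $v$ and $\nabla v$ across interior faces lets the two one-sided contributions of each face combine into jumps of $\phi$ and $\nabla_\pw\phi$. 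The boundary faces drop out because $v = \nabla v \cdot \nu = 0$ on $\partial\Omega$.

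Next I subtract the definition \eqref{def:Delta-h} of $\Delta_h v_h$. This yields
\begin{align*}
	(\Delta v - \Delta_h v_h, \phi)_{L^2(\Omega)} &= (v - v_h, \Delta_\pw \phi)_{L^2(\Omega)}\\
	&\quad + \sum_{S \in \Sigma(\Omega)} \int_S \big((\nabla v \cdot \nu_S - \{\nabla_\pw v_h\}_S \cdot \nu_S)[\phi]_S - (v - \{v_h\}_S)[\nabla_\pw \phi]_S \cdot \nu_S\big) \d{s}.
\end{align*}
Each term vanishes by \Cref{lem:smoothing-operator-DG}(i):
\begin{itemize}
	\item The volume term vanishes because $\Delta_\pw \phi \in P_{k-4}(\Mcal) \subset P_k(\Mcal)$ and $\Pi_\Mcal^k v = v_h$.
	\item The second face term vanishes because $[\nabla_\pw\phi]_S \cdot \nu_S \in P_{k-3}(S) \subset P_k(S)$ and $\Pi_\Sigma^k v = \{v_h\}_\Sigma$.
	\item The first face term vanishes because $[\phi]_S \in P_{k-2}(S) \subset P_{k-1}(S)$ and $\Pi_\Sigma^{k-1}\nabla v \cdot \nu_\Sigma = \{\nabla_\pw v_h\}_\Sigma\cdot\nu_\Sigma$.
\end{itemize}

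The only point needing care is the bookkeeping of signs and orientations in the cellwise integration by parts. Because $\nu_K \cdot \nu_S = \pm 1$ according to whether $K = K_\pm$, the one-sided traces of $\phi$ and $\nabla_\pw \phi$ assemble exactly into the jumps $[\cdot]_S$ as they appear in \eqref{def:Delta-h}. Beyond that, the degree checks above are elementary, so no real obstacle is expected.
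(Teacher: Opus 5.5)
Your proof is correct and takes the same approach as the paper. You integrate by parts cellwise, using $\J_h v_h \in H^2_0(\Omega)$ to assemble one-sided traces into jumps on interior faces and to drop boundary faces, then subtract \eqref{def:Delta-h}, and the remaining volume term and both interior-face terms vanish by the weights in \Cref{lem:smoothing-operator-DG}(i); your explicit degree checks ($\Delta_\pw\phi\in P_{k-4}$, $[\nabla_\pw\phi]_S\cdot\nu_S\in P_{k-3}$, $[\phi]_S\in P_{k-2}$) simply spell out what the paper leaves implicit.
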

\begin{proof}
	An integration by parts and the definition of $\Delta_h$ in \eqref{def:Delta-h} prove that $(\Delta \J_h v_h - \Delta_h v_h, \phi)_{L^2(\Omega)}$ is equal to
	\begin{align*}
		&(\J_h v_h - v_h, \Delta_\pw \phi)_{L^2(\Omega)}\\
		&\quad + \sum_{S \in \Sigma(\Omega)} \int_S \big((\nabla \J_h v_h - \{\nabla_\pw v_h\}_S) \cdot \nu_S [\phi]_S - (\J_h v_h - \{v_h\}_S) [\nabla_\pw \phi]_S \cdot \nu_S\big) \d{s}.
	\end{align*}
	This vanishes due to
	the weights prescribed in \Cref{lem:smoothing-operator-DG}(i).
\end{proof}

\subsection{Interpolation}
We consider the interpolation $\I_h \coloneqq \G_h$ with the Galerkin projection from \eqref{def:Galerkin}.
Together with the smoothing operator defined in \Cref{lem:smoothing-operator-DG}, it is straight-forward to verify the following preliminary results corresponding to \Cref{lem:quasi-opt-stab-WG}--\ref{lem:quasi-int-WG}.

\begin{lemma}[quasi-optimality of stabilization]\label{lem:quasi-opt-stab-DG}
	Any $v \in V$ satisfies
	\begin{align*}
		|\I_h v|_\s \lesssim \|\D_\pw^2(v - \I_h v)\|.
	\end{align*}
\end{lemma}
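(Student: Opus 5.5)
The plan is to use that $v \in V = H^2_0(\Omega)$ has no jumps. Because $v$ and $\nabla v$ have vanishing jumps on interior faces and vanishing traces on boundary faces, every $S \in \Sigma$ satisfies
\begin{align*}
	[\I_h v]_S = [\G_h v - v]_S, \qquad [\nabla_\pw \I_h v]_S \cdot \nu_S = [\nabla_\pw(\G_h v - v)]_S \cdot \nu_S.
\end{align*}
Substituting into the definition of $\s_h$ and using the triangle inequality on each jump (one or two adjacent cells) gives
\begin{align*}
	|\I_h v|_\s^2 \lesssim \sum_{K \in \Mcal} \sum_{S \in \Sigma(K)} \big(h_S^{-3}\|(v - \G_h v)|_K\|_S^2 + h_S^{-1}\|\nabla (v - \G_h v)|_K\|_S^2\big).
\end{align*}
This is the same quantity that appears in the proof of \Cref{lem:quasi-opt-stab-WG}.

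Next I apply, on each cell $K$ and with $g \coloneqq (v - \G_h v)|_K \in H^2(K)$, the continuous trace inequality $\|g\|_{\partial K}^2 \lesssim h_K^{-1}\|g\|_K^2 + h_K\|\nabla g\|_K^2$. Its constant depends only on $\vartheta$, via the simplicial submesh $\Tcal$, and $h_S \approx h_K$ for $S \in \Sigma(K)$. Applying it to $g$ and to $\nabla g$ gives
\begin{align*}
	h_K^{-3}\|g\|_{\partial K}^2 + h_K^{-1}\|\nabla g\|_{\partial K}^2 \lesssim h_K^{-4}\|g\|_K^2 + h_K^{-2}\|\nabla g\|_K^2 + \|\D^2 g\|_K^2 .
\end{align*}

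The main step is to control the lower-order terms by $\|\D^2 g\|_K$, and this is where the second condition in \eqref{def:Galerkin} is used. Testing with $p \in P_1(\Mcal)$ supported on $K$ gives $\Pi_K^1 g = 0$, so in particular $\int_K g \d{x} = 0$. Since $\nabla g - \Pi_K^0 \nabla g$ has zero mean on $K$, the Poincar\'e inequality on $K$ yields $\|\nabla g - \Pi_K^0\nabla g\|_K \lesssim h_K\|\D^2 g\|_K$.

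The constant vector $\Pi_K^0 \nabla g$ still has to be bounded. Write $g = g_1 + (\Pi_K^0\nabla g)\cdot(x - x_K)$ with $x_K$ the barycentre of $K$, so that $\nabla g_1$ has mean zero. Testing $\Pi_K^1 g = 0$ against the affine function $(\Pi_K^0\nabla g)\cdot(x-x_K)$ gives
\begin{align*}
	\|(\Pi_K^0\nabla g)\cdot(x-x_K)\|_K^2 = -(g_1, (\Pi_K^0\nabla g)\cdot(x-x_K))_{L^2(K)},
\end{align*}
so the Cauchy inequality yields $\|(\Pi_K^0\nabla g)\cdot(x-x_K)\|_K \leq \|g_1 - \Pi_K^0 g_1\|_K$. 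This uses $\Pi_K^0 g = 0$ and that $(x - x_K)$ has zero mean. The right-hand side is bounded by $h_K\|\nabla g_1\|_K \lesssim h_K^2\|\D^2 g\|_K$. On shape-regular cells, $|\Pi_K^0\nabla g|\,|K|^{1/2} h_K \approx \|(\Pi_K^0\nabla g)\cdot(x-x_K)\|_K$, hence $h_K\|\Pi_K^0\nabla g\|_K \lesssim h_K^2\|\D^2 g\|_K$. Combining these estimates gives
\begin{align*}
	h_K^{-2}\|g\|_K + h_K^{-1}\|\nabla g\|_K \lesssim \|\D^2 g\|_K,
\end{align*}
which is a local Poincar\'e-type bound for functions $L^2(K)$-orthogonal to $P_1(K)$. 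Summing over $K \in \Mcal$ gives $|\I_h v|_\s \lesssim \|\D^2_\pw(v - \G_h v)\|$. Since $\I_h = \G_h$, this is the assertion.
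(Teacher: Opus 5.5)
Your proof is correct and follows the paper's route. Both use the vanishing jumps of $v \in H^2_0(\Omega)$, the trace inequality on each cell, and a Poincar\'e inequality driven by $\Pi_K^1(v - \G_h v) = 0$ from the second condition in \eqref{def:Galerkin}; the paper compresses that last step into ``the Poincar\'e inequality'', whereas you spell out how the mean $\Pi_K^0\nabla(v - \G_h v)$ is controlled via the affine test function.
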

\begin{proof}
	Since $[v]_S$ and $[\nabla v]_S$ vanish along any $S \in \Sigma$, $[\I_h v]_S = [\I_h v - v]_S$ and $[\nabla_\pw \I_h v]_S = [\nabla_\pw(\I_h v - v)]_S$. This, the trace, and Poincar\'e inequality conclude the proof.
\end{proof}

\begin{lemma}[quasi-optimality of discrete Laplacian]\label{lem:q-opt-dis-Laplace-DG}
	Any $v \in V$ satisfies
	\begin{align*}
		\|\Delta v - \Delta_h \I_h v\| \lesssim \|\D_\pw^2(v - \I_h v)\|.
	\end{align*}
\end{lemma}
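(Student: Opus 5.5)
We mirror the proof of \Cref{lem:quasi-opt-dis-Laplace-WG}. The plan is to insert the piecewise Laplacian of the interpolant via the triangle inequality:
\begin{align*}
	\|\Delta v - \Delta_h \I_h v\| \leq \|\Delta_\pw(v - \I_h v)\| + \|\Delta_\pw \I_h v - \Delta_h \I_h v\|.
\end{align*}
The first term is bounded by $\|\Delta_\pw(v - \I_h v)\| \lesssim \|\D^2_\pw(v - \I_h v)\|$, since the Laplacian is the trace of the Hessian (the constant depends only on $d$). For the second term, \Cref{lem:equiv-norm-DG} applied to $v_h = \I_h v \in V_h$ gives $\|\Delta_h \I_h v - \Delta_\pw \I_h v\| \lesssim |\I_h v|_\s$. \Cref{lem:quasi-opt-stab-DG} then bounds this by $\|\D^2_\pw(v - \I_h v)\|$. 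Combining the two estimates yields the claim.

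The only step that deserves a check is the application of \Cref{lem:equiv-norm-DG}. It rests on \eqref{eq:Delta-h-ibp} tested with $\phi = \Delta_h \I_h v - \Pi_\Mcal^{k-2}\Delta_\pw \I_h v \in P_{k-2}(\Mcal)$. Together with the discrete trace inequality this gives
\begin{align*}
	\|\Delta_h \I_h v - \Pi_\Mcal^{k-2}\Delta_\pw \I_h v\| \lesssim |\I_h v|_\s,
\end{align*}
because the jump terms in \eqref{eq:Delta-h-ibp} are weighted exactly by $h_S^{-1/2}$ and $h_S^{-3/2}$ after the inverse estimates.

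Here $\Delta_\pw \I_h v \in P_{k-2}(\Mcal)$ since $\I_h v \in P_k(\Mcal)$, so the projection $\Pi_\Mcal^{k-2}$ acts as the identity and no additional term arises. Hence no genuine obstacle is expected: all difficulty has already been absorbed in \Cref{lem:equiv-norm-DG} and \Cref{lem:quasi-opt-stab-DG}. The latter uses the vanishing jumps of $v \in H^2_0(\Omega)$, including on boundary faces where the jump is the trace, together with the trace and Poincar\'e inequalities. The Poincar\'e inequality applies to $v - \G_h v$ because \eqref{def:Galerkin} makes it orthogonal to $P_1(\Mcal)$ on each cell.
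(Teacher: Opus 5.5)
Your proposal is correct and is essentially the paper's proof: a triangle inequality, the bound $\|\Delta_\pw(v-\I_h v)\| \lesssim \|\D^2_\pw(v-\I_h v)\|$, \Cref{lem:equiv-norm-DG} applied to $\I_h v$, and \Cref{lem:quasi-opt-stab-DG}. Your extra checks are also sound, namely testing \eqref{eq:Delta-h-ibp} with $\Delta_h \I_h v - \Delta_\pw \I_h v \in P_{k-2}(\Mcal)$ and the cellwise $P_1$-orthogonality of $v - \G_h v$ that justifies the Poincar\'e inequality.
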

\begin{proof}
	The assertion follows from a triangle inequality, \Cref{lem:equiv-norm-DG}, \Cref{lem:quasi-opt-stab-DG}, and $\|\Delta_\pw(v - \I_h v)\| \lesssim \|\D^2_\pw(v - \I_h v)\|$.
\end{proof}

\begin{lemma}[conforming quasi-interpolation]\label{lem:q-interpolation-DG}
	Any $v \in V$ satisfies
	\begin{align*}
		\|\Delta(v - \J_h \I_h v)\| = \|\D^2(v - \J_h \I_h v)\| \lesssim \|\D_\pw^2(v - \I_h v)\|.
	\end{align*}
\end{lemma}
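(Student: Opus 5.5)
The plan mirrors the proof of \Cref{lem:quasi-int-WG}. The identity $\|\Delta(v - \J_h \I_h v)\| = \|\D^2(v - \J_h \I_h v)\|$ holds because $v - \J_h \I_h v \in V = H^2_0(\Omega)$. On this space an integration by parts twice (first for smooth compactly supported functions, then by density) gives $\|\Delta \phi\| = \|\D^2 \phi\|$. It therefore remains to bound $\|\D^2(v - \J_h \I_h v)\|$.

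We insert the discrete function $\I_h v = \G_h v \in P_k(\Mcal)$ and apply the triangle inequality piecewise:
\begin{align*}
	\|\D^2(v - \J_h \I_h v)\| \leq \|\D^2_\pw(v - \I_h v)\| + \|\D^2_\pw(\I_h v - \J_h \I_h v)\|.
\end{align*}
The first term is already of the desired form. For the second term, we apply \Cref{lem:smoothing-operator-DG}(ii) with $v_h = \I_h v$; the function $v_\Mcal$ appearing there is $v_h$ itself in the DG setting. The bound in (ii) is summed over all cells, since \Cref{thm:smoothing}(b) is local and each face is touched only by a uniformly bounded number of neighbouring cells (shape regularity via $\vartheta$). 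This yields $\|\D^2_\pw(\I_h v - \J_h \I_h v)\| \lesssim |\I_h v|_\s$.

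The remaining step is to bound $|\I_h v|_\s$ by \Cref{lem:quasi-opt-stab-DG}, which gives $|\I_h v|_\s \lesssim \|\D^2_\pw(v - \I_h v)\|$. Combining the two estimates concludes the proof.

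There is no serious obstacle here. The only point requiring care is that \Cref{lem:quasi-opt-stab-DG} relies on the Poincaré-type control of lower-order terms of $v - \G_h v$ on each cell. This is available because $\G_h$ preserves the $P_1(\Mcal)$ moments by \eqref{def:Galerkin}: the cellwise mean of $v - \G_h v$ vanishes, and so does the mean of its gradient, since $(\nabla w,c)_{L^2(K)}$ for a constant vector $c$ is a $P_1$ moment. Consequently $h_K^{-2}\|v - \G_h v\|_K + h_K^{-1}\|\nabla(v - \G_h v)\|_K \lesssim \|\D^2(v - \G_h v)\|_K$, and the trace inequality then controls the face jumps in $|\I_h v|_\s$.
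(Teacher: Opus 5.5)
Your main argument (triangle inequality through $\I_h v$, \Cref{lem:smoothing-operator-DG}(ii) applied to $v_h = \I_h v$, then \Cref{lem:quasi-opt-stab-DG}) is exactly the paper's proof, and it is correct. You also correctly read $v_\Mcal$ as $v_h$ in the DG version of that lemma.

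One supporting claim in your last paragraph is false. The identity $(\nabla w, c)_{L^2(K)} = \int_{\partial K} w\, c \cdot \nu_K \d{s}$ shows that this is a boundary moment, not a $P_1$ moment over $K$. So the orthogonality in \eqref{def:Galerkin} does not force the cellwise mean of $\nabla(v - \G_h v)$ to vanish. For instance, take $w(x) = 20x^3 - 30x^2 + 12x - 1$ on $K = (0,1)$, viewed as a function of $x_1$ on $(0,1)^d$ for $d = 2,3$. It is $L^2(K)$-orthogonal to $P_1(K)$, yet $\int_K w' \d{x} = w(1) - w(0) = 2$.

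The bound $h_K^{-2}\|w\|_K + h_K^{-1}\|\nabla w\|_K \lesssim \|\D^2 w\|_K$ for $w = (v - \G_h v)|_K$ still holds, but for a different reason:
\begin{enumerate}
\item Pick $p \in P_1(K)$ with $\int_K (w - p)\d{x} = 0$ and $\int_K \nabla(w - p)\d{x} = 0$. Applying the Poincar\'e inequality twice gives $h_K^{-2}\|w - p\|_K + h_K^{-1}\|\nabla(w - p)\|_K \lesssim \|\D^2 w\|_K$.
\item Since $\Pi_K^1 w = 0$, you have $\|p\|_K = \|\Pi_K^1(p - w)\|_K \leq \|w - p\|_K$.
\item An inverse estimate for $\nabla p$ then finishes the argument.
\end{enumerate}

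Because \Cref{lem:quasi-opt-stab-DG} is already available, this does not affect your proof of the lemma itself, but the justification should be corrected.
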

\begin{proof}
	\Cref{lem:smoothing-operator-DG}(ii), \Cref{lem:quasi-opt-stab-DG}, and a triangle inequality imply the assertion.
\end{proof}

The additional term $b_h$ in DG methods can be controlled as follows.
\begin{lemma}[quasi-optimal remainder]\label{lem:q-opt-remainder-DG}
	Any $v \in V$ and $w_h \in V_h$ satisfy
	\begin{align*}
		|b_h(\I_h v, w_h)| \lesssim \|w_h\|_h\|\D^2_\pw(v - \I_h v)\|.
	\end{align*}
\end{lemma}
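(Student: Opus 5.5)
Looking at \eqref{def:b-h-4th} with first argument $\I_h v$, every term of $b_h(\I_h v, w_h)$ carries either the jump $[\I_h v]_S$ or the normal jump $[\nabla_\pw \I_h v]_S \cdot \nu_S$. The second argument $w_h$ enters only through the averages $\{\nabla_\pw \Delta_\pw w_h\}_S$, $\{\nabla_\pw \Delta_h w_h\}_S$, $\{\Delta_\pw w_h\}_S$ and $\{\Delta_h w_h\}_S$, which are traces of piecewise polynomials of degree at most $k-2$ or $k-3$. The plan is to apply a weighted Cauchy inequality face by face. The jump factors carry the weights $h_S^{-3/2}$ and $h_S^{-1/2}$, and the average factors carry the compensating weights $h_S^{3/2}$ and $h_S^{1/2}$. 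Summing over the faces then gives
\begin{align*}
|b_h(\I_h v, w_h)| &\lesssim |\I_h v|_\s \Big(\sum_{S \in \Sigma} \big(h_S^{3}\|\{\nabla_\pw \Delta_\pw w_h\}_S\|_S^2 + h_S^{3}\|\{\nabla_\pw \Delta_h w_h\}_S\|_S^2\\
&\qquad\qquad + h_S\|\{\Delta_\pw w_h\}_S\|_S^2 + h_S\|\{\Delta_h w_h\}_S\|_S^2\big)\Big)^{1/2}.
\end{align*}

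Next I would bound the average terms using the discrete trace inequality $h_S\|p\|_S^2 \lesssim \|p\|_K^2$ for polynomials $p$ on a cell $K$ adjacent to $S$, together with the inverse estimate $h_K\|\nabla p\|_K \lesssim \|p\|_K$. Both depend only on $\vartheta$. Since every cell has a bounded number of faces, the bracketed sum is then bounded by $\|\Delta_\pw w_h\|^2 + \|\Delta_h w_h\|^2$.

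By \Cref{lem:equiv-norm-DG}, this is in turn bounded by $\|w_h\|_h^2$. \Cref{lem:quasi-opt-stab-DG} gives $|\I_h v|_\s \lesssim \|\D^2_\pw(v - \I_h v)\|$, and combining the two bounds yields the assertion.

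The main obstacle is the trace-inverse step on general polytopal cells. Here one needs a discrete trace inequality for polynomials on a face $S \subset \partial K$, and a face of $\Mcal$ may be a union of several faces of the simplicial submesh $\Tcal$. The argument must therefore pass through the submesh: apply the trace inequality on each simplex $T \subset K$ with $h_T \approx h_K$, and use the comparability $h_S \approx h_K$ that the mesh regularity parameter provides. The remaining steps only require keeping track of the powers of $h_S$, which cancel exactly because the scaling of the stabilization $\s_h$ matches the orders of the derivatives in the averages.
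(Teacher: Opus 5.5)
Your proposal is correct and follows the paper's proof essentially step for step. The paper likewise applies a weighted Cauchy inequality in \eqref{def:b-h-4th} to extract $|\I_h v|_\s$, then uses the discrete trace inequality, inverse estimates and \Cref{lem:equiv-norm-DG} to bound the face averages by $\|w_h\|_h$, and closes with \Cref{lem:quasi-opt-stab-DG}. The only difference is cosmetic: you split the averages into four separate terms before the Cauchy inequality, while the paper groups them as $\{\nabla_\pw \Delta_h w_h - \theta\nabla_\pw \Delta_\pw w_h\}_S$ and $\{\Delta_\pw w_h - \theta \Delta_h w_h\}_S$ and applies the triangle inequality afterwards.
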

\begin{proof}
	The Cauchy inequality in \eqref{def:b-h-4th} proves
	\begin{align*}
		|b_h(\I_h v, w_h)| \leq |\I_h v|_{\s} \Big(\sum_{S \in \Sigma} h_S^3\|\{\nabla_\pw \Delta_h w_h - \theta\nabla_\pw \Delta_\pw w_h\}_S\|_S^2&\\
		+ h_S\|\{\Delta_\pw w_h - \theta \Delta_h w_h\}_S\|^2_S&\Big)^{1/2}.
	\end{align*}
	The triangle inequality, the discrete trace inequality, the inverse estimate, the boundedness of $\Delta_h$ from \Cref{lem:equiv-norm-DG}, and \Cref{lem:quasi-opt-stab-DG} conclude the assertion.
\end{proof}

\subsection{Error estimates}
The following results correspond to \eqref{ineq:main_results} and are the analogue of \Cref{thm:a-priori-WG} and \Cref{thm:Hs}.
\begin{theorem}[a~priori]\label{thm:a-priori-DG}
	It holds
	\begin{align*}
		\|\D^2_\pw(u - u_h)\| + \|\I_h u - u_h\|_h + |u_h|_\s \lesssim \|\D^2_\pw(u - \I_h u)\| + \osc(f,\Mcal).
	\end{align*}
\end{theorem}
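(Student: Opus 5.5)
The plan mirrors the proof of \Cref{thm:a-priori-WG}, using the rewritten form $a_h = (\Delta_h\bullet,\Delta_h\bullet)_{L^2(\Omega)} + b_h + \sigma\s_h$. Abbreviate $e_h \coloneqq \I_h u - u_h \in V_h$ and start from the error split \eqref{ineq:err-split}, which is available by the coercivity \eqref{ineq:coercivity}. For the data term, \Cref{lem:smoothing-operator-DG}(i) gives $\Pi_\Mcal^k \J_h e_h = e_h$. Hence
\[
F(\J_h e_h) - F_h(e_h) = ((1-\Pi_\Mcal^k)f, \J_h e_h - e_h)_{L^2(\Omega)}.
\]
\Cref{lem:smoothing-operator-DG}(ii) together with \Cref{lem:equiv-norm-DG} bounds this by $\osc(f,\Mcal)\|e_h\|_h$, where $\osc(f,\Mcal)=\|h_\Mcal^2(1-\Pi_\Mcal^k)f\|$ as before.

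For the consistency term, \Cref{lem:d-consistency-DG} gives $(\Delta_h \I_h u, \Delta_h e_h)_{L^2(\Omega)} = (\Delta_h \I_h u, \Delta \J_h e_h)_{L^2(\Omega)}$, because $\Delta_h\I_h u \in P_{k-2}(\Mcal)$. Therefore
\[
a_h(\I_h u, e_h) - a(u,\J_h e_h) = (\Delta_h\I_h u - \Delta u, \Delta\J_h e_h)_{L^2(\Omega)} + b_h(\I_h u, e_h) + \sigma\s_h(\I_h u, e_h).
\]
The three terms are handled as follows.
\begin{itemize}
\item The first term is at most $\|\D^2_\pw(u-\I_h u)\|\,\|e_h\|_h$ by \Cref{lem:q-opt-dis-Laplace-DG} and the boundedness $\|\Delta \J_h e_h\| \lesssim \|e_h\|_h$, which follows from \Cref{lem:smoothing-operator-DG}(ii).
\item The second term is bounded in the same way by \Cref{lem:q-opt-remainder-DG}.
\item The third term is at most $\sigma|\I_h u|_\s|e_h|_\s \lesssim \|\D^2_\pw(u-\I_h u)\|\,\|e_h\|_h$ by \Cref{lem:quasi-opt-stab-DG}.
\end{itemize}
Dividing by $\|e_h\|_h$ yields $\|e_h\|_h \lesssim \|\D^2_\pw(u-\I_h u)\| + \osc(f,\Mcal)$.

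Here the DG case differs from WG. The seminorm $|\cdot|_\s$ is part of $\|\cdot\|_h$, and $u_h = \I_h u - e_h$. So the bound for $|u_h|_\s$ follows from the triangle inequality $|u_h|_\s \le |\I_h u|_\s + |e_h|_\s$ combined with \Cref{lem:quasi-opt-stab-DG}. The polarization identity used for WG is therefore not needed.

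Finally, apply the triangle inequality
\[
\|\D^2_\pw(u-u_h)\| \le \|\D^2_\pw(u-\I_h u)\| + \|\D^2_\pw e_h\|.
\]
The only point requiring care is that $\|\cdot\|_h$ controls $\Delta_\pw$ rather than the full Hessian. To obtain $\|\D^2_\pw e_h\| \lesssim \|e_h\|_h$, insert $\J_h e_h$:
\[
\|\D^2_\pw e_h\| \le \|\D^2_\pw(e_h - \J_h e_h)\| + \|\D^2 \J_h e_h\|.
\]
The first summand is at most $|e_h|_\s$ by \Cref{lem:smoothing-operator-DG}(ii). For the second, $\J_h e_h \in H^2_0(\Omega)$ gives $\|\D^2\J_h e_h\| = \|\Delta \J_h e_h\| \le \|\Delta_\pw e_h\| + \|\Delta_\pw(\J_h e_h - e_h)\| \lesssim \|e_h\|_h$. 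Combining these estimates concludes the proof.
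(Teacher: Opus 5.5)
Your proposal is correct and follows the paper's proof essentially step by step. It uses the same consistency identity from \Cref{lem:d-consistency-DG}, the same three bounds via \Cref{lem:q-opt-dis-Laplace-DG}, \Cref{lem:q-opt-remainder-DG} and \Cref{lem:quasi-opt-stab-DG}, and the same triangle inequality for $|u_h|_\s$. The only difference is cosmetic and in the last step: you split $u-u_h=(u-\I_h u)+e_h$ and show $\|\D^2_\pw e_h\|\lesssim\|e_h\|_h$ through $\J_h e_h\in H^2_0(\Omega)$, whereas the paper inserts $\J_h\I_h u$ and $\J_h u_h$ and invokes \Cref{lem:q-interpolation-DG}; both rest on the same continuity of $\J_h$.
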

\begin{proof}
	Since $(\Delta_h \I_h u, \Delta_h e_h)_{L^2(\Omega)} = (\Delta_h \I_h u, \Delta \J_h e_h)_{L^2(\Omega)}$ from \Cref{lem:d-consistency-DG}, we infer
	\begin{align*}
		a_h(\I_h u, e_h) - a(u,\J_h e_h) = (\Delta_h \I_h u - \Delta u, \Delta \J_h e_h)_{L^2(\Omega)} + b_h(\I_h u, e_h) + \sigma\s_h(\I_h u, e_h).
	\end{align*}
	This, \eqref{ineq:err-split}, \eqref{ineq:pr-a-priori-data-appr}, \Cref{lem:q-opt-dis-Laplace-DG}, \Cref{lem:q-opt-remainder-DG}, and the continuity of $\J_h$ imply
	\begin{align*}
		\|e_h\|^2_h \lesssim \big(\|\D^2_\pw(u - \I_h u)\| + \osc(f,\Mcal) + |\I_h u|_\s\big)\|e_h\|_h.
	\end{align*}
	Dividing by $\|e_h\|_h$ and employing \Cref{lem:quasi-opt-stab-DG} provide
	\begin{align*}
		\|e_h\|_h \lesssim \|\D^2_\pw(u - \I_h u)\| + \osc(f,\Mcal).
	\end{align*}
	This,
	the triangle inequality $|u_h|_\s \leq |e|_\s + |\I_h u|_\s$, and \Cref{lem:quasi-opt-stab-DG} prove 
	\begin{align*}
		|u_h|_\s \lesssim \|\D^2_\pw(u - \I_h u)\| + \osc(f,\Mcal).
	\end{align*}
	The two previously displayed formula also controls $\|\D_\pw^2(u - u_h)\|$ because
	\begin{align*}
		\|\D_\pw^2(u - u_h)\| &\leq \|\D^2(u - \J_h \I_h u)\| + \|\D^2(\J_h \I_h u - \J_h u_h)\| + \|\D^2_\pw(\J_h u_h - u_h)\|\\
		&\lesssim \|\D^2_\pw(u - \I_h u)\| + \|e_h\|_h + |u_h|_\s
	\end{align*}
	from a triangle inequality, \Cref{lem:smoothing-operator-DG}, and \Cref{lem:q-interpolation-DG}.
\end{proof}

\begin{remark}[medius analysis of \cite{Gudi2010}]
	Quasi-optimal error estimates for the SIP DG FEM on simplicial meshes has been established in \cite[Section 4.3]{Gudi2010}.
\end{remark}

The following result states lower-order error estimates for the SIP DG FEM.

\begin{theorem}[$H^s$ error]\label{thm:Hs-DG}
	Suppose that $\theta = 1$ (SIP) and \eqref{ineq:elliptic-regularity} hold. Then
	\begin{align*}
		\|u - u_h\|_{H^s(\Mcal)} + \|u - \J_h u_h\|_{H^s(\Omega)} \lesssim h_\mathrm{max}^{\min\{\delta,k-1\}}\big(\|\D^2_\pw(u - \I_h u)\| + \osc(f,\Mcal)\big).
	\end{align*}
\end{theorem}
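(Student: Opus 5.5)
The plan is to mirror the proof of \Cref{thm:Hs} and use the symmetry of $a_h$ for $\theta = 1$ to handle the extra term $b_h$.

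\textbf{Step 1: error identity.} By Hahn--Banach, choose $G \in H^{-s}(\Omega)$ with $\|G\|_{H^{-s}(\Omega)} = 1$ and $\|u - \J_h u_h\|_{H^s(\Omega)} = G(u - \J_h u_h)$. Let $z \in V$ solve $\Delta^2 z = G$, and split
\[
G(u - \J_h u_h) = a(z - \J_h \I_h z, u - \J_h u_h) + a(\J_h \I_h z, u) - a(\J_h \I_h z, \J_h u_h).
\]
The middle term equals $(f, \J_h \I_h z)_{L^2(\Omega)}$ by \eqref{def:cont-problem}. For the last term, \Cref{lem:d-consistency-DG} gives
\[
a(\J_h \I_h z, \J_h u_h) = (\Delta \J_h \I_h z - \Delta_h \I_h z, \Delta \J_h u_h)_{L^2(\Omega)} + (\Delta_h \I_h z, \Delta_h u_h)_{L^2(\Omega)}.
\]
By the rewritten form of $a_h$,
\[
(\Delta_h \I_h z, \Delta_h u_h)_{L^2(\Omega)} = a_h(\I_h z, u_h) - b_h(\I_h z, u_h) - \sigma \s_h(\I_h z, u_h).
\]
\textbf{Step 2: use symmetry.} For $\theta = 1$, $a_h$ is symmetric, so $a_h(\I_h z, u_h) = a_h(u_h, \I_h z) = F_h(\I_h z) = (f, \I_h z)_{L^2(\Omega)}$. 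This is the only place SIP is needed. For NIP the test and trial roles cannot be swapped, and the inconsistency term would not be of product form. Collecting terms,
\begin{align*}
\|u - \J_h u_h\|_{H^s(\Omega)} &= a(z - \J_h \I_h z, u - \J_h u_h) + (f, \J_h \I_h z - \I_h z)_{L^2(\Omega)}\\
&\quad - (\Delta \J_h \I_h z - \Delta_h \I_h z, \Delta_\pw(\J_h u_h - u_h))_{L^2(\Omega)} + b_h(\I_h z, u_h) + \sigma \s_h(\I_h z, u_h).
\end{align*}
In the third term, $\Delta \J_h u_h$ was replaced by $\Delta_\pw(\J_h u_h - u_h)$. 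This uses \Cref{lem:d-consistency-DG} together with the observation that $\Delta_\pw u_h \in P_{k-2}(\Mcal)$.

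\textbf{Step 3: bound each term by $\|\D^2_\pw(z - \I_h z)\|$ times a quasi-optimal factor.}
\begin{itemize}
\item First term: \Cref{lem:q-interpolation-DG}, \Cref{lem:smoothing-operator-DG}(ii), and \Cref{thm:a-priori-DG}.
\item Second term: \Cref{lem:smoothing-operator-DG}(i) gives $\Pi_\Mcal^k(\J_h \I_h z - \I_h z) = 0$. Hence the term equals $((1 - \Pi_\Mcal^k) f, \J_h \I_h z - \I_h z)_{L^2(\Omega)}$, which is bounded by $\osc(f,\Mcal)\, |\I_h z|_\s$ via (ii) and \Cref{lem:quasi-opt-stab-DG}.
\item Third term: \Cref{lem:q-opt-dis-Laplace-DG} and \Cref{lem:q-interpolation-DG} bound the first factor, while \Cref{lem:smoothing-operator-DG}(ii) bounds $\|\Delta_\pw(\J_h u_h - u_h)\| \lesssim |u_h|_\s$.
\item Fourth and fifth terms: the remaining $b_h$ and $\s_h$ terms are discussed in Step 5.
\end{itemize}

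\textbf{Step 4: regularity.} Elliptic regularity \eqref{ineq:elliptic-regularity} combined with polynomial approximation gives $\|\D^2_\pw(z - \I_h z)\| \lesssim h_{\max}^{\min\{\delta, k-1\}}$.

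\textbf{Step 5: the main obstacle.} The difficulty is the $b_h$ term. \Cref{lem:q-opt-remainder-DG} gives $|b_h(\I_h z, u_h)| \lesssim \|u_h\|_h\, \|\D^2_\pw(z - \I_h z)\|$, but the factor $\|u_h\|_h$ is not small. I would instead revisit its proof. By the Cauchy inequality with the jumps of $\I_h z$, the bound involves $h_S^3 \|\{\nabla_\pw \Delta_h u_h - \nabla_\pw \Delta_\pw u_h\}_S\|_S^2 + h_S \|\{\Delta_\pw u_h - \Delta_h u_h\}_S\|_S^2$. With $\theta = 1$, only differences $\Delta_h u_h - \Delta_\pw u_h$ appear. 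By \Cref{lem:equiv-norm-DG}, inverse estimates, and discrete trace inequalities, these are controlled by $|u_h|_\s$. Therefore
\[
|b_h(\I_h z, u_h)| \lesssim |\I_h z|_\s\, |u_h|_\s.
\]
This is again quasi-optimal via \Cref{lem:quasi-opt-stab-DG} and \Cref{thm:a-priori-DG}, and it is the second place symmetry is essential. The $\s_h$ term is handled directly by the Cauchy inequality.

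\textbf{Step 6: piecewise norm.} Finally, bound $\|u_h - \J_h u_h\|_{H^s(\Mcal)} \lesssim h_{\max}^{2-s} |u_h|_\s$ exactly as in the proof of \Cref{thm:Hs}, and combine with a triangle inequality and $\delta \le 2 - s$.
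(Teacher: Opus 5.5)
Your proposal is correct and follows essentially the paper's route: both mirror the proof of \Cref{thm:Hs}, and the only extra work is the $b_h$ remainder. That remainder is bounded by $|u_h|_\s\,|\I_h z|_\s \lesssim |u_h|_\s\,\|\D^2_\pw(z - \I_h z)\|$ via \Cref{lem:equiv-norm-DG}, which needs $\theta = 1$ so that only differences $\Delta_\pw - \Delta_h$ appear. The paper tests the discrete problem directly with $\I_h z$, i.e.\ it uses $a_h(u_h, \I_h z) = F_h(\I_h z)$ and bounds $b_h(u_h, \I_h z)$, so your symmetry swap in Step 2 is unnecessary and SIP enters only through the structure of $b_h$; since $b_h$ is itself symmetric for $\theta = 1$, the two terms coincide anyway.
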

\begin{proof}
	The assertion follows along the lines of the proof of \Cref{thm:Hs}, but requires a bound on $-b_h(u_h, \I_h z)$ that additionally arises on the right-hand side of \eqref{eq:pr-Hs-estimates}. The Cauchy inequality proves that $|b_h(u_h, \I_h z)|$ is bounded by
	\begin{align*}
		|u_h|_\s \big(\sum_{S \in \Sigma} h_S^3\|\{\nabla_\pw(\Delta_\pw - \Delta_h) \I_h z\}_S \cdot \nu_S\|_S^2 + h_S\|\{(\Delta_\pw - \Delta_h) \I_h z\}_S\|_S^2\big).
	\end{align*}
	The triangle inequality, the discrete trace inequality, and inverse estimates imply $|b_h(u_h, \I_h z)| \lesssim |u_h|_\s\|(\Delta_\pw - \Delta_h) \I_h z\|$. This, \Cref{lem:equiv-norm-DG}, and \Cref{lem:quasi-opt-stab-DG} yield
	\begin{align*}
		|b_h(u_h, \I_h z)| \lesssim |u_h|_\s \|\D^2_\pw(z - \I_h z)\|.
	\end{align*} 
	The right-hand side is quasi-optimal by \Cref{thm:a-priori-DG}.
	Further details on the proof of \Cref{thm:Hs-DG} are omitted for the sake of brevity.
\end{proof}

\begin{remark}[Lower-order error estimates for NIP DG]
	For the NIP DG FEM with $\theta = -1$, the term $|b_h(u_h, \I_h z)| \lesssim |u_h|_\s \|\D^2 z\|$ displays reduced convergence rates, leading to
	\begin{align*}
		\|u - u_h\|_{H^s(\Mcal)} + \|u - \J_h u_h\|_{H^s(\Omega)} \lesssim \|\nabla_\pw(u - \I_h u)\| + \osc(f,\Mcal).
	\end{align*}
	This is consistent with known theoretical results from \cite{ArnoldBrezziCockburnMarini2001} for smooth solutions.
\end{remark}

\section{Hybrid high-order}\label{sec:HHO}
This section derives quasi-optimal error estimates for the HHO FEM of \cite{DongErn2022}. For the sake of brevity, we focus on the three dimensional case $d = 3$ and mention that the arguments carry over to the two dimensional case $d = 2$ as well.

\subsection{Discrete problem}
We consider the ansatz space $V_h \coloneqq P_k(\Mcal) \times P_k(\Sigma(\Omega)) \times P_{k-2}(\Sigma(\Omega))$ with $k \geq 2$. Given $v_h = (v_\Mcal, v_\Sigma, \gamma_\Sigma) \in V_h$, the reconstruction $\mathcal{R}_h v_h \in P_k(\Mcal)$ of $v_h$ is the unique solution to $\Pi_\Mcal^1 \mathcal{R}_h v_h = \Pi_\Mcal^1 v_\Mcal$ and, for any $\phi \in P_k(\Mcal)$,
\begin{align}\label{def:rec-HHO}
	(\D^2_\pw \mathcal{R}_h v_h&, \D^2_\pw \phi)_{L^2(\Omega)} = (v_\Mcal, \Delta^2_\pw \phi)_{L^2(\Omega)}\nonumber\\
	&- \sum_{S \in \Sigma(\Omega)} \int_S \big(v_\Sigma  [\partial_\nu \Delta_\pw \phi]_S - \gamma_\Sigma [\partial_{\nu\nu} \phi]_S - \partial_t v_\Sigma [\partial_{\nu t} \phi]_S\big) \d{s}.
\end{align}
Here and throughout the remaining parts of this section, $\partial_\nu$ and $\partial_t$ denote the normal and tangential derivatives.
The discrete problem seeks the solution $u_h = (u_\Mcal, u_\Sigma, \beta_\Sigma) \in V_h$ to \eqref{def:dis-problem} with
\begin{align*}
	a_h(u_h,v_h) &\coloneqq (\D^2_\pw \mathcal{R}_h u_h, \D^2_\pw \mathcal{R}_h v_h)_{L^2(\Omega)} + \s_h(u_h, v_h),\\
	\s_h(u_h,v_h) &\coloneqq \sum_{K \in \Mcal} \sum_{S \in \Sigma(K)} h_S^{-3} (u_K - u_\Sigma, v_K - v_\Sigma)_{L^2(S)}\\
	&\qquad\qquad + h_S^{-1} (\Pi_S^{k-2}(\nabla u_K \cdot \nu_S - \beta_\Sigma), \Pi_S^{k-2}(\nabla v_K \cdot \nu_S - \gamma_\Sigma))_{L^2(S)},\\
	F_h(v_h) &\coloneqq (f, v_\Mcal)_{L^2(\Omega)}
\end{align*}
for $v_h = (v_\Mcal, v_\Sigma, \gamma_\Sigma) \in V_h$.
The discrete bilinear form $a_h$ defines a scalar product in $V_h$ \cite{DongErn2022} and, therefore, \eqref{ineq:coercivity} holds with equality for the induced norm $\|\bullet\|_h \coloneqq \sqrt{a_h(\bullet,\bullet)}$.
By $|\bullet|_\s \coloneqq \sqrt{\s_h(\bullet,\bullet)}$, we denote the seminorm induced by $\s_h$.

\begin{lemma}[equivalence of norms]\label{lem:equiv-norms-HHO}
	Any $v_h = (v_\Mcal, v_\Sigma, \gamma_\Sigma) \in V_h$ satisfies
	\begin{align*}
		\|\D^2_\pw(\mathcal{R}_h v_h - v_\Mcal)\| \lesssim |v_h|_\s.
	\end{align*}
	In particular, the norms $\|\bullet\|_h$ and $\big(\|\D^2_\pw v_\Mcal\|^2 + |v_h|_\s^2\big)^{1/2}$ are equivalent.
\end{lemma}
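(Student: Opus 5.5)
Test \eqref{def:rec-HHO} with $\phi \in P_k(\Mcal)$ and integrate by parts the volume term $(v_\Mcal, \Delta^2_\pw \phi)_{L^2(\Omega)}$ twice on each cell $K$, so that $(\D^2_\pw v_\Mcal, \D^2_\pw \phi)_{L^2(\Omega)}$ appears together with boundary integrals over $\partial K$. On each face $S \in \Sigma(K)$, split $\nabla v_K$ into its normal and tangential components. This yields the cell-wise identity
\begin{align*}
	&(\D^2_\pw(\mathcal{R}_h v_h - v_\Mcal), \D^2_\pw \phi)_{L^2(\Omega)}\\
	&\qquad = \sum_{K \in \Mcal} \sum_{S \in \Sigma(K)} (\nu_K \cdot \nu_S) \int_S \big((v_K - v_\Sigma)\partial_\nu \Delta \phi|_K - (\nabla v_K \cdot \nu_S - \gamma_\Sigma)\partial_{\nu\nu}\phi|_K\\
	&\qquad\qquad\qquad - \partial_t(v_K - v_\Sigma)\cdot \partial_{\nu t}\phi|_K\big) \d{s},
\end{align*}
with $v_\Sigma = \gamma_\Sigma = 0$ on boundary faces. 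The signs must be checked carefully against the jump convention in \eqref{def:rec-HHO}, in the same way as in \Cref{lem:equiv-norm-WG}.

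The three face terms are then bounded separately. The first is bounded by the Cauchy inequality together with the discrete trace and inverse estimates:
\begin{align*}
	\|v_K - v_\Sigma\|_S \|\partial_\nu\Delta\phi\|_S \lesssim h_S^{-3/2}\|v_K - v_\Sigma\|_S \, \|\D^2\phi\|_K.
\end{align*}
In the second term, $\partial_{\nu\nu}\phi|_K \in P_{k-2}(S)$. Hence $\nabla v_K\cdot\nu_S - \gamma_\Sigma$ can be replaced by its projection $\Pi_S^{k-2}(\nabla v_K\cdot\nu_S - \gamma_\Sigma)$, which is exactly the quantity in $\s_h$. The term is then bounded by $h_S^{-1/2}\|\Pi_S^{k-2}(\cdots)\|_S\|\D^2\phi\|_K$.

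The tangential term is the main obstacle, because $\s_h$ controls $v_K - v_\Sigma$ only in $L^2(S)$, not its tangential gradient. Since $v_K - v_\Sigma \in P_k(S)$ is a polynomial on the face, I would apply an inverse estimate on $S$, namely $\|\partial_t(v_K - v_\Sigma)\|_S \lesssim h_S^{-1}\|v_K - v_\Sigma\|_S$, which needs the face to be shape regular. This can be justified through the simplicial submesh $\Fcal$, whose faces inside $S$ have diameter comparable to $h_S$ by the mesh regularity $\vartheta$. Alternatively, one can integrate by parts along $\partial S$, but that introduces edge terms, so the inverse estimate is preferable. Combined with $\|\partial_{\nu t}\phi\|_S \lesssim h_S^{-1/2}\|\D^2\phi\|_K$, this gives the bound $h_S^{-3/2}\|v_K - v_\Sigma\|_S\|\D^2\phi\|_K$.

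Summing over all cells and faces with the Cauchy inequality gives
\begin{align*}
	(\D^2_\pw(\mathcal{R}_h v_h - v_\Mcal), \D^2_\pw\phi)_{L^2(\Omega)} \lesssim |v_h|_\s \|\D^2_\pw\phi\|.
\end{align*}
Choosing $\phi = \mathcal{R}_h v_h - v_\Mcal \in P_k(\Mcal)$ proves the asserted estimate.

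The norm equivalence then follows from the triangle inequality. In one direction, $\|\D^2_\pw\mathcal{R}_h v_h\| \le \|\D^2_\pw v_\Mcal\| + C|v_h|_\s$. In the other, $\|\D^2_\pw v_\Mcal\| \le \|\D^2_\pw \mathcal{R}_h v_h\| + C|v_h|_\s$. Since $\|v_h\|_h^2 = \|\D^2_\pw\mathcal{R}_h v_h\|^2 + |v_h|_\s^2$, both norms are controlled by each other.
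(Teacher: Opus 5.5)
Your proposal is correct and follows essentially the same route as the paper, which simply cites the piecewise integration-by-parts formula of Dong--Ern together with the discrete trace inequality and inverse estimates. Your version spells these steps out explicitly: the cell-wise face identity, the observation that $\partial_{\nu\nu}\phi|_K \in P_{k-2}(S)$ lets you insert $\Pi_S^{k-2}$ to match $\s_h$, and the face inverse estimate for the tangential term.
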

\begin{proof}
	The assertion follows from a piecewise integration by parts formula \cite[Eq.~(19)]{DongErn2022}, the discrete trace inequality, and inverse estimates, cf.~also \cite[Lemma 4.1]{DongErn2022}.
\end{proof}

\subsection{Smoothing operator}
We propose the following smoother based on \Cref{thm:smoothing} for discrete consistency.
\begin{lemma}[smoothing operator]\label{lem:smoothing-operator-HHO}
	There exists a linear bounded operator $\J_h : V_h \to V$ such that any $v_h = (v_\Mcal, v_\Sigma, \gamma_\Sigma) \in V_h$ satisfies the following properties.
	\begin{enumerate}[wide]
		\item[(i)] (consistent weights) $\Pi_\Mcal^k \J_h v_h = v_\Mcal$, $\Pi_\Sigma^k \J_h v_h = v_\Sigma$, $\Pi_\Sigma^{k-2} \nabla \J_h v_h \cdot \nu_\Sigma = \gamma_\Sigma$.
		\item[(ii)] (approximation property) It holds
		\begin{align*}
			\|h_\Mcal^{-2}(v_\Mcal - \J_h v_h)\|^2 + \|h_\Mcal^{-1} \nabla_\pw(v_\Mcal - \J_h v_h)\|^2 + \|\D_\pw^2(v_\Mcal - \J_h v_h)\|^2&\\
			\lesssim \sum_{S \in \Sigma} h_S^{-1}\|[\nabla_\pw v_\Mcal]_S \cdot \nu_S\|^2_S + |v_h|_\s^2&.
		\end{align*}
	\end{enumerate}
\end{lemma}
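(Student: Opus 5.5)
We apply \Cref{thm:smoothing} with $w = v_\Mcal$, $w_\Mcal = v_\Mcal$, $w_\Sigma = v_\Sigma$, $\delta_\Sigma = \gamma_\Sigma$, $\ell = m = k$ and $n = k-2$, and set $\J_h v_h \coloneqq v$. Linearity follows from the remark after \Cref{thm:smoothing}. Property (i) is then precisely \Cref{thm:smoothing}(a); note that $n = k-2 \geq 0$ because $k \geq 2$.

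For (ii), we sum \Cref{thm:smoothing}(b) over all $K \in \Mcal$. Each face meets only a bounded number of cells, with the bound depending on $\vartheta$. The right-hand side then contains four kinds of terms.

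\begin{itemize}
\item The cell term $h_K^{-4}\|\Pi_K^k(v_\Mcal - v_\Mcal)\|_K^2$ vanishes.
\item The face terms $h_S^{-3}\|\Pi_S^k(v_K - v_\Sigma)\|_S^2 \leq h_S^{-3}\|v_K - v_\Sigma\|_S^2$ are bounded by $|v_h|_\s^2$.
\item The terms $h_S^{-1}\|\Pi_S^{k-2}(\nabla v_K\cdot\nu_S - \gamma_\Sigma)\|_S^2$ appear verbatim in $\s_h$.
\item The value jumps are handled as in the proof of \Cref{lem:smoothing-operator-WG}. On interior faces, $[v_\Mcal]_S = (v_{K_+} - v_\Sigma) - (v_{K_-} - v_\Sigma)$. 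On boundary faces $v_\Sigma = 0$, so $[v_\Mcal]_S = v_{K_+} - v_\Sigma$. Hence $h_S^{-3}\|[v_\Mcal]_S\|_S^2 \lesssim |v_h|_\s^2$.
\end{itemize}

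The remaining terms are the normal-derivative jumps $h_S^{-1}\|[\nabla_\pw v_\Mcal]_S\cdot\nu_S\|_S^2$. In the WG case these were absorbed by the stabilization. Here $\s_h$ only controls the $P_{k-2}$-projection of $\nabla v_K\cdot\nu_S - \gamma_\Sigma$. The jump therefore splits as
\begin{align*}
[\nabla_\pw v_\Mcal]_S\cdot\nu_S = \Pi_S^{k-2}[\nabla_\pw v_\Mcal]_S\cdot\nu_S + (1-\Pi_S^{k-2})[\nabla_\pw v_\Mcal]_S\cdot\nu_S.
\end{align*}
The first part is bounded by $|v_h|_\s$ via the same $\pm\gamma_\Sigma$ trick. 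The second part is the component of degree $k-1$ that the stabilization cannot see.

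This is the only real obstacle, and it is why the statement keeps $\sum_{S \in \Sigma} h_S^{-1}\|[\nabla_\pw v_\Mcal]_S\cdot\nu_S\|_S^2$ explicitly on the right-hand side. The plan is therefore not to estimate this term, but simply to retain it. This yields (ii) directly.

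Boundedness of $\J_h$ then requires bounding this term by $\|v_h\|_h$. By the discrete trace and inverse estimates, $h_S^{-1}\|[\nabla_\pw v_\Mcal]_S\cdot\nu_S\|_S^2 \lesssim h_S^{-3}\|[v_\Mcal]_S\|_S^2 + \|\D^2_\pw v_\Mcal\|^2_{K_+\cup K_-}$. Combined with \Cref{lem:equiv-norms-HHO}, this gives $\lesssim \|v_h\|_h^2$.

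Bounding the retained term quasi-optimally, rather than merely showing boundedness, is left to the later analysis. That analysis presumably uses the Galerkin-projection interpolation, where the jump of $\nabla\G_h v$ equals the jump of $\nabla(\G_h v - v)$.
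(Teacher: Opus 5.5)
Your treatment of (i) and (ii) is the same as the paper's. You apply \Cref{thm:smoothing} with $\ell=m=k$ and $n=k-2$, absorb the cell, face and value-jump terms into $|v_h|_\s$, and keep the normal-derivative jumps on the right-hand side. The gap is in the boundedness of $\J_h$. Boundedness is part of the statement, and the later analysis uses the continuity of $\J_h$. The local inequality you invoke,
\begin{align*}
h_S^{-1}\|[\nabla_\pw v_\Mcal]_S\cdot\nu_S\|_S^2 \lesssim h_S^{-3}\|[v_\Mcal]_S\|_S^2 + \|\D^2_\pw v_\Mcal\|^2_{K_+\cup K_-},
\end{align*}
is false. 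Take $v_{K_+}(x) = (x - x_S)\cdot\nu_S$ with $x_S \in S$, and $v_{K_-} = 0$. Then $[v_\Mcal]_S = 0$ and $\D^2_\pw v_\Mcal = 0$ on $K_+\cup K_-$, but $[\nabla_\pw v_\Mcal]_S\cdot\nu_S = 1$. Value jumps, via inverse estimates, only control tangential derivatives of the jump. No combination of discrete trace and inverse estimates yields control of the normal-derivative jump from them.

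The missing idea is that the stabilization does control the low-order part of the normal-derivative jump, because $P_0(S)\subseteq P_{k-2}(S)$ for $k\geq 2$. You had already written the right split but then discarded the first piece. The top-order remainder satisfies
\begin{align*}
\|(1-\Pi_S^{k-2})[\nabla_\pw v_\Mcal]_S\cdot\nu_S\|_S \leq \|(1-\Pi_S^{0})[\nabla_\pw v_\Mcal]_S\cdot\nu_S\|_S \lesssim h_S\|[\partial_{\nu t} v_\Mcal]_S\|_S
\end{align*}
by a Poincar\'e inequality on $S$. The discrete trace inequality then bounds $h_S\|[\partial_{\nu t} v_\Mcal]_S\|_S^2$ by $\|\D^2_\pw v_\Mcal\|^2_{K_+\cup K_-}$. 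For the low-order part, $\Pi_S^{k-2}[\nabla_\pw v_\Mcal]_S\cdot\nu_S = \Pi_S^{k-2}[\nabla_\pw v_\Mcal\cdot\nu_S - \gamma_\Sigma]_S$ is bounded by $|v_h|_\s$, using your $\pm\gamma_\Sigma$ argument (on boundary faces $\gamma_\Sigma = 0$). Together these give
\begin{align*}
\sum_{S\in\Sigma} h_S^{-1}\|[\nabla_\pw v_\Mcal]_S\cdot\nu_S\|_S^2 \lesssim \|\D^2_\pw v_\Mcal\|^2 + |v_h|_\s^2 .
\end{align*}
Combined with (ii) and \Cref{lem:equiv-norms-HHO}, this yields the boundedness of $\J_h$. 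This is exactly the paper's argument: it splits off $\Pi_S^0$ and uses $\|\Pi_S^0 g\|_S\leq\|\Pi_S^{k-2} g\|_S$.
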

\begin{proof}
	The properties (i)--(ii) follow from \Cref{thm:smoothing} with $w = v_\Mcal$, $w_\Mcal = v_\Mcal$, $w_\Sigma = v_\Sigma$, $\delta_\Sigma = \gamma_\Sigma$, $\ell = m = k$, $n = k-2$, and $\|[v_\Mcal]_S\|_S = \|[v_\Mcal - v_\Sigma]_S\|_S$ for any $S \in \Sigma$. To prove the boundedness of $\J_h$, we employ the Poincar\'e inequality to infer, for any $S \in \Sigma$, that
	\begin{align*}
		\|[\nabla_\pw v_\Mcal]_S \cdot \nu_S\|_S^2 &= \|(1 - \Pi_S^0)[\nabla_\pw v_\Mcal]_S \cdot \nu_S\|_S^2 + \|\Pi_S^0 [\nabla_\pw v_\Mcal]_S \cdot \nu_S\|_S^2\\
		&\lesssim h_S^2\|[\partial_{\nu t} v_\Mcal]_S\|^2_S + \|\Pi_S^{k-2} [\nabla_\pw v_\Mcal]_S \cdot \nu_S\|^2_S.
	\end{align*}
	The second term on the right hand side is equal to $\|\Pi_S^{k-2} [\nabla_\pw v_\Mcal \cdot \nu_S - \gamma_\Sigma]_S\|_S^2$ and is controlled by the stabilization via a triangle inequality. Therefore, we deduce from (ii) and the discrete trace inequality that
	\begin{align*}
		\|\D_\pw^2(v_\Mcal - \J_h v_h)\| \lesssim \|\D_\pw^2 v_\Mcal\| + |v_h|_\s.
	\end{align*}
	This and \Cref{lem:equiv-norms-HHO} conclude the assertion.
\end{proof}
In contrast to the numerical methods analyzed so far, the weights in \Cref{lem:smoothing-operator-HHO}(i) do not allow for full discrete consistency, but rather a weaker version of it.
For lowest-order methods, this is also observed in \cite{GallistlTran26} to derive a~priori error estimates for Discrete Kirchhoff elements.

\begin{lemma}[discrete consistency]\label{lem:d-consistency-HHO}
	Any $v_h = (v_\Mcal, v_\Sigma, \gamma_\Sigma) \in V_h$, $w_h \in V_h$, and $\phi \in P_k(\Mcal)$ satisfy
	\begin{align*}
		|(\D^2_\pw (\mathcal{R}_h v_h - \J_h v_h), \D^2_\pw \phi)_{L^2(\Omega)}| \lesssim \|\D^2_\pw(\phi - \J_h w_h)\|\big(|v_h|_\s + \|\D^2_\pw(v_\Mcal - \J_h v_h)\|\big).
	\end{align*}
\end{lemma}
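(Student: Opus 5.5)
The plan is to compare the defining identity \eqref{def:rec-HHO} of $\mathcal{R}_h v_h$ with an exact piecewise integration by parts of $(\D^2 \J_h v_h, \D^2_\pw \phi)_{L^2(\Omega)}$, and to show that everything cancels except a tangential edge remainder.

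\textbf{Step 1 (integration by parts).} Since $\J_h v_h \in V = H^2_0(\Omega)$, a piecewise integration by parts (as in \cite[Eq.~(19)]{DongErn2022}) with the traces of $\J_h v_h$ and $\partial_\nu \J_h v_h$ gives
\begin{align*}
(\D^2 \J_h v_h, \D^2_\pw\phi)_{L^2(\Omega)} &= (\J_h v_h, \Delta^2_\pw \phi)_{L^2(\Omega)}\\
&\quad - \sum_{S\in\Sigma(\Omega)}\int_S\big(\J_h v_h[\partial_\nu\Delta_\pw\phi]_S - \partial_\nu \J_h v_h[\partial_{\nu\nu}\phi]_S - \partial_t \J_h v_h [\partial_{\nu t}\phi]_S\big)\d{s}.
\end{align*}

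\textbf{Step 2 (cancellation by the weights).} Subtracting this from \eqref{def:rec-HHO}, the difference $(\D^2_\pw(\mathcal{R}_h v_h - \J_h v_h), \D^2_\pw\phi)_{L^2(\Omega)}$ is a sum of four terms, and all but one vanish by \Cref{lem:smoothing-operator-HHO}(i).
\begin{itemize}
\item The volume term $(v_\Mcal - \J_h v_h, \Delta^2_\pw\phi)_{L^2(\Omega)}$ vanishes because $\Delta^2_\pw \phi \in P_{k-4}(\Mcal)$ and $\Pi_\Mcal^k \J_h v_h = v_\Mcal$.
\item The face term with $[\partial_\nu\Delta_\pw\phi]_S \in P_{k-3}(S)$ vanishes because $\Pi_\Sigma^k \J_h v_h = v_\Sigma$.
\item The face term with $[\partial_{\nu\nu}\phi]_S\in P_{k-2}(S)$ vanishes because $\Pi_\Sigma^{k-2}\nabla \J_h v_h\cdot\nu_\Sigma = \gamma_\Sigma$.
\item The remaining term is $\sum_S\int_S \partial_t(v_\Sigma - \J_h v_h)[\partial_{\nu t}\phi]_S\d{s}$.
\end{itemize}
For this last term, integrate by parts tangentially on each face. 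The interior part $\int_S (v_\Sigma - \J_h v_h)\,\partial_t[\partial_{\nu t}\phi]_S\d{s}$ vanishes again, since the tangential derivative lies in $P_{k-3}(S)$. What remains is the edge remainder
\[
\mathcal{E}(\phi)\coloneqq\sum_{S\in\Sigma(\Omega)}\int_{\partial S}(v_\Sigma - \J_h v_h)\,[\partial_{\nu t}\phi]_S\, n_{\partial S}\d{\sigma}.
\]
Thus $(\D^2_\pw(\mathcal{R}_h v_h - \J_h v_h), \D^2_\pw\phi)_{L^2(\Omega)} = \mathcal{E}(\phi)$ for every $\phi\in P_k(\Mcal)$.

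\textbf{Step 3 (inserting $\J_h w_h$).} By the remark on linearity, $\psi \coloneqq \J_h w_h$ is a $C^1$ piecewise polynomial on the Worsey--Farin splits. Its gradient is therefore continuous, so its tangential derivatives $\partial_{\nu t}\psi$ do not jump across any face $S$, and their edge traces exist cellwise. Hence $[\partial_{\nu t}\phi]_S = [\partial_{\nu t}(\phi - \psi)]_S$ in $\mathcal{E}$.

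\textbf{Step 4 (estimating the edge terms).} Apply the Cauchy inequality on each edge with weights $h_S^{-1}$ and $h_S$.
\begin{itemize}
\item For the $\phi-\psi$ factor, $\phi-\psi$ is a piecewise polynomial on a shape-regular submesh. Discrete trace inequalities (cell $\to$ face $\to$ edge) and inverse estimates give $h_S\|\D^2_\pw(\phi-\psi)\|^2_{\partial S}\lesssim\|\D^2_\pw(\phi-\psi)\|^2$ over the neighbouring cells.
\item For the other factor, split $v_\Sigma - \J_h v_h = (v_\Sigma - v_K) + (v_K - \J_h v_h)$. The first part is a face polynomial, so an inverse trace estimate gives $h_S^{-1}\|v_\Sigma - v_K\|^2_{\partial S}\lesssim h_S^{-3}\|v_\Sigma - v_K\|^2_S\le |v_h|^2_\s$. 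The second part is again piecewise polynomial on the split, so traces and inverse estimates give $h_S^{-1}\|v_K - \J_h v_h\|^2_{\partial S}\lesssim h_K^{-4}\|v_K - \J_h v_h\|_K^2 + h_K^{-2}\|\nabla(v_K - \J_h v_h)\|_K^2 + \|\D^2(v_K - \J_h v_h)\|_K^2$.
\end{itemize}
Since $\Pi_K^k(v_K - \J_h v_h)=0$, in particular $\Pi_K^1(v_K - \J_h v_h)=0$, two Poincar\'e inequalities reduce the lower-order terms to $\|\D^2(v_K - \J_h v_h)\|_K$. Summing over faces with finite overlap yields the assertion.

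\textbf{Main obstacle.} I expect the delicate point to be Step 3. One must justify that the edge traces of $\partial_{\nu t}\J_h w_h$ are meaningful and single-valued. This relies on the $C^1$ piecewise-polynomial structure of $\J_h$ on the Worsey--Farin submesh, not on mere $H^2$ regularity. One must also track the orientation and sign conventions of the tangential integration by parts on polygonal faces, including boundary faces, where $v_\Sigma = 0 = \J_h v_h$, so that the edge contributions indeed assemble into $\mathcal{E}$.
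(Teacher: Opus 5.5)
Your proof is correct modulo a scaling slip in Step 4, but after Step 2 it takes a detour that the paper avoids.

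\textbf{Comparison of routes.} Steps 1--2, up to the identity with the single remaining term $\sum_S\int_S \partial_t(v_\Sigma - \J_h v_h)[\partial_{\nu t}\phi]_S\,\mathrm{d}s$, coincide with the paper's proof. The paper never integrates by parts tangentially. It inserts $\J_h w_h$ directly at face level: $\partial_\nu \J_h w_h$ is continuous across $S$, so $[\partial_{\nu t}\J_h w_h]_S = 0$ a.e.\ on $S$. It then applies the Cauchy inequality on $S$, the discrete trace inequality $h_S\|[\partial_{\nu t}(\phi - \J_h w_h)]_S\|_S^2 \lesssim \|\D^2_\pw(\phi - \J_h w_h)\|^2$, and the inverse estimate $h_S^{-1}\|\partial_t(v_\Sigma - \J_h v_h)\|_S^2 \lesssim h_S^{-3}\|v_\Sigma - \J_h v_h\|_S^2$. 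The last estimate holds because $\J_h v_h|_S$ is a continuous piecewise polynomial on the refined face. The paper finishes with a split through $\{v_\Mcal\}_S$ and the trace and Poincar\'e inequalities.

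Your tangential integration by parts reuses the orthogonality $\Pi_S^k(v_\Sigma - \J_h v_h) = 0$ and trades that face inverse estimate for cell-to-edge trace inequalities. This is valid. However, it relies on exactly the same piecewise-polynomial structure of $\J_h$ on the Worsey--Farin split, so it buys no generality. It also creates the edge-trace and polygonal-face bookkeeping that you flag as the main obstacle. In the face-level argument that obstacle does not arise, since only $[\partial_{\nu t}\J_h w_h]_S = 0$ on $S$ is needed.

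\textbf{Correction to Step 4.} The weights in your edge Cauchy inequality are off by one power of $h_S$. By scaling, in both $d=2$ and $d=3$, the correct discrete trace bounds are:
\begin{itemize}
\item $h_S^{2}\|\D^2_\pw(\phi-\psi)\|^2_{\partial S}\lesssim\|\D^2_\pw(\phi-\psi)\|^2_{K_+\cup K_-}$;
\item $h_S^{-2}\|v_\Sigma - v_K\|^2_{\partial S}\lesssim h_S^{-3}\|v_\Sigma - v_K\|^2_S$;
\item $h_S^{-2}\|v_K - \J_h v_h\|^2_{\partial S}\lesssim h_K^{-4}\|v_K - \J_h v_h\|^2_K$.
\end{itemize}
Your stated bound $h_S\|\D^2_\pw(\phi-\psi)\|^2_{\partial S}\lesssim\|\D^2_\pw(\phi-\psi)\|^2$ is false as $h_S\to 0$: the right-hand side lacks a factor $h_S^{-1}$. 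Your bounds for the other factor are true but wasteful by a factor $h_S$.

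The two errors compensate, so the product bound $\|a\|_{\partial S}\|g\|_{\partial S}\lesssim h_S^{-3/2}\|a\|_S\|g\|_{K_+\cup K_-}$ still holds. With the weights $h_S^{-2}$ and $h_S^{2}$ your argument closes as intended.
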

\begin{proof}
	From an integration by parts and the definition of $\mathcal{R}_h$ in \eqref{def:rec-HHO}, we deduce that $(\D^2_\pw (\mathcal{R}_h v_h - \J_h v_h), \D^2_\pw \phi)_{L^2(\Omega)}$ is equal to
	\begin{align*}
		(v_\Mcal - \J_h v_h, \Delta^2_\pw &\phi)_{L^2(\Omega)}
		- \sum_{S \in \Sigma(\Omega)} \int_S \big((v_\Sigma - \J_h v_h)  [\partial_\nu \Delta_\pw \phi]_S\\
		& - (\gamma_\Sigma - \partial_\nu \J_h v_h) [\partial_{\nu\nu} \phi]_S - \partial_t (v_\Sigma - \J_h v_h)  [\partial_{\nu t} \phi]_S\big) \d{s}.
	\end{align*}
	The weights in \Cref{lem:smoothing-operator-HHO}(i) show that the first three terms vanish. Hence,
	\begin{align*}
		(\D^2_\pw (\mathcal{R}_h v_h - \J_h v_h), \D^2_\pw \phi)_{L^2(\Omega)} = -\sum_{S \in \Sigma(\Omega)} \int_S \partial_t (v_\Sigma - \J_h v_h)  [\partial_{\nu t} \phi]_S \d{s}.
	\end{align*}
	Since $[\partial_{\nu t} \J_h w_h]_S$ vanishes along any face $S \in \Sigma(\Omega)$,
	the discrete trace inequality shows
	\begin{align*}
		|(\D^2_\pw (\mathcal{R}_h v_h - \J_h v_h), \D^2_\pw \phi)_{L^2(\Omega)}| \leq \sum_{S \in \Sigma} \|\partial_t(v_\Sigma - \J_h v_h)\|_S \|[\partial_{\nu t}(\phi - \J_h w_h)]_S\|_S&\\
		\lesssim \|\D^2_\pw(\phi - \J_h w_h)\| \Big(\sum_{S \in \Sigma} h_S^{-1}\|\partial_t(v_\Sigma - \J_h v_h)\|_S^2\Big)^{1/2}&.
	\end{align*}
	Inverse estimates, the triangle inequality, and \Cref{lem:smoothing-operator-HHO} provide
	\begin{align*}
		h_S^{-1}\|\partial_t(v_\Sigma - \J_h v_h)\|_S^2 &\lesssim h_S^{-3}\|v_\Sigma - \J_h v_h\|_S^2\\
		&\lesssim h_S^{-3}\big(\|v_\Sigma - \{v_\Mcal\}_S\|_S^2 + \|\{v_\Mcal\}_S - \J_h v_h\|_S^2\big).
	\end{align*}
	The combination of the two previously displayed formula with the trace and Poincar\'e inequality concludes the proof.
\end{proof}

\subsection{Interpolation}
We consider the interpolation operator
$\I_h : V \to V_h$,
\begin{align*}
	v \mapsto (\G_h v, \Pi_\Sigma^k v, \Pi_\Sigma^{k-2} \nabla v \cdot \nu_\Sigma) \in V_h
\end{align*}
with the Galerkin projection $\G_h$ from \eqref{def:Galerkin}. The following preliminary results correspond to \Cref{lem:quasi-opt-stab-WG}--\ref{lem:quasi-int-WG}.
\begin{lemma}[quasi-optimality of stabilization]\label{lem:quasi-opt-stab-HHO}
	Any $v \in V$ satisfies
	\begin{align*}
		|\I_h v|_\s \lesssim \|\D^2_\pw(v - \G_h v)\|.
	\end{align*}
\end{lemma}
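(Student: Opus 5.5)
The plan is to mirror the proof of \Cref{lem:quasi-opt-stab-WG}, with the only change being the HHO stabilization, where the normal-derivative term carries an extra projection $\Pi_S^{k-2}$.

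First, insert $\I_h v = (\G_h v, \Pi_\Sigma^k v, \Pi_\Sigma^{k-2}\nabla v\cdot\nu_\Sigma)$ into the definition of $\s_h$. For the face-value term, $\G_h v|_K|_S \in P_k(S)$, so $\G_h v|_K - \Pi_S^k v = \Pi_S^k(\G_h v|_K - v)$. For the normal-derivative term, $\Pi_S^{k-2}(\nabla \G_h v|_K\cdot\nu_S - \Pi_S^{k-2}\nabla v\cdot\nu_S) = \Pi_S^{k-2}\nabla(\G_h v|_K - v)\cdot\nu_S$, since $\Pi_S^{k-2}$ is idempotent. Because $L^2$ projections are non-expansive, this gives
\begin{align*}
	|\I_h v|_\s^2 \leq \sum_{K \in \Mcal}\sum_{S \in \Sigma(K)} \big(h_S^{-3}\|v - \G_h v|_K\|_S^2 + h_S^{-1}\|\nabla(v - \G_h v|_K)\cdot\nu_S\|_S^2\big).
\end{align*}
Here $v$ is in $H^2$ globally, so its traces on $S$ from either side coincide. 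Boundary faces are harmless, since $v_\Sigma$ vanishes there and $v \in H^2_0(\Omega)$ has zero trace and normal derivative.

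Second, bound each face term by a cell term using the continuous trace inequality $\|g\|_S^2 \lesssim h_K^{-1}\|g\|_K^2 + h_K\|\nabla g\|_K^2$ on $K$, applied to $g = v - \G_h v|_K$ and to $g = \nabla(v - \G_h v|_K)$. This yields the local bound
\begin{align*}
	h_K^{-4}\|v - \G_h v\|_K^2 + h_K^{-2}\|\nabla(v - \G_h v)\|_K^2 + \|\D^2(v - \G_h v)\|_K^2,
\end{align*}
with $h_S \approx h_K$ by mesh regularity. This step requires a trace inequality on polytopal cells; it holds with constants depending on $\vartheta$, via the simplicial submesh, as in \cite{DiPietroErn2012}.

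Third, remove the lower-order terms using the second condition in \eqref{def:Galerkin}. Testing with $p$ supported on a single cell $K$ gives $\Pi_K^1(v - \G_h v) = 0$ on every $K$. Hence $v - \G_h v$ has vanishing integral mean on $K$, and so does each partial derivative of it, because $\int_K \partial_j(v - \G_h v)\,\mathrm{d}x$ equals $\int_K (v - \G_h v)\,\partial_j x_j/\ldots$ — more cleanly: for $g \coloneqq v - \G_h v$ with $\Pi_K^1 g = 0$, the function $g - \Pi_K^0 g - (x - x_K)\cdot\Pi_K^0\nabla g$ is handled by two nested Poincar\'e inequalities. Concretely, I would argue $\|g\|_K \lesssim h_K\|\nabla g\|_K$ from $\Pi_K^0 g = 0$. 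Then, using that $\Pi_K^1 g = 0$ implies orthogonality to the affine function $x - x_K$, I would show $\|\Pi_K^0\nabla g\|_K \lesssim h_K^{-1}\|g\|_K$; substituting the first inequality into this yields $\|\nabla g\|_K \lesssim h_K\|\D^2 g\|_K$.

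The main obstacle I expect is this second-order Poincar\'e step on polytopes. If the route above becomes tangled, I would instead invoke the standard Bramble–Hilbert/Poincar\'e estimate $h_K^{-2}\|g\|_K + h_K^{-1}\|\nabla g\|_K \lesssim \|\D^2 g\|_K$ for $g \perp P_1(K)$, which holds on star-shaped unions of shape-regular simplices with constant depending on $\vartheta$. This is the same Poincar\'e inequality the paper already uses in \Cref{lem:quasi-opt-stab-WG}, so it may also simply be cited. Summing over $K$ then gives $|\I_h v|_\s \lesssim \|\D^2_\pw(v - \G_h v)\|$.
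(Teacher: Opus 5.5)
Your overall route is the paper's: it proves this lemma by repeating the WG argument, namely non-expansiveness of $\Pi_S^k$ and $\Pi_S^{k-2}$, followed by the trace and Poincar\'e inequalities. Your first two steps are correct, including the idempotence argument for the extra projection $\Pi_S^{k-2}$ and the treatment of boundary faces.

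The problem is your hand-made derivation of the second-order Poincar\'e inequality in the third step, which does not work. For $g \coloneqq v - \G_h v$, the condition $\Pi_K^1 g = 0$ does \emph{not} imply that $\partial_j g$ has zero mean, since $\int_K \nabla g \d{x} = \int_{\partial K} g\,\nu_K \d{s}$ is in general nonzero. The bound $\|\Pi_K^0 \nabla g\|_K \lesssim h_K^{-1}\|g\|_K$ is also false. On the unit cube, let $g$ be a smoothed ramp rising from $0$ to $1$ in the $x_1$-direction across a boundary layer of width $\epsilon$, minus its $L^2$ projection onto $P_1(K)$. Then $\Pi_K^1 g = 0$ and $\|g\|_K = O(\epsilon^{1/2})$, whereas $\Pi_K^0 \partial_1 g = 1 - O(\epsilon^{1/2})$. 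Even if that bound held, inserting $\|g\|_K \lesssim h_K \|\nabla g\|_K$ would only give the trivial $\|\Pi_K^0 \nabla g\|_K \lesssim \|\nabla g\|_K$. So the route is circular and never yields $\|\nabla g\|_K \lesssim h_K \|\D^2 g\|_K$.

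The fix is to split off the affine part before using orthogonality. Let $x_K$ be the centroid of $K$, $c \coloneqq \Pi_K^0 \nabla g$, and $r \coloneqq g - c\cdot(x - x_K)$. Then both $r$ and $\nabla r$ have zero mean on $K$, so two first-order Poincar\'e inequalities give $\|r\|_K \lesssim h_K \|\nabla r\|_K \lesssim h_K^2 \|\D^2 g\|_K$. The orthogonality of $g$ to $x - x_K$ reads $M_K c = -\int_K r\,(x - x_K) \d{x}$, where $M_K \coloneqq \int_K (x - x_K)\otimes(x - x_K) \d{x}$ has smallest eigenvalue $\gtrsim h_K^2 |K|$ by mesh regularity. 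Hence $\|c\|_K \lesssim h_K^{-1}\|r\|_K \lesssim h_K \|\D^2 g\|_K$, and therefore $h_K^{-2}\|g\|_K + h_K^{-1}\|\nabla g\|_K \lesssim \|\D^2 g\|_K$. Your fallback of citing this estimate for $g \perp P_1(K)$ directly is exactly what the paper does, so with that fallback (or the splitting above) your proof is complete. The paper does not assume star-shaped cells, but the splitting above only needs the first-order Poincar\'e inequality on $K$.
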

\begin{proof}
	The assertion follows along the lines of the proof of \Cref{lem:quasi-opt-stab-WG}.
\end{proof}

\begin{lemma}[quasi-optimality of reconstruction operator]\label{lem:quasi-opt-rec-HHO}
	Any $v \in V$ satisfies
	\begin{align*}
		\|\D^2_\pw(v - \mathcal{R}_h \I_h v)\| \lesssim \|\D^2_\pw(v - \G_h v)\|.
	\end{align*}
\end{lemma}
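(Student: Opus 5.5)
We split the error with the triangle inequality,
\begin{align*}
	\|\D^2_\pw(v - \mathcal{R}_h \I_h v)\| \leq \|\D^2_\pw(v - \G_h v)\| + \|\D^2_\pw(\G_h v - \mathcal{R}_h \I_h v)\|,
\end{align*}
so it remains to bound the second term. Since $\I_h v = (\G_h v, \Pi_\Sigma^k v, \Pi_\Sigma^{k-2}\nabla v\cdot\nu_\Sigma)$ has cell component $v_\Mcal = \G_h v$, \Cref{lem:equiv-norms-HHO} applied to $v_h = \I_h v$ gives
\begin{align*}
	\|\D^2_\pw(\mathcal{R}_h \I_h v - \G_h v)\| \lesssim |\I_h v|_\s.
\end{align*}
Then \Cref{lem:quasi-opt-stab-HHO} bounds $|\I_h v|_\s \lesssim \|\D^2_\pw(v - \G_h v)\|$, and the two displays conclude the proof.

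The only step needing care is the applicability of \Cref{lem:equiv-norms-HHO}. We rely on it as stated: the piecewise integration by parts in \eqref{def:rec-HHO} expresses $(\D^2_\pw(\mathcal{R}_h v_h - v_\Mcal), \D^2_\pw\phi)_{L^2(\Omega)}$ through face integrals of $v_\Sigma - v_K$, $\Pi_S^{k-2}(\gamma_\Sigma - \nabla v_K\cdot\nu_S)$ (the projection appears because $[\partial_{\nu\nu}\phi]_S \in P_{k-2}(S)$), and $\partial_t(v_\Sigma - v_K)$, the last being controlled by an inverse estimate on faces. The discrete trace and inverse estimates then give the bound by $|v_h|_\s$, tested with $\phi = \mathcal{R}_h v_h - v_\Mcal$. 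This works because $\Pi_\Mcal^1\mathcal{R}_h v_h = \Pi_\Mcal^1 v_\Mcal$ fixes the kernel of $\D^2_\pw$.

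The main obstacle is therefore not in this lemma itself but in the earlier ingredients. In particular, \Cref{lem:quasi-opt-stab-HHO} needs the trace and Poincaré inequalities for $v - \G_h v$ on each cell, for the value and normal-derivative face terms. These are available because $\G_h$ matches $v$ in the $P_1(\Mcal)$ moments by \eqref{def:Galerkin}, so $v - \G_h v$ and $\nabla_\pw(v - \G_h v)$ have vanishing cell means.
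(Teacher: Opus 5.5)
Your proof is correct and is exactly the paper's argument: a triangle inequality through $\G_h v$, then \Cref{lem:equiv-norms-HHO} applied to $\I_h v$ (whose cell component is $\G_h v$), then \Cref{lem:quasi-opt-stab-HHO}. One inaccuracy in your closing aside: orthogonality of $v - \G_h v$ to $P_1(\Mcal)$ makes the cell means of $v - \G_h v$ vanish, but not those of $\nabla_\pw(v - \G_h v)$ (a shifted cubic Legendre polynomial on an interval is orthogonal to $P_1$ yet has nonzero mean derivative); the needed bound $\|\nabla(v - \G_h v)\|_K \lesssim h_K\|\D^2(v - \G_h v)\|_K$ still holds by comparing with a suitable $q \in P_1(K)$ and using an inverse estimate.
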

\begin{proof}
	A triangle inequality, \Cref{lem:equiv-norms-HHO}, and \Cref{lem:quasi-opt-stab-HHO} imply the assertion.
\end{proof}

\begin{lemma}[conforming quasi-interpolation]\label{lem:quasi-int-HHO}
	Any $v \in V$ satisfies
	\begin{align*}
		\|\D^2 (v - \J_h \I_h v)\| \lesssim \|\D^2_\pw(v - \G_h v)\|.
	\end{align*}
\end{lemma}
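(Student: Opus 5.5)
We argue as for \Cref{lem:quasi-int-WG}: split with the triangle inequality,
\begin{align*}
	\|\D^2(v - \J_h \I_h v)\| \leq \|\D^2_\pw(v - \G_h v)\| + \|\D^2_\pw(\G_h v - \J_h \I_h v)\|.
\end{align*}
The first term is already the right-hand side. The second term is the approximation error of the smoother applied to $v_h \coloneqq \I_h v = (\G_h v, \Pi_\Sigma^k v, \Pi_\Sigma^{k-2}\nabla v \cdot \nu_\Sigma)$, whose cell component is $v_\Mcal = \G_h v$.

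For the second term we use the cellwise bound of \Cref{lem:smoothing-operator-HHO}(ii) rather than the global boundedness of $\J_h$. This gives
\begin{align*}
	\|\D^2_\pw(\G_h v - \J_h \I_h v)\|^2 \lesssim \sum_{S \in \Sigma} h_S^{-1}\|[\nabla_\pw \G_h v]_S \cdot \nu_S\|_S^2 + |\I_h v|_\s^2 .
\end{align*}
\Cref{lem:quasi-opt-stab-HHO} bounds $|\I_h v|_\s$ by $\|\D^2_\pw(v - \G_h v)\|$.

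The jump term is the one place where the HHO case differs from WG. There the normal-derivative stabilization controls the full jump, whereas here it only controls the $\Pi_S^{k-2}$ part. We do not need the stabilization for this term, because $v \in H^2_0(\Omega)$ gives $[\nabla v]_S = 0$ on every $S \in \Sigma$. Hence $[\nabla_\pw \G_h v]_S = [\nabla_\pw(\G_h v - v)]_S$, exactly as in the proof of \Cref{lem:quasi-opt-stab-DG}. On each cell $K$ adjacent to $S$, the continuous trace inequality yields
\begin{align*}
	h_S^{-1}\|\nabla(\G_h v - v)|_K\|_S^2 \lesssim h_K^{-2}\|\nabla(\G_h v - v)\|_K^2 + \|\D^2(\G_h v - v)\|_K^2 .
\end{align*}

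The main obstacle is bounding $h_K^{-1}\|\nabla(v - \G_h v)\|_K$ by $\|\D^2(v - \G_h v)\|_K$ via a Poincar\'e inequality. That step needs $\Pi_K^0 \nabla(v - \G_h v) = 0$ on each $K$. We get this from the second condition in \eqref{def:Galerkin}, which fixes the $P_1$ moments, together with the first condition. Concretely, we take $\phi \in P_k(\Mcal)$ to be quadratic on $K$ and zero elsewhere, which shows that $\Pi_K^0 \D^2(v - \G_h v) = 0$. The constraint $\Pi_K^1(v - \G_h v) = 0$ then implies that $v - \G_h v$ has vanishing mean gradient on $K$: the difference between $v - \G_h v$ and its affine part has zero mean Hessian, and the Poincar\'e chain from the zero $P_1$ moments controls $h_K^{-2}\|v - \G_h v\|_K + h_K^{-1}\|\nabla(v - \G_h v)\|_K$ by $\|\D^2(v - \G_h v)\|_K$. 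This is the same Poincar\'e argument the paper already uses in \Cref{lem:quasi-opt-stab-WG}, so we take it as available.

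Summing over all faces and cells and inserting into the triangle inequality concludes the proof.
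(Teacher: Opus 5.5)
Your proof matches the paper's: the triangle inequality, the bound of \Cref{lem:smoothing-operator-HHO}(ii) applied to $\I_h v$, the identity $[\nabla_\pw \G_h v]_S = [\nabla_\pw(\G_h v - v)]_S$ from $v \in H^2_0(\Omega)$ followed by trace and Poincar\'e inequalities, and \Cref{lem:quasi-opt-stab-HHO} for $|\I_h v|_\s$. The overall argument is correct, but your justification of the Poincar\'e step contains a false claim.

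You assert that $\Pi_K^1(v-\G_h v)=0$ together with $\Pi_K^0\D^2(v-\G_h v)=0$ forces $\Pi_K^0\nabla(v-\G_h v)=0$. This fails. Take $k=2$, a cell $K=[0,1]^d$ away from $\partial\Omega$, and $v\in V$ with $v|_K=g(x_1)$, where $g(t)=10t^3-15t^2+6t-1/2$. Then $\int_0^1 g=\int_0^1 t\,g=\int_0^1 g''=0$. The definition \eqref{def:Galerkin} is cellwise, so these moments give $\G_h v|_K=0$. Yet $\int_K \partial_1 v \d{x} = g(1)-g(0)=1$.

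The property is also not needed. Let $e\in H^2(K)$ with $\Pi_K^1 e=0$, and choose the affine $p$ with $\Pi_K^0 p=\Pi_K^0 e$ and $\nabla p=\Pi_K^0\nabla e$. Two Poincar\'e inequalities for mean-free functions give $h_K^{-2}\|e-p\|_K+h_K^{-1}\|\nabla(e-p)\|_K\lesssim\|\D^2 e\|_K$. Moreover $\|p\|_K=\|\Pi_K^1(p-e)\|_K\le\|e-p\|_K$, and the inverse estimate $\|\nabla p\|_K\lesssim h_K^{-1}\|p\|_K$ transfers the bound to $e$. Hence $h_K^{-2}\|e\|_K+h_K^{-1}\|\nabla e\|_K\lesssim\|\D^2 e\|_K$ follows from the $P_1$-moment condition in \eqref{def:Galerkin} alone; the Hessian orthogonality plays no role. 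This is exactly the inequality behind the paper's one-line appeal to the trace and Poincar\'e inequalities.

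So you should drop the mean-gradient claim and cite this estimate directly.
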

\begin{proof}
	The sum of \Cref{lem:smoothing-operator-HHO}(i) over all cells $K \in \Mcal$ leads to
	\begin{align*}
		\|\D^2_\pw(\G_h v - \J_h \I_h v)\|^2 \lesssim \sum_{S \in \Sigma} h_S^{-1} \|[\nabla_\pw \G_h v]_S \cdot \nu_S\|_S^2 + |\I_h v|_\s^2.
	\end{align*}
	Since $[\nabla v]_S \cdot \nu_S$ vanishes along all faces $S \in \Sigma$, the triangle, trace, and Poincar\'e inequality provide $\sum_{S \in \Sigma} h_S^{-1} \|[\nabla_\pw \G_h v]_S \cdot \nu_S\|_S^2 \lesssim \|\D^2_\pw(v - \G_h v)\|^2$. This, the previously displayed formula, and \Cref{lem:quasi-opt-stab-HHO} conclude the proof.
\end{proof}

\subsection{Error estimates}
We state the corresponding version of \eqref{ineq:main_results}.
\begin{theorem}[a~priori]\label{thm:a-priori-HHO}
	It holds
	\begin{align*}
		\|\D^2_\pw(u - u_\Mcal)\| + \|\I_h u - u_h\|_h + |u_h|_\s \lesssim \|\D^2_\pw(u - \G_h u)\| + \osc(f,\Mcal).
	\end{align*}
\end{theorem}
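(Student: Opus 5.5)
The argument follows the proof of \Cref{thm:a-priori-WG}. The one new point is that \Cref{lem:d-consistency-HHO} gives only a weak discrete consistency, so the consistency error will contain a term in $\|e_h\|_h$ times a quasi-optimal quantity instead of vanishing. Throughout we abbreviate $e_h = (e_\Mcal, e_\Sigma, \alpha_\Sigma) \coloneqq \I_h u - u_h$.

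\emph{Data approximation.} Exactly as in \eqref{ineq:pr-a-priori-data-appr}, \Cref{lem:smoothing-operator-HHO}(i) gives $\Pi_\Mcal^k \J_h e_h = e_\Mcal$. With \Cref{lem:smoothing-operator-HHO}(ii), the Poincar\'e inequality and \Cref{lem:equiv-norms-HHO} this yields
\[
F(\J_h e_h) - F_h(e_h) \lesssim \osc(f,\Mcal)\|e_h\|_h .
\]

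\emph{Consistency error.} We split
\begin{align*}
a_h(\I_h u, e_h) - a(u, \J_h e_h) &= (\D^2_\pw(\mathcal{R}_h \I_h u - u), \D^2 \J_h e_h)_{L^2(\Omega)}\\
&\quad + (\D^2_\pw \mathcal{R}_h \I_h u, \D^2_\pw(\mathcal{R}_h e_h - \J_h e_h))_{L^2(\Omega)} + \s_h(\I_h u, e_h).
\end{align*}
The first term is bounded by \Cref{lem:quasi-opt-rec-HHO} together with the boundedness of $\J_h$ from \Cref{lem:smoothing-operator-HHO} and \Cref{lem:equiv-norms-HHO}. For the second term we apply \Cref{lem:d-consistency-HHO} with $v_h = e_h$, $\phi = \mathcal{R}_h \I_h u \in P_k(\Mcal)$ and $w_h = \I_h u$. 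This gives the bound
\[
\|\D^2_\pw(\mathcal{R}_h \I_h u - \J_h \I_h u)\|\big(|e_h|_\s + \|\D^2_\pw(e_\Mcal - \J_h e_h)\|\big).
\]
The first factor is at most $\|\D^2_\pw(\mathcal{R}_h \I_h u - u)\| + \|\D^2(u - \J_h \I_h u)\| \lesssim \|\D^2_\pw(u - \G_h u)\|$ by \Cref{lem:quasi-opt-rec-HHO} and \Cref{lem:quasi-int-HHO}. The second factor is $\lesssim \|e_h\|_h$ by \Cref{lem:smoothing-operator-HHO}(ii), the bound $\sum_S h_S^{-1}\|[\nabla_\pw e_\Mcal]_S\cdot\nu_S\|_S^2 \lesssim \|\D^2_\pw e_\Mcal\|^2 + |e_h|_\s^2$ established in that proof, and \Cref{lem:equiv-norms-HHO}. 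The stabilisation term is handled by the polarisation identity $\s_h(\I_h u, e_h) = (|\I_h u|_\s^2 + |e_h|_\s^2 - |u_h|_\s^2)/2$ and \Cref{lem:quasi-opt-stab-HHO}. Inserting everything into \eqref{ineq:err-split} gives
\[
\|e_h\|_h^2 \le C\big(\|\D^2_\pw(u-\G_h u)\| + \osc(f,\Mcal)\big)\|e_h\|_h + \tfrac12\big(|\I_h u|_\s^2 + |e_h|_\s^2 - |u_h|_\s^2\big).
\]
Since $|e_h|_\s \le \|e_h\|_h$, absorbing $|e_h|_\s^2/2$ into the left-hand side and applying Young's inequality yields
\[
\|e_h\|_h + |u_h|_\s \lesssim \|\D^2_\pw(u - \G_h u)\| + \osc(f,\Mcal).
\]

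\emph{Cell error.} Finally,
\begin{align*}
\|\D^2_\pw(u - u_\Mcal)\| &\le \|\D^2(u - \J_h\I_h u)\| + \|\D^2 \J_h e_h\| + \|\D^2_\pw(\J_h u_h - u_\Mcal)\|.
\end{align*}
The first two terms are controlled by \Cref{lem:quasi-int-HHO} and the boundedness of $\J_h$. The last term is where the HHO setting differs from the WG one: \Cref{lem:smoothing-operator-HHO}(ii) contains the extra jump term $\sum_S h_S^{-1}\|[\nabla_\pw u_\Mcal]_S\cdot\nu_S\|_S^2$, which is not directly a stabilisation. I expect this to be the main obstacle. I would handle it by the splitting from the proof of \Cref{lem:smoothing-operator-HHO}: the $\Pi_S^{k-2}$ part is bounded by $|u_h|_\s$. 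The tangential part $h_S^2\|[\partial_{\nu t} u_\Mcal]_S\|_S^2$ is bounded by inserting $\J_h\I_h u$, whose jumps vanish. The discrete trace inequality then bounds it by $\|\D^2_\pw(u_\Mcal - \J_h \I_h u)\|$, which in turn is bounded by $\|\D^2_\pw(u - u_\Mcal)\|$ plus quasi-optimal terms. This only yields a circular estimate with constant of order one, so it must be refined. The repair is to split instead $u_\Mcal = \G_h u - e_\Mcal$. The jump of $\nabla_\pw \G_h u$ is quasi-optimal, exactly as in the proof of \Cref{lem:quasi-int-HHO}. The jump of $\nabla_\pw e_\Mcal$ is $\lesssim \|e_h\|_h$, again by that proof and \Cref{lem:equiv-norms-HHO}. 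This removes the circularity and concludes the bound.
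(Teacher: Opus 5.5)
Your proposal is correct and matches the paper's proof: it uses the same splitting of $a_h(\I_h u, e_h) - a(u,\J_h e_h)$, the same application of \Cref{lem:d-consistency-HHO} with $v_h = e_h$, $\phi = \mathcal{R}_h \I_h u$, $w_h = \I_h u$, and the same polarization and absorption argument from \Cref{thm:a-priori-WG}. For the cell error, which the paper only defers to the WG proof, you rightly spot the extra jump term in \Cref{lem:smoothing-operator-HHO}(ii), and your splitting $u_\Mcal = \G_h u - e_\Mcal$ handles it correctly; however, $\|\D^2_\pw(u - u_\Mcal)\| \le \|\D^2_\pw(u - \G_h u)\| + \|\D^2_\pw e_\Mcal\|$ together with \Cref{lem:equiv-norms-HHO} gives the bound in one line, without going through $\J_h$.
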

\begin{proof}
	We abbreviate $e_h = (e_\Mcal, e_\Sigma, \alpha_\Sigma) \coloneqq \I_h u - u_h \in V_h$.
	From \Cref{lem:d-consistency-HHO} and the continuity of $\J_h$, we infer that
	\begin{align*}
		(\D^2_\pw \mathcal{R}_h \I_h u, \D^2_\pw (\mathcal{R}_h \I_h e_h - \J_h e_h))_{L^2(\Omega)} \lesssim \|\D^2_\pw(\mathcal{R}_h \I_h u - \J_h \I_h u)\|\|e_h\|_h.
	\end{align*}
	This, a triangle inequality, and \Cref{lem:quasi-opt-rec-HHO}--\Cref{lem:quasi-int-HHO} imply
	\begin{align*}
		(\D^2_\pw \mathcal{R}_h \I_h u, \D^2_\pw \mathcal{R}_h (\I_h e_h - \J_h e_h))_{L^2(\Omega)} \lesssim \|\D^2_\pw(u - \G_h u)\|\|e_h\|_h.
	\end{align*}
	Elementary algebra provides the identity
	\begin{align*}
		a_h(\I_h u, e_h) - a(u,\J_h e_h) = (\D^2_\pw \mathcal{R}_h \I_h u, \D^2_\pw \mathcal{R}_h (\I_h e_h - \J_h e_h))_{L^2(\Omega)}&\\
		+ (\D^2_\pw (\mathcal{R}_h \I_h u - u), \D^2 \J_h e_h)_{L^2(\Omega)} + \s_h(\I_h u, e_h)&.
	\end{align*}
	We combine the two previously displayed formula with \Cref{lem:quasi-opt-rec-HHO}, the continuity of $\J_h$, \eqref{ineq:err-split}, and \eqref{ineq:pr-a-priori-data-appr} to deduce
	\begin{align*}
		\|e_h\|^2_h &= a_h(e_h,e_h) = a_h(\I_h u, e_h) - a(u, \J_h e_h) + F(\J_h e_h) - F_h(e_h)\\
		&\leq C\big(\|\D^2_\pw(u - \G_h u)\| + \osc(f,\Mcal)\big)\|e_h\|_h + \s_h(\I_h u, e_h)
	\end{align*}
	for some generic positive constant $C$.
	The remaining proof can follow that of \Cref{thm:a-priori-WG} and is omitted for the sake of brevity.
\end{proof}

\begin{theorem}[$H^s$ error]\label{thm:Hs-HHO}
	Suppose that \eqref{ineq:elliptic-regularity} holds. Then
	\begin{align*}
		\|u - u_\Mcal\|_{H^s(\Mcal)} + \|u - \J_h u_h\|_{H^s(\Omega)} \lesssim h_\mathrm{max}^{\min\{\delta,k-1\}} \big(\|\D^2_\pw(u - \G_h u)\| + \osc(f,\Mcal)\big).
	\end{align*}
\end{theorem}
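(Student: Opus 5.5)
I will follow the duality argument of \Cref{thm:Hs}, adapted to the HHO reconstruction and to the weaker discrete consistency of \Cref{lem:d-consistency-HHO}. By Hahn--Banach, pick $G \in H^{-s}(\Omega)$ with $\|G\|_{H^{-s}(\Omega)} = 1$ and $\|u - \J_h u_h\|_{H^s(\Omega)} = G(u - \J_h u_h)$. Let $z \in V$ solve $\Delta^2 z = G$. Then \eqref{ineq:elliptic-regularity} and standard approximation give $\|\D^2_\pw(z - \G_h z)\| \lesssim h_{\max}^{\min\{\delta,k-1\}}$ via \eqref{eq:ba-Galerkin}. I would start from the split
\begin{align*}
G(u - \J_h u_h) = a(z - \J_h \I_h z, u - \J_h u_h) + a(\J_h \I_h z, u) - a(\J_h \I_h z, \J_h u_h).
\end{align*}
Since $a(\J_h \I_h z, u) = (f, \J_h \I_h z)_{L^2(\Omega)}$, the discrete problem tested with $\I_h z$ gives $(f, \G_h z)_{L^2(\Omega)} = (\D^2_\pw \mathcal{R}_h u_h, \D^2_\pw \mathcal{R}_h \I_h z)_{L^2(\Omega)} + \s_h(u_h, \I_h z)$. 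Hence
\begin{align*}
\|u - \J_h u_h\|_{H^s(\Omega)} &= a(z - \J_h \I_h z, u - \J_h u_h) + (f, \J_h \I_h z - \G_h z)_{L^2(\Omega)} + \s_h(u_h, \I_h z)\\
&\quad + (\D^2_\pw \mathcal{R}_h u_h, \D^2_\pw \mathcal{R}_h \I_h z)_{L^2(\Omega)} - (\D^2 \J_h u_h, \D^2 \J_h \I_h z)_{L^2(\Omega)}.
\end{align*}

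The first three terms are handled exactly as in \Cref{thm:Hs}. The first uses \Cref{lem:quasi-int-HHO} together with $\|\D^2(u - \J_h u_h)\| \lesssim \|\D^2_\pw(u - u_\Mcal)\| + \|\D^2_\pw(u_\Mcal - \J_h u_h)\|$. The second uses \Cref{lem:smoothing-operator-HHO}(i)--(ii), the Poincar\'e inequality, and \Cref{lem:quasi-opt-stab-HHO}, which gives the bound $\osc(f,\Mcal)\|\D^2_\pw(z-\G_h z)\|$. The third uses Cauchy--Schwarz and \Cref{lem:quasi-opt-stab-HHO}. For all three, one also needs that $\|\D^2_\pw(u_\Mcal - \J_h u_h)\|$ is quasi-optimal. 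This follows from \Cref{lem:smoothing-operator-HHO}(ii) combined with $|u_h|_\s$ and with $h_S^{-1}\|[\nabla_\pw u_\Mcal]_S\cdot\nu_S\|^2_S$. The latter is bounded as in the proof of \Cref{lem:quasi-int-HHO}: subtract $\G_h u$, use the jump-free $\nabla u$, and control $e_\Mcal$ by $\|e_h\|_h$ through the discrete trace inequality and \Cref{thm:a-priori-HHO}.

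The main obstacle is the last difference of products, where WG had exact discrete consistency. I would split it as
\begin{align*}
(\D^2_\pw(\mathcal{R}_h u_h - \J_h u_h), \D^2_\pw \mathcal{R}_h \I_h z)_{L^2(\Omega)} + (\D^2 \J_h u_h, \D^2_\pw(\mathcal{R}_h \I_h z - \J_h \I_h z))_{L^2(\Omega)}.
\end{align*}

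For the first piece, I apply \Cref{lem:d-consistency-HHO} with $v_h = u_h$, $\phi = \mathcal{R}_h \I_h z$ and $w_h = \I_h z$. Its bound is $\|\D^2_\pw(\mathcal{R}_h\I_h z - \J_h\I_h z)\|\,(|u_h|_\s + \|\D^2_\pw(u_\Mcal - \J_h u_h)\|)$. The first factor is $\lesssim \|\D^2_\pw(z - \G_h z)\|$ by \Cref{lem:quasi-opt-rec-HHO}, \Cref{lem:quasi-int-HHO}, and a triangle inequality. The second factor is quasi-optimal as shown above.

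For the second piece, I again use \Cref{lem:d-consistency-HHO}, now with $v_h = \I_h z$, $\phi = \mathcal{R}_h u_h$ and $w_h = u_h$. This replaces $\D^2_\pw \mathcal{R}_h \I_h z$ by $\D^2\J_h\I_h z$ up to $\|\D^2_\pw(\mathcal{R}_h u_h - \J_h u_h)\|(|\I_h z|_\s + \|\D^2_\pw(\G_h z - \J_h\I_h z)\|)$, which is a product of quasi-optimal factors. The remainder is $(\D^2_\pw(\J_h u_h - \mathcal{R}_h u_h), \D^2_\pw(\mathcal{R}_h\I_h z - \J_h \I_h z))_{L^2(\Omega)}$, which is bounded by Cauchy--Schwarz.

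Collecting the terms gives $\|u - \J_h u_h\|_{H^s(\Omega)} \lesssim \|\D^2_\pw(z - \G_h z)\|\,(\|\D^2_\pw(u - \G_h u)\| + \osc(f,\Mcal))$. Finally, $\|u_\Mcal - \J_h u_h\|_{H^s(\Mcal)}$ is treated by the scaling argument at the end of \Cref{thm:Hs}. Here \Cref{lem:smoothing-operator-HHO}(ii), with the jump term controlled as above, gives a factor $h_{\max}^{2-s}$. Since $\delta \le 2-s$, a triangle inequality concludes.
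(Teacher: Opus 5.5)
Your proposal is correct and is essentially the paper's proof. It uses the same duality split and the same identity from testing the discrete problem with $\I_h z$. Your two pieces of the last difference are exactly the paper's $\mathrm{T}_2$ and $\mathrm{T}_1$, each bounded via \Cref{lem:d-consistency-HHO} with the same choices of $v_h$, $\phi$, $w_h$, followed by the same scaling argument for the broken $H^s$ norm.

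One small precision: bounding $h_S^{-1}\|[\nabla_\pw e_\Mcal]_S\cdot\nu_S\|_S^2$ by $\|e_h\|_h^2$ is not achieved by the discrete trace inequality alone. It needs the face-Poincar\'e splitting from the proof of \Cref{lem:smoothing-operator-HHO}: a $\Pi_S^{k-2}$ part absorbed by the stabilization, and a $[\partial_{\nu t} e_\Mcal]_S$ part handled by the discrete trace inequality. For the Hessian term itself, the paper takes a simpler route, using $u_\Mcal - \J_h u_h = (u_\Mcal - u) + (u - \J_h\I_h u) + \J_h e_h$ together with the continuity of $\J_h$.
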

\begin{proof}
	Let $G \in H^{-s}(\Omega)$ with $\|G\|_{H^{-s}(\Omega)} = 1$ and $\|u - \J_h u_h\|_{H^s(\Omega)} = G(u - \J_h u_h)$. Furthermore,
	$z \in V$ denotes the solution to $\Delta^2 z = G$. Recall the split
	\begin{align*}
		\|u - \J_h u_h\|_{H^s(\Omega)} = G(u - \J_h u_h) = a(z - \J_h \I_h z, u - \J_h u_h) + a(\J_h \I_h z, u - \J_h u_h)
	\end{align*}
	from the proof of \Cref{thm:Hs}. 
	Elementary algebra shows
	\begin{align*}
		&a(\J_h \I_h z, \J_h u_h) = (\D^2_\pw(\J_h z - \mathcal{R}_h \I_h z), \D^2 \J_h u_h)_{L^2(\Omega)}\\
		&\qquad + (\D^2_\pw \mathcal{R}_h \I_h z, \D^2_\pw(\J_h u_h - \mathcal{R}_h u_h))_{L^2(\Omega)} + (\D^2_\pw \mathcal{R}_h \I_h z, \D^2_\pw \mathcal{R}_h u_h)_{L^2(\Omega)}.
	\end{align*}
	The two previously displayed formula and $$a(\J_h \I_h z, u) - (\D^2_\pw \mathcal{R}_h \I_h z, \D^2_\pw \mathcal{R}_h u_h)_{L^2(\Omega)} = (f, \J_h \I_h z - \G_h z)_{L^2(\Omega)} + \s_h(\I_h z, u_h)$$ from \eqref{def:cont-problem} and \eqref{def:dis-problem} imply
	\begin{align*}
		\|u - \J_h u_h\|_{H^s(\Omega)} = a(z - \J_h \I_h z, u - \J_h u_h) + (f, \J_h\I_h z - \G_h z)_{L^2(\Omega)} + \s_h(\I_h z, u_h)&\nonumber\\
		- (\D^2_\pw (\J_h \I_h z - \mathcal{R}_h \I_h z), \D^2 \J_h u_h)_{L^2(\Omega)} - (\D^2_\pw \mathcal{R}_h \I_h z, \D^2_\pw(\J_h u_h - \mathcal{R}_h u_h))_{L^2(\Omega)}&.
	\end{align*}
	In the remaining proof, we only focus on bounding the final two terms, denoted by $\mathrm{T}_1$ and $\mathrm{T}_2$,
	on the right-hand side as the first three ones can be handled as in the proof of \Cref{thm:Hs}.
	Consider the split
	\begin{align*}
		\mathrm{T}_1 &= - (\D^2_\pw (\J_h \I_h z - \mathcal{R}_h \I_h z), \D^2_\pw \mathcal{R}_h u_h)_{L^2(\Omega)}\\
		&\qquad + (\D^2_\pw (\J_h \I_h z - \mathcal{R}_h \I_h z), \D^2_\pw(\mathcal{R}_h u_h - \J_h u_h))_{L^2(\Omega)}.
	\end{align*}
	Applying \Cref{lem:d-consistency-HHO} to the first and \Cref{lem:smoothing-operator-HHO} to the second term on the right-hand side as well as \Cref{lem:quasi-opt-rec-HHO}--\ref{lem:quasi-int-HHO} shows
	\begin{align*}
		\mathrm{T}_1 &\lesssim \|\D^2_\pw(\mathcal{R}_h u_h - \J_h u_h)\|\big(|\I_h z|_\s + \|\D^2_\pw(z - \G_h z)\|\big)\\ 
		&\lesssim \|\D^2_\pw(\mathcal{R}_h u_h - \J_h u_h)\|\|\D^2_\pw(z - \G_h z)\|.
	\end{align*}
	The triangle inequality, \Cref{lem:quasi-opt-rec-HHO}--\ref{lem:quasi-int-HHO}, the continuity of $\J_h$, and \Cref{thm:a-priori-HHO} provide
	\begin{align*}
		\|\D^2_\pw(\mathcal{R}_h u_h - \J_h u_h)\| \leq \|e_h\|_h + \|\D^2_\pw(\mathcal{R}_h \I_h u - \J_h \I_h u)\| + \|\D^2(\J_h \I_h u - \J_h u_h)\|&\\
		\lesssim \|e_h\|_h + \|\D^2_\pw(u - \G_h u)\| \lesssim \|\D^2_\pw(u - \G_h u)\| + \osc(f,\Mcal)&.
	\end{align*}
	The combination of the two previously displayed formula with \eqref{ineq:elliptic-regularity} results in
	\begin{align*}
		\mathrm{T}_1 \lesssim h_\mathrm{max}^{\min\{\delta,k-1\}}\big(\|\D^2_\pw(u - \G_h u)\| + \osc(f,\Mcal)\big).
	\end{align*}
	Since $\|\D^2_\pw(u_\Mcal - \J_h u_h)\| \lesssim \|\D^2_\pw(u - u_\Mcal)\| + \|\D^2_\pw(u - \G_h u)\| + \|e_h\|_h$ from the triangle inequality, \Cref{lem:quasi-int-HHO}, and the continuity of $\J_h$, \Cref{lem:d-consistency-HHO} and \Cref{thm:a-priori-HHO} imply
	\begin{align*}
		\mathrm{T}_2 &\lesssim \|\D^2_\pw(\mathcal{R}_h \I_h z - \J_h \I_h z)\| \big(|u_h|_\s + \|\D^2_\pw(u_\Mcal - \J_h u_h)\|\big)\\
		&\lesssim \|\D^2_\pw(\mathcal{R}_h \I_h z - \J_h \I_h z)\|\big(\|\D^2_\pw(u - \G_h u)\| + \osc(f,\Mcal)\big)
	\end{align*}
	\Cref{lem:quasi-opt-rec-HHO}--\ref{lem:quasi-int-HHO} and the elliptic regularity \eqref{ineq:elliptic-regularity} provide $\|\D^2_\pw(\mathcal{R}_h \I_h z - \J_h \I_h z)\| \lesssim h_\mathrm{max}^{\min\{\delta,k-1\}}$ and so,
	\begin{align*}
		\mathrm{T}_2 \lesssim h_\mathrm{max}^{\min\{\delta,k-1\}} \big(\|\D^2_\pw(u - \G_h u)\| + \osc(f,\Mcal)\big).
	\end{align*}
	We omit further details and refer to the proof of \Cref{thm:Hs} for the remaining arguments.
\end{proof}

\subsection{Conclusions}
We derive a~priori error estimates for nonconforming polytopal methods with Lehrenfeld-Sch\"oberl typed stabilizations for the biharmonic equation under minimal regularity assumptions. Our analysis has impact beyond a~priori error control: As a byproduct, we establish that the stabilization is an efficient contribution in a posteriori error estimators. Furthermore, lower-order error estimates allow for a corresponding error analysis of the eigenvalue problem.

\printbibliography
\end{document}